\newdimen\R
\newtheorem{definition}{Definition}
\newtheorem*{definition*}{Definition}
\newtheorem{theorem}{Theorem}
\newtheorem*{theorem*}{Theorem}
\newtheorem{corollary}{Corollary}
\newtheorem*{corollary*}{Corollary}
\newtheorem{lemma}{Lemma}
\newtheorem*{lemma*}{Lemma}
\newtheorem{proposition}{Proposition}
\newtheorem*{proposition*}{Proposition}
\newtheorem{conjecture}{Conjecture}
\newtheorem*{conjecture*}{Conjecture}
\theoremstyle{remark}
\newtheorem{example}{Example}
\newtheorem*{example*}{Example}
\newtheorem{remark}{Remark}
\newtheorem*{remark*}{Remark}
\newcommand*{\polyset}[1]{\text{Poly($#1$)}}
\newcommand*{\pathset}[2]{\mathbf{P}_{#1,#2}}
\newcommand*{\poly}[1]{\mathcal{P}_{#1}}
\newcommand*{\qm}[1]{m_{#1-1,#1+1}}
\newcommand*{\fm}[2]{m_{#1,#2}}
\newcommand*{\Jor}{J{\o}rgensen }
\newcommand*{\rom}[1]{\expandafter\@slowromancap\romannumeral #1@}
\newcommand*{\F}{\mathcal{F}}
\newcommand*{\Z}{\mathbb{Z}}
\newcommand*{\D}{\mathcal{D}}
\title{Periodic Infinite Frieze Patterns of Type $\Lambda_{p_1,\ldots,p_n}$ and Dissections on Annuli}
\author{Esther Banaian and Jiuqi Chen}
\date{}
\begin{document}

\maketitle
\begin{abstract}
    Finite frieze patterns with entries in $\mathbb{Z}[\lambda_{p_1},\ldots,\lambda_{p_s}]$ where $\{p_1,\ldots,p_s\} \subseteq \mathbb{Z}_{\geq 3}$ and $\lambda_p = 2 \cos(\pi/p)$ were shown to have a connection to dissected polygons by Holm and J{\o}rgensen. We extend their work by studying the connection between infinite frieze patterns with such entries and dissections of annuli and once-punctured discs. We give an algorithm to determine whether a frieze pattern with entries in $\mathbb{Z}[\lambda_{p_1},\ldots,\lambda_{p_s}]$, finite or infinite, comes from a dissected surface. We introduce quotient dissections as a realization for some frieze patterns unrealizable by an ordinary dissection. We also introduce two combinatorial interpretations for entries of frieze patterns from dissected surfaces. These interpretations are a generalization of matchings introduced by Broline, Crowe, and Isaacs for finite frieze patterns over $\mathbb{Z}$. 
\end{abstract}
\tableofcontents

\section{Introduction}

Frieze patterns, defined in the finite case by Coxeter \cite{10} and in the infinite case by Tschabold \cite{20}, have a rich connection with the geometry of triangulated and dissected surfaces. For a beautiful survey on frieze patterns and their relation to both geometry and algebra see \cite{16}. The goal of this paper is to delve further into the geometric connections, principally in the case of dissected surfaces.

We define finite and infinite frieze patterns simultaneously.

\begin{definition}
A (finite) frieze pattern $\F$ over a ring $R$ is a (finite) array $(m_{i,j})_{i,j\in \Z, j\geq i}$ of shifted infinite rows such that  
\begin{enumerate}
    \item for all $i,j$, $m_{i,j} \in R$;
    \item $\fm{i}{i} = 0$ and $\fm{i}{i+1} = 1$ for all $i\in \Z$; and
    \item every diamond in $\F$ of the form $\begin{array}{ccc} & \fm{i}{j} & \\ \fm{i-1}{j} & & \fm{i}{j+1} \\ & \fm{i+1}{j+1} & \end{array}$ satisfies the unimodular rule: $\fm{i-1}{j}\fm{i}{j+1}-\fm{i}{j}\fm{i+1}{j+1} = 1$.
\end{enumerate}
\end{definition}

We illustrate the indexing of a frieze pattern. Notice that all entries of the form $m_{i,j}$ for fixed $i$ lie along a SE diagonal while all entries with fixed $j$ lie along a SW diagonal. 

\begin{center}
$\begin{array}{ccccccccccccccccccccccccc}
 \ldots&0&&0&&0&&0&&0&&\ldots\\
 \ldots& &1&&1&&1&&1&&&\ldots\\
 \ldots&m_{\color{red}{-1},\color{black}{1}}&&m_{0,2}&&m_{1,3}&& m_{2,4}&&m_{3,\color{blue}{5}}&&\ldots\\
  \ldots&&m_{\color{red}{-1},\color{black}{2}}&&m_{0,3}&&m_{1,4}&&m_{2,\color{blue}{5}}&&&\ldots\\
 \ldots  & m_{-2,2}&&m_{\color{red}{-1},\color{black}{3}}&&m_{0,4}&&m_{1,\color{blue}{5}}&&m_{2,6}&&\ldots\\
 \ldots &&m_{-2,3}&&m_{\color{red}{-1},\color{black}{4}}&&m_{0,\color{blue}{5}}&&m_{1,6}&&&\ldots\\
 &&&.^{.^.}&&m_{\color{red}{-1},\color{blue}{5}}&&\ddots&&&&&&
 \end{array}$
 \end{center}
 \vspace{0.2cm}
 
  We define several terms related to frieze patterns.
 
 \begin{definition}
 \begin{enumerate}
\item The first nontrivial row is called the \emph{quiddity row}. We will simply refer to this as the first row.  
\item The number of nontrivial rows of a finite frieze pattern is the \emph{width}.
\item If the quiddity row $\ldots, a_0, a_1,\ldots,a_n, a_{n+1},\ldots$  of a frieze pattern is $n$-periodic, so that $a_i = a_{i+n}$ for all $i \in \Z$, we call $(a_1,\ldots,a_n)$ a \emph{quiddity cycle}.
\end{enumerate}
\end{definition}

By the unimodular rule, the quiddity sequence of a frieze pattern determines the entire frieze pattern. Hence a quiddity cycle also determines a frieze pattern.

Soon after Coxeter defined frieze patterns in \cite{11}, Conway and Coxeter showed that finite frieze patterns of positive integers are in bijection with triangulated polygons \cite{10}; see Section \ref{subsec:ConwayCoxeter}. When Caldero and Chapoton demonstrated a connection between cluster algebras and frieze patterns \cite{6}, there was renewed interest in frieze patterns. One resulting generalization was from Baur, Parsons, and Tschabold, who showed infinite frieze patterns of positive integers are in bijection with triangulated annuli and once-punctured discs \cite{1}. See Section \ref{subsec:BPT} for more details.

Holm and J{\o}rgensen generalized Conway and Coxeter's result in a different direction and investigated finite frieze patterns from dissections of polygons \cite{14}. In place of positive integers, they work over the ring of algebraic integers of the field $\mathbb{Q}(\lambda_{p_1},\ldots,\lambda_{p_s})$ where $p_1,\ldots,p_s$ is a list of sizes of subgons in the dissection and $\lambda_p = 2\cos(\pi/p)$. More specifically, their frieze patterns have quiddity cycles where each entry is of the form $\sum_{i=1}^s c_i \lambda_{p_i}$ for $c_i \in \mathbb{Z}_{\geq 0}$. We say that such frieze patterns are of \emph{Type $\Lambda_{p_1,\ldots,p_s}$}. We give more details about this work in Section \ref{subsec:HJ}.

We combine these directions of generalization by investigating infinite frieze patterns of Type $\Lambda_{p_1,\ldots,p_s}$. Some of these frieze patterns correspond to dissections of annuli and once-punctured discs.  In Section \ref{sec:FPType} we discuss some preliminary results about frieze patterns of Type $\Lambda_{p_1,\ldots,p_s}$ while in Section \ref{sec:Dissections} we discuss the details of dissections of annuli and once-punctured discs.

In Section \ref{sec:FPFromDissection}, we build towards determining which infinite frieze patterns of Type $\Lambda_{p_1,\ldots,p_s}$ arise from dissected annuli and once-punctured discs. As a first step, we give an explicit description of some frieze patterns which do not arise in this way (i.e., are ``unrealizable'') in Proposition \ref{proposition:unrealizability1}. When we restrict to the subclass of \emph{skeletal} frieze patterns of Type $\Lambda_{p_1,\ldots,p_s}$, we give a direct characterization of which frieze patterns which are realizable by a dissection of an annulus or once-punctured disc (Proposition \ref{prop:realizability}). See Definition \ref{def:SkeletalDissection} where skeletal dissections are described.

There is a class of skeletal frieze patterns which are neither unrealizable by Proposition \ref{proposition:unrealizability1} nor realizable by Proposition \ref{prop:realizability}. We demonstrate these frieze patterns in the table below. We consider some 2-periodic frieze patterns with quiddity cycle $(a,b)$. All resulting frieze patterns are skeletal. We write \checkmark when the frieze pattern generated by quiddity cycle $(a,b)$ is realizable by Proposition \ref{prop:realizability}, we write $\times$ when the resulting frieze pattern is unrealizable by Proposition \ref{proposition:unrealizability1}, and otherwise we write ?.

\begin{center}
\begin{tabular}{c|c|c|c|c}
$a$ \textbackslash $b$ & 3 & $2 + \sqrt{2}$ & $1 + 2 \sqrt{2}$ & $3\sqrt{2}$ \\\hline
3 & \checkmark & \checkmark & ? & $\times $\\\hline
$2 + \sqrt{2}$ & \checkmark & \checkmark & \checkmark & ?\\\hline
$1 + 2\sqrt{2}$ & ? & \checkmark & \checkmark& \checkmark\\\hline
$3\sqrt{2}$ & $\times$ & ? & \checkmark & \checkmark\\
\end{tabular}
\end{center}

The presence of these frieze patterns between the realizable and unrealizable frieze patterns motivated the definition of \emph{quotient dissections}, which provide a realization for all skeletal frieze patterns which are neither unrealizable by Proposition \ref{proposition:unrealizability1} nor realizable by Proposition \ref{prop:realizability}. See Section \ref{subsec:Dumpling} for details on quotient dissections. Then, Section \ref{subsec:RealizabilityAlgorithm} provides an algorithm to determine whether an arbitrary (not necessarily infinite) frieze pattern of Type $\Lambda_{p_1,\ldots,p_s}$ is realizable by a dissection or a quotient dissection.

In Section 6, we provide two combinatorial interpretations of entries of frieze patterns from dissections of surfaces. These interpretations are sums over sets of assignments of subgons to vertices with two different rules for weighting the matchings. Theorem \ref{thm:LocalWeight} shows that the \emph{local weighting} provides a combinatorial interpretation for the entries of a frieze pattern from a dissection or quotient dissection. We show that summing over the two weightings (local and traditional) always gives the same result in Theorem \ref{thm:EqualWeights}.

Section 7 discusses a third type of weighting, \emph{annulus weighting}, which gives the \emph{growth coefficients} of infinite frieze patterns from dissections. See Theorem \ref{thm:CombinatorialGrowthC}. Growth coefficients are a sequence of numbers from a periodic infinite frieze pattern which measure the growth rate of its entries and were first defined in \cite{2}.

In Section 8, we compare our weighted matchings to weak T-paths recently defined in \cite{8}. We conclude in Section 9 with a discussion about which infinite frieze patterns of Type $\Lambda_{p_1,\ldots,p_s}$ have all positive entries. We cannot guarantee that all frieze patterns from quotient dissections have positive entries, but we conjecture this is true in Conjecture \ref{conj:Positive}.
\section{Background}

\subsection{Finite Frieze Patterns of Positive Integers}\label{subsec:ConwayCoxeter}

The correspondence between triangulated polygons and finite frieze patterns over $\mathbb{Z}_{\geq 0}$ is a model for the way we will connect other surfaces to frieze patterns.
Given a triangulated $n$-gon, label the vertices $v_1,\ldots,v_n$ in counterclockwise order. Suppose that $v_i$ is incident to $a_i$ triangles. Then, we associate the frieze pattern with quiddity cycle $(a_1,\ldots,a_n)$ to the triangulated polygon. See Figure \ref{fig:CCBijection} for an example. Conway and Coxeter prove that this map is a bijection. 

\begin{theorem}[\cite{10}]
Finite frieze patterns of positive integers with width $n$ are in bijection with triangulated $(n+3)$-gons. 
\end{theorem}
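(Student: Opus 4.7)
The plan is to prove both directions of the bijection by induction on the width $n$, with the forward map $T \mapsto \F(T)$ defined as in the theorem statement. The base case $n = 0$ is the triangle with quiddity cycle $(1,1,1)$, which yields the unique width-$0$ frieze pattern.

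First I would check that the construction is well-defined: the quiddity cycle $(a_1,\ldots,a_{n+3})$ coming from a triangulated $(n+3)$-gon produces a valid frieze pattern whose width is exactly $n$. This can be done by giving a direct geometric interpretation of the interior entries---for instance, $m_{i,j}$ counts the number of triangle-to-vertex matchings supported on the sub-polygon with vertices $v_i,\ldots,v_j$, following Conway and Coxeter---and then verifying that the Ptolemy-style recursion satisfied by these counts is precisely the unimodular rule. The ``closing up'' of the pattern (a row of $1$s at height $n+1$ followed by a row of $0$s at height $n+2$) follows either from a direct computation on the boundary edges or from the matrix identity
\[
M(a_1) M(a_2) \cdots M(a_{n+3}) = -I, \qquad M(a) = \begin{pmatrix} a & -1 \\ 1 & 0 \end{pmatrix},
\]
which encodes the desired periodicity of the frieze.

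For surjectivity, and simultaneously to construct the inverse, I would argue inductively on $n$: given a frieze pattern $\F$ of positive integers with quiddity cycle $(a_1,\ldots,a_{n+3})$, the key lemma is that some $a_i = 1$. Granting this, replace $(\ldots,a_{i-1},1,a_{i+1},\ldots)$ by $(\ldots,a_{i-1}-1,a_{i+1}-1,\ldots)$ to obtain a cycle of length $n+2$, and check directly from the unimodular rule that the new cycle yields a valid positive-integer frieze pattern of width $n-1$. By induction this reduced pattern comes from a triangulated $(n+2)$-gon $T'$; re-attaching an ear triangle at position $i$ produces a triangulated $(n+3)$-gon $T$ with $\F(T) = \F$. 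Injectivity follows since the quiddity cycle determines $\F$, and the reduction above is the combinatorial counterpart of ear-clipping in a triangulation, hence invertible.

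The main obstacle, and the place where the positivity hypothesis is essential, is the \emph{ear lemma}: every quiddity cycle of a positive-integer frieze pattern contains an entry equal to $1$. A clean route uses the matrix presentation: if every $a_i \geq 2$, then each factor $M(a_i)$ preserves a suitable open positive cone and has spectral radius strictly greater than $1$, so the product cannot equal $-I$, contradicting the closure identity established above. A more combinatorial alternative exploits the glide symmetry of the pattern: the row immediately above the terminating row of $1$s must itself be a sequence of positive integers constrained by the unimodular rule, so propagating the constraint ``all entries $\geq 2$'' from the quiddity row upward contradicts finite termination. A final subtlety in the inductive step is to ensure that the reduced entries $a_{i-1}-1$ and $a_{i+1}-1$ remain positive; this can fail only when $n+3=3$, which is exactly the base case.
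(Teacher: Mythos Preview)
The paper does not actually prove this theorem: it is quoted as a background result with a citation to Conway and Coxeter, accompanied only by an illustrative figure. So there is no ``paper's own proof'' to compare against.

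That said, your outline is essentially the classical Conway--Coxeter argument and is sound. The forward direction via matchings (or the matrix identity $M(a_1)\cdots M(a_{n+3}) = -I$) is standard, and the inverse via ear-removal induction is exactly the right structure. Your handling of the ear lemma is a bit sketchy---the spectral-radius/cone argument is morally correct but would need the precise statement that for $a\ge 2$ the matrix $M(a)$ maps the cone $\{(x,y): x\ge y\ge 0\}$ strictly into itself---yet this is easily made rigorous. The glide-symmetry alternative you mention would also work. Your final remark that the reduction $a_{i\pm 1}\mapsto a_{i\pm 1}-1$ can only fail in the base case is correct: two adjacent $1$'s in the quiddity row force a $0$ in the very next row by the recurrence $m_{i-1,i+2}=m_{i-1,i+1}m_{i,i+2}-m_{i+1,i+2}=1\cdot 1-1=0$, which is incompatible with width $n\ge 1$.
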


\begin{center}
\begin{figure}\label{fig:CCBijection}
\centering
\raisebox{-.5\totalheight}{\begin{tikzpicture}[scale = 1]
\node[left, blue](A) at (0:1.5\R){};
\node[left, blue](B) at (45:1.5\R){};
\node[below,blue](C) at (90:1.5\R){};
\node[right,blue](D) at (135:1.5\R){};
\node[right,blue](E) at (180:1.5\R){};
\node[above, blue](C) at (-45:1.5\R){};
\node[above, blue](D) at (135+135:1.5\R){};
\node[right,blue](E) at (180+45:1.5\R){};
  \draw (0:1.5\R) node[right]{5} --  (45:1.5\R) node[right]{1} --   (90:1.5\R) node[above]{3}  --   (135:1.5\R) node[above]{1} -- (135 + 45:1.5\R) node[left]{3} -- (180 + 45:1.5\R) node[left]{2}-- (270 :1.5\R) node[below]{2} -- (270 + 45:1.5\R) node[below]{1}--(0:1.5\R);
  \draw(0:1.5\R) -- (90:1.5\R);
  \draw(0:1.5\R) -- (180:1.5\R);  
   \draw(0:1.5\R) -- (270:1.5\R);
   \draw(90:1.5\R) -- (180:1.5\R);
   \draw(0:1.5\R) -- (225:1.5\R);
  \end{tikzpicture}} 
 \scalebox{0.75}{$\begin{array}{cccccccccccccccccccc}
   &&0&&  0&&  0&&  0&&  0&&  0&&  0&&  0&&0&\\
  &&&  1&&  1&&  1&&  1&&  1&&  1&&  1&&1&&1\\
 &&1&&3&&1&&  3&&  2&&  2&&  1&&5&&1&\\
  &&&2&&  2&&  2&& 5&&  3&&  1&&4&&4&&2\\
   &&7&&1&& 3&&  3&& 7&&  1&&3&&3&&7&\\
  &&&3&&1&&  4&&  4&&  2&&2&&2&&5&&3\\
   &&1&&2&&1&&5&&  1&&3&&1&&3&&1&\\
  &&&1&&1&&1&&  1&&1&&1&&1&&1&&1\\
   &&0&&0&&0&&0&&0&&0&&0&&0&&0&\\
 \end{array}$}
 \caption{A triangulated octagon and the corresponding frieze pattern over $\mathbb{Z}_{\geq 0}$ of width 5}
 \end{figure}
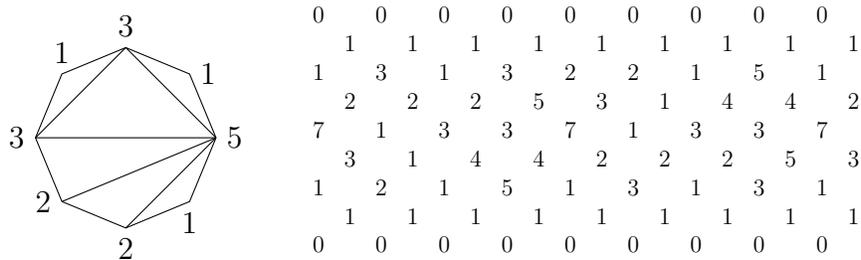
 \end{center}

\subsection{Infinite Frieze Patterns of Positive Integers}\label{subsec:BPT}

Tschabold produced infinite frieze patterns by using the same algorithm as Conway and Coxeter but starting with a triangulated once-punctured disc \cite{20}. Soon after, Tschabold along with Baur and Parsons extended this work by also considering triangulated annuli \cite{1}. With these surfaces, we count adjacent triangles by drawing a small circle around each vertex, then counting the number of triangles this circle passes through. When the triangulation includes a self-folded triangle, the small circle will see the two sides of the triangle as two separate triangles.  When forming a quiddity cycle from a triangulated annulus, one picks one boundary to work with respect to. As a convention, we will work with respect to the outer boundary. However, in Section \ref{subsec:InAndOutFP} we will discuss how the frieze patterns from the outer boundary and inner boundary are related. See Figure \ref{fig:BPTBijection} for an example of a frieze pattern from a triangulated annulus.

It turns out that all periodic infinite frieze patterns of positive integers arise from a triangulation of either a once-punctured disc or annulus. 

\begin{theorem}[\cite{1}]
Periodic infinite frieze patterns of positive integers are in bijection with triangulated once-punctured discs and annuli. 
\end{theorem}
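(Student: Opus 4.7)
The plan is to prove both directions of the bijection, adapting the classical Conway--Coxeter argument to the periodic infinite setting, and distinguishing which surface type corresponds to each frieze pattern. The forward direction is the easier one: starting from a triangulated once-punctured disc or annulus, I would verify that the counting procedure from Section \ref{subsec:BPT} produces a valid periodic infinite frieze pattern of positive integers. Periodicity follows because the triangulation has finitely many arcs (so the quiddity sequence, read around the outer boundary, is $n$-periodic where $n$ is the number of marked points on the outer boundary). The unimodular rule follows from Ptolemy-type relations: each diamond entry $m_{i,j}$ can be interpreted as a (hyperbolic) length-like invariant of an arc between boundary points $v_i$ and $v_j$, and the unimodular identity is exactly the Ptolemy identity for the two ways of triangulating a quadrilateral. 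Positivity is immediate from the fact that the counts are non-negative integers, and non-vanishing follows from the fact that any two distinct marked points are joined by a well-defined arc.

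For the reverse direction, the strategy is induction on an appropriate reduction of the quiddity cycle $(a_1,\ldots,a_n)$. First I would establish the arithmetic constraint $\sum_{i=1}^n a_i = 3n + c$ for some non-negative integer $c$ by computing $\sum a_i$ as a multiple of the growth rate of the frieze; the value $c$ should detect whether the surface is a once-punctured disc ($c=0$) or an annulus with given inner-boundary complexity ($c > 0$). Then, following the Conway--Coxeter approach, I would show that if some $a_i = 1$, the reduction $(a_1,\ldots,a_{i-1}-1, a_{i+1}-1,\ldots,a_n)$ is still the quiddity cycle of a periodic infinite frieze pattern of positive integers, and the reduction corresponds geometrically to removing an ``ear'' triangle from the triangulated surface. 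Thus by induction it suffices to handle the base case where either $n = 1$ or no $a_i$ equals $1$.

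The main obstacle is precisely this base case, because unlike in the Conway--Coxeter setting a periodic infinite frieze pattern need not contain a $1$ in its quiddity cycle. I would handle it by showing that if every $a_i \geq 2$, then the frieze pattern is forced to come from a once-punctured disc whose triangulation consists entirely of arcs with one endpoint on the puncture (so every boundary vertex is incident to exactly $a_i$ such arcs plus the two boundary edges). Equivalently, one must show that the ``all $\geq 2$'' quiddity cycles are exactly the quiddity cycles obtained from fan triangulations of once-punctured discs, which can be verified by direct computation of the entries using the $\U{k}$-type Chebyshev recursion implicit in condition (3) of Definition 1.

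The final step is to verify that the reduction/expansion procedure is compatible on both sides, i.e., that adding an ear triangle to the combinatorial surface corresponds to inserting a $1$ into the quiddity cycle and incrementing neighbors. This establishes that the map from surfaces to frieze patterns is a bijection on isomorphism classes, where two triangulated surfaces are considered equivalent if they differ by an orientation-preserving homeomorphism respecting the distinguished (outer) boundary. I expect all steps except possibly the classification of the ``no $1$s'' base case to be routine; the latter is where the new phenomenon of infinite friezes (versus finite ones) is concentrated.
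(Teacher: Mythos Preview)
First, note that the paper itself does not prove this theorem; it is stated as a background result cited from \cite{1}. So there is no ``paper's own proof'' to compare against, only the original argument of Baur--Parsons--Tschabold (which is, in fact, based on an ear-removal reduction, as reflected in this paper's Section~\ref{subsec:Cut}).

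Your overall strategy---ear reduction to a base case---matches the approach in \cite{1}, but your handling of the base case is wrong. You claim that if every $a_i \geq 2$ then the frieze pattern must come from a once-punctured disc with a fan triangulation. This is false. Skeletal triangulations of an \emph{annulus} (all arcs bridging) also have every $a_i \geq 2$. For a concrete counterexample, the $1$-periodic quiddity cycle $(3)$ generates the infinite frieze $3, 8, 21, 55, \ldots$, which is realized by a triangulated annulus, not a once-punctured disc; indeed, by Proposition~\ref{prop:GrCo2} its growth coefficient is $m_{0,2}-m_{1,1}=3\neq 2$. More generally, a skeletal (wheel) triangulation of $S_n$ always has $a_i = 2$ for every $i$, so the moment any $a_i > 2$ in the base case you must be on an annulus.

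The correct base case therefore splits: if all $a_i = 2$ you get the wheel triangulation of $S_n$; if all $a_i \geq 2$ with at least one $a_i > 2$, you must \emph{construct} a skeletal triangulation of some $A_{n,m}$ realizing the cycle. This construction is the genuine content of the reverse direction and is not a routine Chebyshev computation. Your proposed arithmetic constraint $\sum a_i = 3n + c$ does not by itself produce it, and in any case that formula is not correct as stated (for the wheel on $S_n$ one has $\sum a_i = 2n$, not $3n$).
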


\begin{figure}
    \centering
    \begin{center}
\raisebox{-.4\totalheight}{\begin{tikzpicture}[scale = .9]
\draw (0,0) circle (2\R);
\draw (0,0) circle (\R);
\coordinate(A) at (90:2\R);
\coordinate(B) at (0:\R);
\coordinate(C) at (-60:2\R);
\coordinate(D) at (210:2\R);
\coordinate(E) at (90:\R);
\coordinate(F) at (90:1.5\R);
\coordinate(G) at (270:1.5\R);
\node[above, scale = 0.8] at (A) {4};
\node[right, scale = 0.8] at (C) {4};
\node[left, scale = 0.8] at (200:2\R){1};
\node[circle, fill = black, scale = 0.3] at (200: 2\R){};
\draw(90:2\R) -- (90:\R);
\draw(A) to [out=-20,in=30] (B);
\draw(C) to [out = 80, in = -30](B);
\draw(C) to [out = 180, in = 200, looseness = 1.8](A);
\draw(C) to [out = 180, in = 180, looseness = 1.9](E);
  \end{tikzpicture}} 
 \scalebox{0.8}{$\begin{array}{cccccccccccccccccccc}
  0&&0&&0&&0&&0&&0&&0&&0\\
 &1& &1&&1&&1&&1&&1&&1&\\
 1&&4&&4&&1&&4&&4&&1&&4\\
&3& &15&&3&&3&&15&&3&&3&\\
 8&&11&&11&&8&&11&&11&&8&&11\\
   &29&&8&&29&&29&&8&&29&&29&\\
   105&&21&&21&&105&&21&&21&&105&&21\\
  & &&&&&&\vdots&&&&&\\
 \end{array}$}
 \end{center}
    \caption{A triangulated annulus and the corresponding infinite frieze pattern over $\mathbb{Z}_{\geq 0}$}
    \label{fig:BPTBijection}
\end{figure}
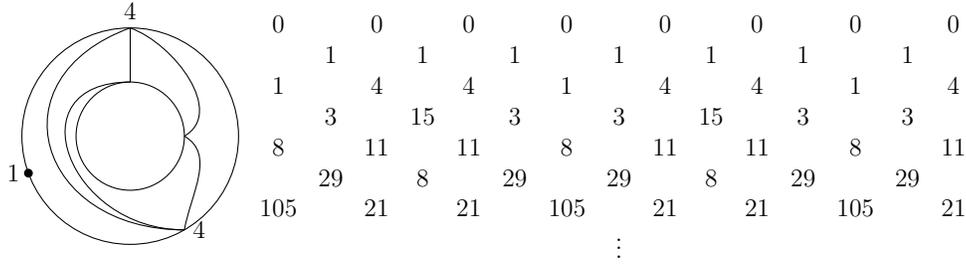

\subsection{Finite Frieze Patterns from Dissected Polygons}\label{subsec:HJ}

Holm and \Jor defined a class of frieze patterns arising from dissected polygons \cite{14}.

\begin{definition}
A \emph{dissection} $\mathcal{D}$ of a surface with marked points $(S,M)$ is a set of non-crossing arcs which divide $S$ into polygons. We refer to these polygons as \emph{subgons}. 
A \emph{$p$-angulation} is a dissection which breaks $(S,M)$ into $p$-gons. 
\end{definition}

Note that a $n$-gon can only be $p$-angulated if $n+2$ is divisible by $p$. 


Given a polygon $P$ with dissection $\mathcal{D}$, let $v_1,\ldots,v_n$ be the vertices of P, labeled in counterclockwise order. Let $A_i$ be the set of subgons in the dissection incident to $v_i$. For any $p \geq 3$, let $\lambda_p = 2\cos(\pi/p)$. Then, we set $m_{i-1,i+1} = \sum_{\mathcal{P} \in A_i} \lambda_{\vert \mathcal{P} \vert}$, where $\vert \mathcal{P} \vert$ is the number of sides of $\mathcal{P}$. For example, in Figure \ref{fig:HJInjection}, if the top left vertex of the hexagon is $v_1$, $A_1 = \{3,3,4\}$. 

\begin{remark}
The number $\lambda_p$ is the ratio of the length of a 1-diagonal (a diagonal which skips 1 vertex) and the length of a side of a regular $p$-gon. The first few values of $\lambda_p$ are familiar numbers.
\begin{center}
    \begin{tabular}{|c|c|}\hline
        $p$ & $\lambda_p$  \\\hline
        3 & 1 \\
        4 & $\sqrt{2}$ \\
        5 & $\frac{1 + \sqrt{5}}{2} = \phi$ \\
        6 & $\sqrt{3}$\\\hline
    \end{tabular}
\end{center}
In particular, since $\lambda_3 = 1$, the Holm and J{\o}rgensen algorithm reduces to that of Conway and Coxeter when the dissection is a triangulation. 

Lemma \ref{lem:ChebyshevGeometric} gives expressions for all diagonals in a regular polygon. 
\end{remark}

\begin{theorem}[\cite{14}]
 Using the above algorithm, there is an injection from the set of dissections of an $(n+3)$-gon to width $n$ frieze patterns. 
\end{theorem}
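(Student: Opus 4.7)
The plan is to prove this in two parts: first, verify the algorithm outputs a valid frieze pattern of width exactly $n$; second, establish injectivity by reconstructing the dissection from its quiddity cycle.

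For the first part, I would induct on the number of diagonals in $\D$. The base case is the empty dissection: the $(n+3)$-gon itself is the unique subgon at every vertex, yielding the constant quiddity cycle $(\lambda_{n+3},\ldots,\lambda_{n+3})$. Under a constant quiddity, the frieze relations collapse to a three-term Chebyshev recurrence whose solution is $m_{i,j} = \sin((j-i)\pi/(n+3))/\sin(\pi/(n+3))$. This vanishes precisely when $j - i = n+3$, placing the bottom zero row at width exactly $n$. For the inductive step, I add a single diagonal $\delta = v_a v_b$ to a dissection $\D'$, splitting a $p$-subgon into a $q$-subgon and an $r$-subgon with $q + r = p + 2$. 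This local change replaces $\lambda_p$ by $\lambda_q + \lambda_r$ only in the quiddity entries at $v_a$ and $v_b$. The delicate step is verifying that the unimodular rule persists throughout the modified pattern; this reduces to a Chebyshev identity among the $U_k$ evaluated at $\lambda_p/2, \lambda_q/2, \lambda_r/2$, which can be checked via product-to-sum cosine formulas. Equivalently, if one interprets $m_{i,j}$ geometrically as a generalized diagonal ``length'' from $v_i$ to $v_{j+2}$ measured through the dissection, the identity becomes a generalized Ptolemy relation.

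For injectivity, suppose two dissections $\D, \D'$ of the same $(n+3)$-gon produce identical frieze patterns. Their quiddity cycles coincide, so $\sum_{\mathcal{P} \in A_i} \lambda_{|\mathcal{P}|} = \sum_{\mathcal{P}' \in A_i'} \lambda_{|\mathcal{P}'|}$ at every vertex $v_i$. The key algebraic fact is that nonnegative integer combinations of the $\lambda_p$'s uniquely determine their coefficients; this follows from the $\mathbb{Q}$-linear independence structure of $\{\lambda_p : p \geq 3\}$ as algebraic numbers together with the bounds $0 < \lambda_p < 2$, which pin down the total count $\sum c_p$ separately. Hence the multisets $A_i$ and $A_i'$ of subgon sizes agree at every vertex. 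The cyclic sequence of these multisets, combined with the constraint that the subgon containing the boundary edge $v_i v_{i+1}$ must appear in both $A_i$ and $A_{i+1}$, then reconstructs the diagonals of $\D$ uniquely, forcing $\D = \D'$.

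The main obstacle is the inductive step for the frieze property: propagating a local quiddity change through every diamond of the pattern and nailing down the correct Chebyshev identity for arbitrary subgon sizes $q, r$ is the subtlest part of the argument. Everything else — the base-case Chebyshev computation, the algebraic recovery of multisets, and the combinatorial reconstruction from adjacency — is routine once that identity is in hand.
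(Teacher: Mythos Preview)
The paper does not actually prove this theorem; it is stated as a background result cited from \cite{14} (Holm and J{\o}rgensen), so there is no in-paper proof to compare against. I will therefore assess your proposal on its own merits.

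Your Part 1 is only a sketch: the inductive step, which you correctly flag as the delicate part, is not carried out. You assert that passing from $\lambda_p$ to $\lambda_q + \lambda_r$ at two vertices preserves the unimodular rule everywhere, but you neither state the required Chebyshev identity precisely nor indicate why a local change at two quiddity entries propagates compatibly through all diamonds below. As written this is an outline, not a proof.

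Your Part 2 has a genuine gap. The reconstruction step claims that knowing the multisets $A_i$ together with the adjacency constraint (the subgon on edge $v_iv_{i+1}$ lies in $A_i \cap A_{i+1}$) forces the dissection uniquely. But this reasoning does not use anything specific to polygons, and it is \emph{false} in the annulus setting: Example~\ref{ex:NonUniqueDissection} in this very paper exhibits two distinct dissections of an annulus with identical multisets $A_1 = \{3,4,4\}$, $A_2 = \{3,3,4,4\}$. Your argument, applied verbatim, would ``prove'' these coincide. What distinguishes the polygon case is the existence of ears: every polygon dissection has a $p$-ear, visible in the quiddity as $p-2$ consecutive entries equal to $\lambda_p$, and injectivity follows by removing that ear and inducting on the number of subgons (this is the mechanism behind the cutting operation in Section~\ref{subsec:Cut}). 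Your proposal omits this entirely. Separately, your claimed ``key algebraic fact'' that nonnegative integer combinations of the $\lambda_p$ determine their coefficients is asserted with only a vague appeal to $\mathbb{Q}$-linear independence and the bound $\lambda_p < 2$; this is not a proof, and in any case the ear-removal argument does not need it.
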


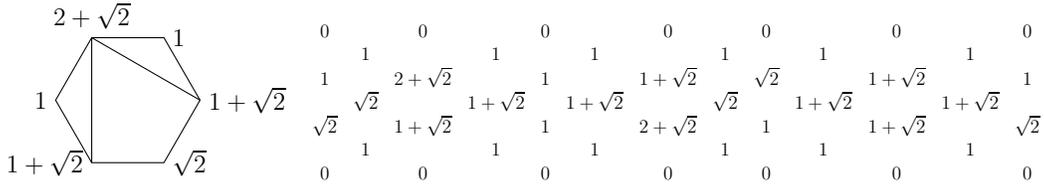
\begin{figure}
    \centering
    \raisebox{-.4\totalheight}{\begin{tikzpicture}[scale = 0.8]
\draw (0:1.5\R) node[right, scale = 0.8]{$1+\sqrt{2}$} --  (60:1.5\R) node[right, scale = 0.8]{$1$} --   (120:1.5\R) node[above, scale = 0.8]{$2+\sqrt{2}$}  --   (180:1.5\R) node[left, scale = 0.8]{$1$} -- (240:1.5\R) node[left, scale = 0.8]{$1 + \sqrt{2}$} -- (300:1.5\R) node[right, scale = 0.8]{$\sqrt{2}$}-- (0:1.5\R);
\draw(0:1.5\R) -- (120:1.5\R);
\draw(120:1.5\R) -- (240:1.5\R);
 \end{tikzpicture}} 
 \scalebox{0.6}{$\begin{array}{cccccccccccccccccccc}
  0&&0&&0&&0&&0&&0&& 0\\
 &1& &1&&1&&1&&1&&1&\\
 1&&2+\sqrt{2}&&1&&1+\sqrt{2}&&\sqrt{2}&&1 + \sqrt{2} && 1\\
&\sqrt{2}& &1 + \sqrt{2}&&1+\sqrt{2}&&\sqrt{2}&&1 + \sqrt{2}& &1 + \sqrt{2}&\\
  \sqrt{2}&&1 + \sqrt{2}&&1&&2+\sqrt{2}&&1&&1+\sqrt{2} && \sqrt{2}\\
 &1& &1&&1&&1&&1&& 1 & \\
  0&&0&&0&&0&&0&&0&&0\\
 \end{array}$}
    \caption{A dissected hexagon and the corresponding frieze pattern of width 3}
    \label{fig:HJInjection}
\end{figure}

 Holm and \Jor do not provide a description of which frieze patterns are in bijection with all dissected polygons. However, when narrowing the focus to $p$-angulations, they have a stronger result.

\begin{definition}
Let $p\in \Z \geq 3$. A frieze pattern is of \emph{Type $\Lambda_p$} if the quiddity row consists of
(positive) integral multiples of $\lambda_p = 2\cos(\frac{\pi}{p})$.
\end{definition}

See Figure \ref{fig:HJBijection} for an example of a frieze pattern of Type $\Lambda_4$ and the corresponding 4-angulation. 

\begin{theorem}[\cite{14}]
There is a bijection between $p$-angulated $(n+3)$-gons and frieze patterns of Type $\Lambda_p$.
\end{theorem}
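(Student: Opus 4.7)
The plan is to leverage the previous theorem, which already gives an injection from dissected $(n+3)$-gons into width-$n$ frieze patterns. The new content is restricting this to $p$-angulations and then identifying the image precisely as frieze patterns of Type $\Lambda_p$. I would organize the proof in two parts: show the map restricts correctly, then show surjectivity by induction.

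First I would handle the easy direction. Given a $p$-angulated $(n+3)$-gon, every subgon in $A_i$ is a $p$-gon, so $m_{i-1,i+1} = \sum_{\mathcal{P}\in A_i}\lambda_p = |A_i|\lambda_p$ is a positive integer multiple of $\lambda_p$. Hence the associated frieze pattern is of Type $\Lambda_p$, and injectivity is inherited from the previous theorem. I would also record the combinatorial count that in a $p$-angulation of an $(n+3)$-gon one must have $(p-2)\mid(n+1)$, giving exactly $k = (n+1)/(p-2)$ subgons and $\sum_i c_i = kp$, where I am writing $c_i \in \mathbb{Z}_{\geq 1}$ for $m_{i-1,i+1}/\lambda_p$.

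For surjectivity I would induct on $n$. Given a quiddity cycle $(c_1\lambda_p,\ldots,c_{n+3}\lambda_p)$ producing a valid frieze pattern of Type $\Lambda_p$ and width $n$, the goal is to locate an ``ear $p$-gon'' and cut it off. Geometrically an ear in a $p$-angulation is a subgon bounded by one diagonal and $p-1$ consecutive boundary edges; its $p-2$ interior boundary vertices each sit in exactly one $p$-gon, so they contribute $c_i = 1$. The key lemma I would aim to prove is that the quiddity cycle contains a run of at least $p-2$ consecutive entries equal to $\lambda_p$. Once this lemma is in hand, I would remove the corresponding $p-2$ vertices $v_{j+1},\ldots,v_{j+p-2}$, reducing the polygon to an $(n+5-p)$-gon with quiddity cycle obtained by decrementing the two flanking entries $c_j$ and $c_{j+p-1}$ by $1$. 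A direct computation with the unimodular rule shows that the smaller quiddity cycle still yields a valid frieze of Type $\Lambda_p$ of width $n+2-p$, and the induction hypothesis provides a $p$-angulation of the reduced polygon; gluing back the ear recovers a $p$-angulation realizing the original frieze.

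The main obstacle is proving the key lemma that $p-2$ consecutive $c_i$'s equal $1$. I would approach this via the standard closure condition for a width-$n$ frieze pattern, namely that the product $\prod_{i=1}^{n+3} M(c_i\lambda_p) = \pm I$, where $M(x) = \bigl(\begin{smallmatrix} x & -1\\ 1 & 0 \end{smallmatrix}\bigr)$. A counting/averaging argument shows $\sum c_i = p(n+1)/(p-2)$, which already forces many $c_i$'s to equal $1$ for $p\geq 3$; to promote ``many'' to ``$p-2$ consecutive,'' I would combine this count with properties of $\lambda_p = 2\cos(\pi/p)$ as a root of a Chebyshev-type polynomial, showing that any run of entries $c_i\geq 2$ produces a continuant with absolute value growing beyond what the closure condition $\pm I$ permits. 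The delicate step here is ruling out the possibility of isolated $1$'s separated by blocks of $\geq 2$'s, and this is where the Chebyshev/continuant bound is essential. Once the lemma is established, the remainder of the argument is a clean induction and a local verification with the unimodular rule.
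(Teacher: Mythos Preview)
This theorem is stated in the paper purely as a background result from \cite{14} and is not proved here, so there is no in-paper argument to compare against. Your overall architecture is the standard one and matches \cite{14}: the easy direction is immediate, and surjectivity proceeds by ear-cutting induction once one knows that every width-$n$ Type $\Lambda_p$ quiddity cycle contains a run of $p-2$ consecutive entries equal to $\lambda_p$. Your handling of the inductive step (cut the run, decrement the two flanking $c$'s, verify the smaller cycle still closes) is correct.

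The gap is in your plan for the key lemma. The identity $\sum_i c_i = p(n+1)/(p-2)$ that you want to feed into an averaging argument is a \emph{geometric} count, valid once you already know the cycle comes from a $p$-angulation; invoking it for an arbitrary Type $\Lambda_p$ cycle that merely happens to close up to a width-$n$ frieze is circular. (Even for $p=3$, the identity $\sum a_i = 3(n+1)$ is customarily a corollary of the Conway--Coxeter bijection, not an input to it.) The closure condition $\prod_i M(c_i\lambda_p)=\pm I$ does not hand you $\sum c_i$ in any direct way, and your ``Chebyshev/continuant bound'' is left too vague to carry the weight on its own. What actually works is a positivity argument along a fixed diagonal: assuming there is no run of $p-2$ consecutive $\lambda_p$'s, use the recurrence $m_{i,j}=m_{j-2,j}\,m_{i,j-1}-m_{i,j-2}$ together with $U_k(\lambda_p)>0$ for $0\le k\le p-2$ to show that the entries $m_{i,j}$ remain strictly positive as $j$ increases, so the frieze never reaches its terminating row of zeros, contradicting finite width. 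Substituting this monotonicity argument for the averaging claim would close your proof.
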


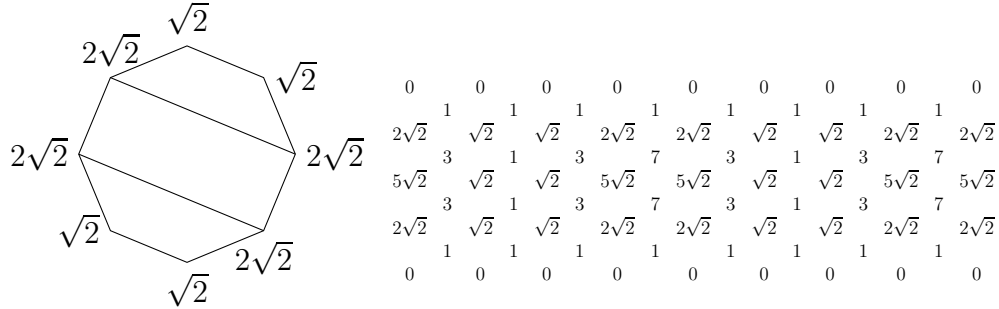
\begin{figure}
    \centering
    \raisebox{-.4\totalheight}{\begin{tikzpicture}[scale = 1.2]
 \draw (0:1.5\R) node[right]{$2\sqrt{2}$} --  (45:1.5\R) node[right]{$\sqrt{2}$} --   (90:1.5\R) node[above]{$\sqrt{2}$}  --   (135:1.5\R) node[above]{$2\sqrt{2}$} -- (135 + 45:1.5\R) node[left]{$2\sqrt{2}$} -- (180 + 45:1.5\R) node[left]{$\sqrt{2}$}-- (270 :1.5\R) node[below]{$\sqrt{2}$} -- (270 + 45:1.5\R) node[below]{$2\sqrt{2}$}--(0:1.5\R);
  \draw (0:1.5\R) -- (135:1.5\R);
  \draw (-45:1.5\R) -- (180:1.5\R);
 \end{tikzpicture}} 
 \scalebox{0.6}{$\begin{array}{cccccccccccccccccc}
0 && 0 && 0 && 0 && 0 && 0 && 0 && 0 && 0 \\
& 1 && 1 && 1 && 1 && 1 && 1 && 1 && 1 &\\
2\sqrt{2} && \sqrt{2} &&  \sqrt{2} &&2 \sqrt{2} && 2\sqrt{2} && \sqrt{2} && \sqrt{2} && 2 \sqrt{2} && 2\sqrt{2} \\
& 3 && 1 && 3 && 7 && 3 && 1 && 3 && 7 &\\
5\sqrt{2} && \sqrt{2} &&  \sqrt{2} &&5 \sqrt{2} && 5\sqrt{2} && \sqrt{2} && \sqrt{2} && 5 \sqrt{2} && 5\sqrt{2} \\
& 3 && 1 && 3 && 7 && 3 && 1 && 3 && 7 &\\
2\sqrt{2} && \sqrt{2} &&  \sqrt{2} &&2 \sqrt{2} && 2\sqrt{2} && \sqrt{2} && \sqrt{2} && 2 \sqrt{2} && 2\sqrt{2} \\
& 1 && 1 && 1 && 1 && 1 && 1 && 1 && 1 &\\
0 && 0 && 0 && 0 && 0 && 0 && 0 && 0 && 0 \\
\end{array}$}
    \caption{A 4-angulated octagon and the corresponding frieze pattern of Type $\Lambda_4$}
    \label{fig:HJBijection}
\end{figure}

\section{Frieze Patterns of Type $\Lambda_{p_1,\ldots,p_s}$ }\label{sec:FPType}

We further study frieze patterns whose quiddity sequence consists of sums of $\lambda_{p}$ for possibly multiple values of $p$. Inspired by Holm and J{\o}rgensen's notation, we introduce a term for shorthand.

\begin{definition}
Let $p_1, \ldots, p_s \in \Z_{\geq 3}$. An infinite frieze pattern is of \emph{Type $\Lambda_{p_1,\ldots,p_s}$} if each entry in its quiddity row is the sum of positive integral multiples of $\lambda_{p_1},\ldots,\lambda_{p_s}$.
\end{definition}

For example, the frieze pattern in Figure \ref{fig:HJInjection} is Type $\Lambda_{3,4}$.
Given a frieze pattern $\mathcal{F}$ of Type $\Lambda_{p_1,\ldots,p_s}$, we can write each entry in the quiddity row in the form of \[
\qm{i} = \sum_{p\in A_i}\lambda_{p}
\]
where $A_i$ is a multiset. For example, given the quiddity cycle $(3, 2+\sqrt{2}, \sqrt{2} + \sqrt{3})$, we have that $A_{3k} = \{3,3,3\}, A_{3k+1} = \{ 3,3,4 \},$ and $A_{3k+2} = \{4,6\}$ for all $k \in \Z$.

We record some useful results about frieze patterns. First, we study 1-periodic frieze patterns. These results are most conveniently stated in terms of Chebyshev polynomials. 

\begin{definition}\label{def:Chebyshev}
Let $U_k(x)$ denote the \emph{normalized Chebyshev polynomials of the second kind}, determined by initial conditions $U_{-1}(x) = 0, U_0(x) = 1$ and the recurrence,\[
U_k(x)= U_1(x)U_{k-1}(x) - U_{k-2}(x)
\]
\end{definition}

For example, $U_1(x) = x$ and $U_2(x) = x^2 - 1$. These normalized Chebyshev polynomials can be obtained by evaluating the ordinary Chebyshev polynomials of the second kind at $\frac{x}{2}$. 

\begin{lemma}\label{lem:OnePeriodicFrieze}
Let $\mathcal{F}$ be the 1-periodic frieze pattern determined by the quiddity cycle $(x)$, where $x$ is an indeterminate. Then, $\mathcal{F}$ is an infinite frieze pattern, with the $k$-th row given by $(U_k(x))$. 
\end{lemma}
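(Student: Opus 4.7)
The plan is to leverage the 1-periodicity of the quiddity cycle to reduce the whole frieze to a single sequence indexed by $d := j - i$, and then identify that sequence with the Chebyshev polynomials via the recurrence extracted from the unimodular rule. Since the quiddity $(x)$ has period one, the entire frieze is invariant under the diagonal shift $(i, j) \mapsto (i+1, j+1)$, so every entry $m_{i,j}$ depends only on $d$. Writing $g(d) := m_{i, i+d}$, the axioms of the frieze pattern together with the quiddity record $g(0) = 0 = U_{-1}(x)$, $g(1) = 1 = U_0(x)$, and $g(2) = x = U_1(x)$, matching the claim for the first three rows.

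Next I would apply the unimodular rule to a diamond whose top sits in the $d$-th row. Under the 1-periodicity the four entries collapse to the three values $g(d), g(d+1), g(d+2)$, and the rule becomes the single relation
\[
g(d+1)^2 - g(d)\, g(d+2) = 1,
\]
equivalently $g(d+2) = (g(d+1)^2 - 1)/g(d)$. Together with the initial data above, this recursion determines the entire sequence $g$, so it suffices to show that the Chebyshev sequence $g(d) := U_{d-1}(x)$ solves the same recursion.

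That last point is exactly the Cassini-type identity
\[
U_k(x)^2 - U_{k-1}(x)\, U_{k+1}(x) = 1 \qquad (k \geq 0),
\]
which I would prove by a short induction from the defining recurrence $U_{k+1} = x U_k - U_{k-1}$: expanding the left-hand side and then reapplying the recurrence to $U_k$ rewrites the expression as $U_{k-1}(x)^2 - U_{k-2}(x)\, U_k(x)$, showing the quantity is independent of $k$ and therefore equal to its value $1$ at $k = 0$. Read as $U_{d-1}(x)\, U_{d+1}(x) = U_d(x)^2 - 1$, this identity is precisely the inductive step needed to propagate $g(d) = U_{d-1}(x)$ along the frieze recursion, so induction on $d$ yields the closed form, and re-indexing (the lemma's $k$-th row corresponds to $d = k+1$) gives exactly $U_k(x)$ in row $k$. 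Finally, $\F$ is genuinely infinite because each $U_k(x)$ is a monic polynomial of degree $k$ in the indeterminate $x$, hence a nonzero element of $\Z[x]$, so no row is identically zero and the pattern does not terminate. The principal obstacle is establishing the Cassini identity; once that is in hand, the rest is a routine unwinding of the 1-periodicity together with the unimodular rule.
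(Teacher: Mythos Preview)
Your argument is correct, but it takes a more circuitous route than the paper. The paper appeals directly to Lemma~\ref{lem:DivisionFreeFrieze}, the \emph{linear} recurrence $m_{i,j} = m_{i,i+2}\,m_{i+1,j} - m_{i+2,j}$. In the $1$-periodic case this collapses immediately to $g(d) = x\,g(d-1) - g(d-2)$, which is verbatim the defining recurrence for $U_{d-1}(x)$; no division and no auxiliary identity is needed. By contrast, you work straight from the quadratic unimodular rule, which forces you to prove the Cassini identity $U_k^2 - U_{k-1}U_{k+1} = 1$ and to keep track of the nonvanishing of $g(d)$ so that the division $g(d+2) = (g(d+1)^2 - 1)/g(d)$ is legitimate at each step. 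Your approach has the virtue of being self-contained from the frieze axioms alone, without invoking the division-free recurrence as a separate lemma; the paper's approach is shorter because that recurrence has already been recorded.
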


This lemma is quickly proven given the following well-known result, which shows that rows of frieze patterns satisfy a recurrence similar to the Chebyshev recurrence. 

\begin{lemma}\label{lem:DivisionFreeFrieze}
Let $\mathcal{F} = \{m_{i,j}\}$ be a frieze pattern. Then, for all $m_{i,j}$ with $\vert j - i \vert \geq 2$,\[
m_{i,j} = m_{i,i+2}m_{i+1,j} - m_{i+2,j}
\]
and dually,\[
m_{i,j} = m_{j-2,j}m_{i,j-1} - m_{i,j-2}.
\]
\end{lemma}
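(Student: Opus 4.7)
The plan is to prove the first identity by induction on the index difference $d := j - i \geq 2$; the dual identity should follow by a parallel argument (an analogous induction moving along the SE-diagonals of fixed $i$ instead of the SW-diagonals of fixed $j$).

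The base case $d = 2$ is immediate: the right-hand side collapses to $\fm{i}{i+2} \cdot \fm{i+1}{i+2} - \fm{i+2}{i+2} = \fm{i}{i+2} \cdot 1 - 0$. For the inductive step, I would introduce the defect
\begin{equation*}
D_{i, j} \;:=\; \fm{i}{i+2}\,\fm{i+1}{j} - \fm{i+2}{j} - \fm{i}{j},
\end{equation*}
so that the target becomes $D_{i, j} = 0$ for every $j \geq i+2$. The crucial step is to establish the multiplicative propagation identity
\begin{equation*}
D_{i, j+1} \cdot \fm{i+1}{j} \;=\; D_{i, j} \cdot \fm{i+1}{j+1},
\end{equation*}
which I would verify by direct expansion: the $\fm{i}{i+2}$-terms on the two sides cancel outright, while the remaining cross-terms reassemble as the left-hand sides of two instances of the unimodular rule, applied to the diamonds with tops $\fm{i}{j}$ and $\fm{i+2}{j}$, contributing $-1$ and $+1$ respectively. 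Combined with the base $D_{i, i+2} = 0$ (immediate, since $\fm{i+2}{i+2} = 0$ and $\fm{i+1}{i+2} = 1$) and the non-vanishing of $\fm{i+1}{j}$ for $j > i+1$, a straightforward induction on $j$ then forces $D_{i, j} = 0$ throughout.

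The main obstacle will be justifying the non-vanishing of the propagating factor $\fm{i+1}{j}$, without which one cannot conclude $D_{i, j+1} = 0$ from $D_{i, j} = 0$. For the frieze patterns of interest in this paper---those with entries in the integral domain $\Z[\lambda_{p_1}, \ldots, \lambda_{p_s}]$, and ultimately with strictly positive entries---this is automatic. In greater generality one can sidestep the issue by treating the quiddity entries as indeterminates, establishing the identity first as a polynomial identity in the ring $\Z[x_{-N}, \ldots, x_N]$, and then specialising.
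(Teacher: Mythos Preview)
The paper does not prove this lemma; it is quoted as a well-known fact without argument, so there is nothing to compare against. Your approach is correct and is a standard route to the division-free recurrence.

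One small slip in the bookkeeping: after the $m_{i,i+2}$-terms cancel, the expansion of $D_{i,j+1}\,m_{i+1,j} - D_{i,j}\,m_{i+1,j+1}$ leaves
\[
\bigl(m_{i,j}\,m_{i+1,j+1} - m_{i,j+1}\,m_{i+1,j}\bigr) \;+\; \bigl(m_{i+2,j}\,m_{i+1,j+1} - m_{i+2,j+1}\,m_{i+1,j}\bigr),
\]
and these are the unimodular relations for the diamonds with tops $m_{i+1,j}$ and $m_{i+2,j}$ (not $m_{i,j}$ and $m_{i+2,j}$), contributing $+1$ and $-1$ respectively rather than $-1$ and $+1$. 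The conclusion $D_{i,j+1}\,m_{i+1,j} = D_{i,j}\,m_{i+1,j+1}$ is unaffected.

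Your handling of the non-vanishing of $m_{i+1,j}$ is the right idea. One refinement: a priori the generic frieze entries live in the \emph{fraction field} $\mathbb{Q}(x_k)$ rather than in the polynomial ring, since the diamond rule itself involves a division. The propagation argument runs in that field, where each $m_{i+1,j}$ with $j>i+1$ is a nonzero rational function (specialise all $x_k$ to $3$, say, to see this); the resulting identity $D_{i,j}=0$ then shows inductively that every $m_{i,j}$ actually lies in $\mathbb{Z}[x_k]$, after which one may specialise to the given ring $R$. This also covers the finite and zero-entry cases that appear later in the paper.
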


Thus, we can study one-periodic frieze patterns by studying evaluations of Chebyshev polynomials. Lemma \ref{lem:ChebyPeriodic} easily follows from knowledge of roots of Chebyshev polynomials.

\begin{lemma}\label{lem:ChebyPeriodic}
Evaluations of Chebyshev polynomials at $\lambda_p$ are periodic, in the sense that  $U_{k+p}(\lambda_p) = -U_{k}(\lambda_p)$. In particular, $U_{p-1}(\lambda_p) = -U_{-1}(\lambda_p) = 0$. 
\end{lemma}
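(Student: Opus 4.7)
The plan is to establish a closed-form trigonometric expression for $U_k$ on the locus $x = 2\cos\theta$, which makes the claimed periodicity an immediate consequence of the periodicity of sine. Specifically, I would prove that for every integer $k \geq -1$ and every $\theta$ with $\sin\theta \neq 0$,
\[
U_k(2\cos\theta) = \frac{\sin((k+1)\theta)}{\sin\theta}.
\]

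I would prove this identity by induction on $k$ using the defining recurrence from Definition \ref{def:Chebyshev}. The base cases $k=-1$ and $k=0$ match the values $U_{-1}(x) = 0$ and $U_0(x) = 1$ directly: $\sin(0)/\sin\theta = 0$ and $\sin(\theta)/\sin(\theta) = 1$. For the inductive step with $x = 2\cos\theta$, the recurrence $U_k(x) = x U_{k-1}(x) - U_{k-2}(x)$ reduces to the trigonometric identity
\[
\sin((k+1)\theta) = 2\cos\theta\,\sin(k\theta) - \sin((k-1)\theta),
\]
which is just the product-to-sum formula $2\cos\theta\sin(k\theta) = \sin((k+1)\theta) + \sin((k-1)\theta)$.

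Now specialize to $\theta = \pi/p$, so that $x = 2\cos(\pi/p) = \lambda_p$ and $\sin(\pi/p) \neq 0$. The first claim then reads
\[
U_{k+p}(\lambda_p) = \frac{\sin\bigl((k+1)\pi/p + \pi\bigr)}{\sin(\pi/p)} = -\frac{\sin((k+1)\pi/p)}{\sin(\pi/p)} = -U_k(\lambda_p),
\]
using $\sin(\alpha + \pi) = -\sin\alpha$. For the second claim, take $k = -1$: then $U_{p-1}(\lambda_p) = -U_{-1}(\lambda_p) = 0$, which also follows directly from the closed form since $\sin(p\cdot \pi/p) = \sin(\pi) = 0$.

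There is no real obstacle here; the only subtlety is making sure the closed form handles the initial index $k = -1$ cleanly (so that the specialization to $U_{p-1}(\lambda_p)$ works without a separate argument) and noting that $\sin(\pi/p) \neq 0$ for $p \geq 3$ so no division by zero occurs. An alternative route avoiding trigonometry would be to work in $\mathbb{Z}[\lambda_p]$ and show that the ``shift by $p$'' operator on the sequence $(U_k(\lambda_p))_k$ satisfies the same linear recurrence as $(-U_k(\lambda_p))_k$ with matching initial conditions $U_{p-1}(\lambda_p) = 0$ and $U_p(\lambda_p) = -1$, but verifying those two initial values essentially requires the same computation and is cleaner through the sine formula.
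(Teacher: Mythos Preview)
Your argument is correct. The paper does not give a proof of this lemma at all; it simply states that the result ``easily follows from knowledge of roots of Chebyshev polynomials.'' Your derivation via the closed form $U_k(2\cos\theta) = \sin((k+1)\theta)/\sin\theta$ is precisely the standard way those roots are computed, so you have supplied exactly the details the paper omits.
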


From Lemma \ref{lem:ChebyPeriodic}, we see for every $n \geq 0$, there is exactly one 1-periodic frieze pattern of width $n$, and it is given by quiddity cycle $(\lambda_{n+3})$. Moreover, this provides us with a necessary criterion for infinite frieze patterns of Type $\Lambda_{p_1,\ldots,p_s}$. We will see a similar statement about unrealizable frieze patterns in Proposition \ref{proposition:unrealizability1}. 

\begin{lemma}\label{lemma:1}
If a sequence of numbers contains more than $p-2$ consecutive entries $\lambda_p$,  then it cannot be a quiddity sequence of an infinite frieze pattern of positive entries.
\end{lemma}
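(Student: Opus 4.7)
The plan is to show that if the quiddity row of an infinite frieze pattern $\mathcal{F}$ contains a block of $p-1$ or more consecutive entries equal to $\lambda_p$, then one of the entries strictly below the quiddity row is forced to be $0$, contradicting strict positivity. To set up notation I would write the quiddity row as $a_i = m_{i-1,i+1}$ and suppose
\[
a_{j+1} \;=\; a_{j+2} \;=\; \cdots \;=\; a_{j+p-1} \;=\; \lambda_p,
\]
then focus on the SW diagonal passing through the column $J = j+p$.

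Define $f_l := m_{j+p-l,\,j+p}$ for $0 \leq l \leq p$. The trivial first two rows of any frieze pattern give the initial data $f_0 = m_{j+p,j+p} = 0$ and $f_1 = m_{j+p-1,j+p} = 1$. For each $l$ with $2 \leq l \leq p$, the index $i := j+p-l$ lies in $\{j,\ldots,j+p-2\}$, so $m_{i,i+2} = a_{i+1}$ is one of the prescribed $\lambda_p$ entries; applying the recurrence of Lemma \ref{lem:DivisionFreeFrieze} with this $i$ and column $j+p$ yields
\[
f_l \;=\; \lambda_p \cdot f_{l-1} \;-\; f_{l-2}, \qquad 2 \leq l \leq p.
\]
This is exactly the defining recurrence for the Chebyshev polynomials of Definition \ref{def:Chebyshev}, and a straightforward induction matching the initial conditions $f_0 = U_{-1}(\lambda_p)$, $f_1 = U_0(\lambda_p)$ gives $f_l = U_{l-1}(\lambda_p)$ for all $0 \leq l \leq p$. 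Equivalently, the triangular block of entries sitting under the run of $\lambda_p$'s is exactly the initial triangle of the $1$-periodic frieze of Lemma \ref{lem:OnePeriodicFrieze} with quiddity cycle $(\lambda_p)$.

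Specializing to $l = p$ and invoking Lemma \ref{lem:ChebyPeriodic} gives
\[
m_{j,\,j+p} \;=\; f_p \;=\; U_{p-1}(\lambda_p) \;=\; 0,
\]
which contradicts the assumption that every entry of $\mathcal{F}$ is positive.

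The only delicate point is verifying that the Chebyshev recurrence remains available for every $l$ from $2$ up to $p$; each application of Lemma \ref{lem:DivisionFreeFrieze} demands that the quiddity entry $m_{j+p-l,\,j+p-l+2}$ be $\lambda_p$, so the run of length $p-1$ in the hypothesis is used exactly once per step and is tight. I expect this index-bookkeeping to be the main (and only) obstacle; once it is set up correctly the conclusion is an immediate combination of Lemmas \ref{lem:DivisionFreeFrieze} and \ref{lem:ChebyPeriodic}, with no further case analysis.
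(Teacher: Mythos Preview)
Your argument is correct and follows essentially the same approach as the paper: both use Lemma~\ref{lem:DivisionFreeFrieze} to propagate the Chebyshev recurrence along the diagonal beneath the run of $\lambda_p$'s and then invoke Lemma~\ref{lem:ChebyPeriodic} to force $m_{j,j+p}=U_{p-1}(\lambda_p)=0$. The paper's proof is simply more terse, leaving the index-tracking and the identification $f_l=U_{l-1}(\lambda_p)$ implicit.
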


\begin{proof}
Suppose we have $m_{0,2} = m_{1,3} = \cdots = m_{p -2,p} = \lambda_p$. . Then, by Lemma \ref{lem:DivisionFreeFrieze} and Lemma \ref{lem:ChebyPeriodic}, $m_{0,p} = 0$. Thus, either this is finite frieze pattern of width $p-3$, from an empty dissection of a $p$-gon, or it is an infinite frieze pattern with non-positive entries.
\end{proof}





\section{Dissections of Once-Punctured Discs and Annuli}\label{sec:Dissections}

Our main goal is to study frieze patterns from dissections of once-punctured discs and annuli. Accordingly, we must describe these surfaces and their dissections more carefully. 

We let $A_{n,m}$ denote an annulus with $n$ vertices on the outer boundary and $m$ vertices on the inner boundary. We require $n, m \geq 1$. We let $S_n$ denote a once-punctured disc with $n \geq 1$ vertices on the boundary. For convenience, we will refer to the boundary of $S_n$ as the outer boundary. As a convention, we will let $v_i$ denote a vertex on the outer boundary of either surface; if working with $A_{n,m}$, $w_j$ will denote a vertex on the inner boundary. 

There are two types of arcs in a dissection of $A_{n,m}$ or $S_n$.

\begin{definition}\label{def:bridgingandperipheralarcs}
Let $\D$ be a dissection of the annulus $A_{n,m}$. An arc connecting an outer vertex and an inner vertex is called \emph{bridging arc}. If $\D$ is instead a dissection of $S_n$, an arc from the boundary to the puncture will be called \emph{bridging}. For either surface, an arc connecting two vertices on the same boundary is called a \emph{peripheral arc}. 
\end{definition}

Arcs in $S_n$ from the boundary to the puncture are instead called ``central'' in \cite{1}.

As a convention, we will assume there are never peripheral arcs which are incident to the inner boundary. Since our frieze patterns are determined by the number of subgons incident to each vertex, we will consider dissections up to Dehn twist.

We also describe several special types of subgons in a dissection. In general, a dissection can have many subgons which are not one of these types. 

\begin{definition}\label{def:typesofsubgons}
\begin{itemize}
    \item A subgon of size $p$ in a dissection with $p-1$ edges on the same boundary is called an \emph{ear}.
    \item A subgon in a dissection which has at least one edge on the outer boundary and at least one vertex on the inner boundary is called an \emph{outer subgon}. 
\end{itemize}
\end{definition}

We will see in Section \ref{sec:FPFromDissection} that working with skeletal dissections is much simpler than working with general dissections. Baur, \c{C}anak\c{c}i, Jacobsen, Kulkarni, and Todorov introduced the notion of a skeletal triangulation in \cite{3}. 

\begin{definition}\label{def:SkeletalDissection}
A dissection of $A_{n,m}$ or $S_n$ is said to be \emph{skeletal} if it contains only bridging arcs.
\end{definition}

Importantly, a skeletal dissection will not contain any ears.

\subsection{Infinite Strips}\label{subsec:InfStrip}

Our matching formulas in Section \ref{sec:Match} for frieze patterns from dissections of $A_{n,m}$ or $S_n$ will require looking at the universal cover of these surfaces.

We describe first the universal cover of a dissected annulus. Let $\D$ be a dissection of $A_{n,m}$. Every dissection of $A_{n,m}$ must contain at least one bridging arc since the dissection divides the surface into polygons. Pick a bridging arc and call it $e$. Call the vertex of $e$ on the outer boundary $v_1$ and the vertex on the inner boundary $w_1$. Label the remainder of the vertices in counterclockwise order: $v_2,\ldots,v_n$ on the outer boundary and $w_2,\ldots,w_m$ on the inner boundary. Cut the annulus along $e$, producing a dissected $(n+m+2)$-gon, with two copies of the edge $e$ and two copies of the vertices $v_1,w_1$. Call this dissected polygon $F$.

In this $(n+m+2)$-gon, one set of vertices $v_1,w_1$ are neighbors to $v_2$ and $w_2$ and the other are neighbors to $v_n$ and $w_m$. Call the vertices $v_1,w_1$ which are next to $v_2,w_2$ $v_1^0$ and $w_1^0$ respectively. For all $2 \leq i \leq n$ and $2 \leq j \leq m$ relabel $v_i$ and $w_j$ as $v_i^0$ and $w_j^0$. Finally, relabel the vertices $v_1,w_1$ which are neighbors to $v_n,w_m$ as $v_1^1$ and $w_1^1$. 

\begin{center}
\begin{tabular}{cc}
\begin{tikzpicture}[scale = 1]
 \draw (0,0) circle (2\R);
\draw (0,0) circle (.5\R);
\node[scale = 0.8, left] at (120:2\R){$v_1$};
\node[scale = 0.8, left] at (180:2\R){$v_2$};
\node[scale = 0.8,left] at (240:2\R){$v_3$};
\node[scale = 0.8, below] at (90:0.5\R) {$w_1$};
\node[scale = 0.8, above] at (270:0.5\R) {$w_2$};
\node[scale = 0.8, right] at (0:0.5\R) {$w_3$};
 \coordinate(A) at (120:2\R);
 \coordinate(B) at (240:2\R);
 \coordinate(C) at (90:.5\R);
 \coordinate(D) at (270:.5\R);
\draw (B) to [out = 100, in = -100] (A);
\draw[very thick] (A)--(C);
\draw(B) -- (D);
\draw(B) to [out = 30, in = 0, looseness = 3] (C);
\node[circle, fill = black, scale = 0.3] at (180:2\R){};
\node[circle, fill = black, scale = 0.3] at (0:0.5\R){};
\node[] at (100:1.4\R){$e$};
  \end{tikzpicture}
  &
  \begin{tikzpicture}[scale = 1.2]
 \draw(1,0) -- (4,0);
 \draw(1,2) -- (4,2);
 \draw[very thick](1,0) -- (1,2);
 \node[] at (.7,1){$e$};
 \draw(3,0) -- (3,2);
 \draw(3,0) -- (4,2);
 \draw[very thick](4,0) -- (4,2);
 \node[] at (4.3,1){$e$};
 \node[circle, fill = black, scale = 0.3] at (3.5,2){};
 \node[circle, fill = black, scale = 0.3] at (2,0){};
 \coordinate(A) at (1,0);
 \coordinate(B) at (3,0);
 \draw (A) to [out = 60, in = 120, looseness = 2] (B);
 \node[below, scale = 0.8] at (1,0) {$v_1^0$};
 \node[below, scale = 0.8] at (2,0) {$v_2^0$};
 \node[below, scale = 0.8] at (3,0) {$v_3^0$};
 \node[below, scale = 0.8] at (4,0) {$v_1^1$};
 \node[above, scale = 0.8] at (1,2) {$w_1^0$};
  \node[above, scale = 0.8] at (3,2) {$w_2^0$};
 \node[above, scale = 0.8] at (3.5,2) {$w_3^0$};
 \node[above, scale = 0.8] at (4,2) {$w_1^1$};
 \node[] at (-1,1) {$\rightarrow$};
  \end{tikzpicture}
  \end{tabular}
  \end{center}

We draw $F$ as a rectangle, with the boundary edges labeled $e$ vertical and all other edges horizontal so that all vertices $v_i^0$ and $v_1^1$ are on the bottom and all vertices $w_j^0$ and $w_1^1$ are on the top. We create a periodic dissection of the infinite strip by translating $F$ infinitely many times horizontally to both the right and the left. We glue these copies of $F$ along the edge $e$. In the copy of $F$ which is $k$ shifts to the right, we label the vertices on the bottom $v_1^k,\ldots,v_n^k,v_1^{k+1}$ and similarly on the top. We label the vertices in the copy of $F$ $k$ shifts to the left with $-k$.

There is a covering map $\rho$ from this dissected infinite strip to $\D$; for all $k \in \mathbb{Z}$, $\rho(v_i^k) = v_i$ and $\rho(w_j^k) = w_j$. This will result in a map on edges: if $g_k$ is an edge from $v_i^k$ to $w_j^{k'}$ (note $k'$ could be $k$ or $k+1$), then $\rho(g_k) = g$. We also extend this map to the subgons in the dissection.

\begin{center}
\begin{tabular}{cc}
\begin{tikzpicture}[scale = 1]
 \draw (0,0) circle (2\R);
\draw (0,0) circle (.5\R);
 \coordinate(A) at (120:2\R);
 \coordinate(B) at (240:2\R);
 \coordinate(C) at (90:.5\R);
 \coordinate(D) at (270:.5\R);
\draw (B) to [out = 100, in = -100] (A);
\draw[very thick] (A)--(C);
\draw(B) -- (D);
\draw(B) to [out = 30, in = 0, looseness = 3] (C);
\node[circle, fill = black, scale = 0.3] at (180:2\R){};
\node[circle, fill = black, scale = 0.3] at (0:0.5\R){};
\node[] at (160:1.7\R){$\alpha$};
\node[] at (200:.9\R){$\beta$};
\node[] at (0:.9\R){$\gamma$};
\node[] at (75:1.5\R){$\delta$};
  \end{tikzpicture}
  &
  \begin{tikzpicture}[scale = 1.2]
 \draw(-1,0) -- (5,0);
 \draw(-1,2) -- (5,2);
 \draw (0,0) -- (1,2);
 \draw[very thick](1,0) -- (1,2);
 \draw(3,0) -- (3,2);
 \draw(3,0) -- (4,2);
 \draw[very thick](4,0) -- (4,2);
 \node[circle, fill = black, scale = 0.3] at (3.5,2){};
 \node[circle, fill = black, scale = 0.3] at (0.5,2){};
 \node[circle, fill = black, scale = 0.3] at (2,0){};
 \draw(0,0) -- (0,2);
 \coordinate(A) at (1,0);
 \coordinate(B) at (3,0);
 \draw (A) to [out = 60, in = 120, looseness = 2] (B);
 \node[below, scale = 0.8] at (0,0) {$v_3^{-1}$};
 \node[below, scale = 0.8] at (1,0) {$v_1^0$};
 \node[below, scale = 0.8] at (2,0) {$v_2^0$};
 \node[below, scale = 0.8] at (3,0) {$v_3^0$};
 \node[below, scale = 0.8] at (4,0) {$v_1^1$};
  \node[above, scale = 0.8] at (1,2) {$w_1^0$};
  \node[above, scale = 0.8] at (3,2) {$w_2^0$};
 \node[above, scale = 0.8] at (3.5,2) {$w_3^0$};
 \node[above, scale = 0.8] at (4,2) {$w_1^1$};
   \node[above, scale = 0.8] at (0.5,2) {$w_2^{-1}$};
\node[above, scale = 0.8] at (0,2) {$w_2^{-1}$};
 \node[] at (2,0.4){$\alpha_0$};
 \node[] at (2,1.6){$\beta_0$};
 \node[] at (0.7,0.7) {$\delta_0$};
 \node[] at (3.7,0.7) {$\delta_1$};
 \node[] at (3.25,1.3) {$\gamma_0$};
 \node[] at (0.3,1.3) {$\gamma_{-1}$};
 \node[] at (-0.5,1) {$\cdots$};
 \node[] at (4.5,1) {$\cdots$};
 \node[] at (-2,1) {$\rightarrow$};
  \end{tikzpicture}
  \end{tabular}
  \end{center}

Next, suppose that $\D$ is a dissection of $S_n$. Label the vertices $v_1,\ldots,v_n$ in counterclockwise order. Draw an infinite strip with vertices $v_i^k$ for all $1 \leq i \leq n$ and all $k \in \mathbb{Z}$ on the lower boundary, in order $\ldots,v_1^i,\ldots,v_n^i,v_1^{i+1},\ldots$ and no vertices on the upper boundary.

Following Lemma 3.6 in \cite{1}, from $\D$ we will construct a dissection of the infinite strip using \emph{asymptotic arcs}. On the infinite strip, an asymptotic arc has one endpoint at a marked point then goes infinitely far to the right or left. Our asymptotic arcs will travel to the right, and we consider that they meet at $+\infty$. 

 For each $i$ such that there is an arc from $v_i$ to the puncture in $\D$ and for all   $k \in \mathbb{Z}$, draw an asymptotic arc with one endpoint at $v_i^k$ which travels infinitely far to the right. Note that we can draw all of these asymptotic arcs so that they do not cross. 

In our dissection of $S_n$, suppose $v_i$ and $v_j$ are such that each are connected to the puncture by an edge in $\D$ and, sweeping counterclockwise from $v_i$ to $v_j$, there are no other arcs to the puncture. Then, these two arcs to the puncture cut out a dissected polygon from $S_n$. Let $k' = k$ if $j > i$ and $k' = k+1$ if $j \leq i$. In the infinite strip, for all $k \in \mathbb{Z}$ we have a polygon of the same size between $v_i^k$ and $v_j^{k'}$, using $+\infty$ as one of the endpoints. We dissect each of these polygons in the infinite strip so that they have the same dissection as the polygon in $S_n$. That is, if there is a peripheral arc between $v_a$ and $v_b$, where $v_a$ and $v_b$ are weakly between $v_i$ and $v_j$ traveling counterclockwise, we connect $v_a^{k_1}$ and $v_b^{k_2}$ between $v_i^k$ and $v_j^{k'}$ where $k_1$ and $k_2$ are $k$ or $k+1$.
If there are at least two bridging arcs in $\D$, we do this between each consecutive pair of bridging arcs. If there is exactly one bridging arc, we do this process once using $v_i = v_j$. 

We define a map $\rho$ from this asymptotic dissection of the infinite strip to $\D$ such that $\rho(v_i^k) = v_i$. If, for some $i$, each $v_i^k$ is incident to an asymptotic arc, $e_k$, then $\rho(e_k) = e$ where $e$ is an arc from $v_i$ to the puncture. Let $g$ be an arc between $v_a^k$ and $v_b^{k'}$, where either $a < b$ and $k' = k$ or $a \geq b$ and $k' = k+1$. In particular, $v_a^k$ appears to the left of $v_b^{k'}$ with our convention of drawing the infinite strip. Then, $\rho(g)$ is the arc between $v_a$ and $v_b$ which follows the boundary in the counterclockwise direction from $v_a$ to $v_b$. The image of $\rho$ on vertices and edges will extend to a map on subgons. A subgon in the infinite strip with a vertex at $+\infty$ will be mapped to a subgon in $S_n$ with a vertex at the puncture.

\begin{center}
\begin{tabular}{cc}
\begin{tikzpicture}[scale = 0.8]
 \draw (0,0) circle (2\R);
\node[circle, fill = black, scale = 0.6] at (180:0\R){};
\node[scale = 0.8, left] at (120:2\R){$v_1$};
\node[scale = 0.8, left] at (180:2\R){$v_2$};
\node[scale = 0.8,left] at (240:2\R){$v_3$};
\draw (120:2\R) to (0:0);
\draw (240:2\R) to (0:0);
\node[] at (180:1.5\R){$\alpha$};
\node[] at (0:1.5\R){$\beta$};
  \end{tikzpicture}
  &
  \begin{tikzpicture}[scale = 1.2]
 \draw(-1,0) -- (5,0);
 \draw(-1,2) -- (5,2);
 \draw (0,0) -- (0,1.5) -- (5,1.5);
 \draw(1,0) -- (1,1.2) -- (5,1.2);
 \draw(3,0) -- (3,0.9) -- (5,0.9);
 \draw(4,0) -- (4,0.6) -- (5,0.6);
 \node[circle, fill = black, scale = 0.3] at (2,0){};
 \node[below, scale = 0.8] at (0,0) {$v_3^{-1}$};
 \node[below, scale = 0.8] at (1,0) {$v_1^0$};
 \node[below, scale = 0.8] at (2,0) {$v_2^0$};
 \node[below, scale = 0.8] at (3,0) {$v_3^0$};
 \node[below, scale = 0.8] at (4,0) {$v_1^1$};
 \node[] at (2,0.8){$\alpha_0$};
 \node[] at (0.5,1){$\beta_{-1}$};
 \node[] at (3.5,0.6){$\beta_0$};
  \node[] at (4.8,0.4){$\alpha_1$};
 \node[] at (-1,1) {$\cdots$};
 \node[] at (5.5,1) {$\cdots$};
  \node[] at (-2,1) {$\rightarrow$};
  \end{tikzpicture}
  \end{tabular}
  
  \end{center}

\section{Frieze Patterns from dissected Once-Punctured Discs and Annuli}\label{sec:FPFromDissection}

For the remainder of this paper, we will study the relationship between periodic infinite frieze patterns and dissections of annuli and once-punctured discs. From here on, all infinite frieze patterns will be assumed to be periodic. One could study non-periodic infinite frieze patterns of Type $\Lambda_{p_1,\ldots,p_s}$ by studying non-periodic dissections of the infinite strip.

 We use Holm and J{\o}rgensen's algorithm to form a quiddity cycle, and thus a frieze pattern, from a dissection of one of these surfaces. To be explicit, given a dissection $\D$ of $S_n$ or $A_{n,m}$ with vertices $v_1,\ldots,v_n$ on the outer boundary, let $\polyset{v_i}$ be the set of subgons incident to $v_i$. As in the triangulation case, we will determine this set by drawing a small half circle around each vertex and looking at the subgons this half circle intersects. This can result in counting a subgon twice, as in the instance of a self-folded triangle, or more generally, a self-folded subgon.

For each subgon $\mathcal{P}$ in $\D$, we will use $|\mathcal{P}|$ denote the number of edges of this subgon. We set $A_i = \{ \vert \mathcal{P} \vert : \mathcal{P} \in \polyset{v_i}\}$, as was defined for the polygon case in Section \ref{subsec:HJ}.
 Then, we form a quiddity cycle $(m_{0,2},\ldots,m_{n-1,n+1})$ by setting $m_{i-1,i+1} = \sum_{p \in A_i} \lambda_p$.

\begin{example}\label{ex:FPfromDissection}
For example, in the dissection of $A_{3,3}$ below, $A_1 = \{3,3,4\}$, $A_2 = \{3\}$, and $A_3 = \{3,3,4,4\}$.

\begin{center}
\begin{tikzpicture}[scale = 1]
 \draw (0,0) circle (2\R);
\draw (0,0) circle (.5\R);
 \coordinate(A) at (120:2\R);
 \node[above, scale = 0.8] at (120:2\R){$v_1$};
 \node[left, scale = 0.8] at (180:2\R){$v_2$};
  \node[below, scale = 0.8] at (240:2\R){$v_3$};
 \coordinate(B) at (240:2\R);
 \coordinate(C) at (90:.5\R);
 \coordinate(D) at (270:.5\R);
\draw (B) to [out = 100, in = -100] (A);
\draw(A)--(C);
\draw(B) -- (D);
\draw(B) to [out = 30, in = 0, looseness = 3] (C);
\node[circle, fill = black, scale = 0.3] at (180:2\R){};
\node[circle, fill = black, scale = 0.3] at (0:0.5\R){};
\node[] at (160:1.7\R){};
\node[] at (200:\R){};
\node[] at (0:.9\R){};
\node[] at (75:1.5\R){};
  \end{tikzpicture}
  \end{center}
  
 We give the first few rows of the frieze pattern from this dissection. This is the frieze pattern with quiddity cycle $(2 + \sqrt{2},1,2 + 2\sqrt{2})$.
 
 \[
 \begin{array}{cccccccccccccccccccc}
  0&&0&&0&&0&&0&&\\
 &1& &1&&1&&1&\\
 1&&2+\sqrt{2}&&2+2\sqrt{2}&&1&&2+\sqrt{2}&&\\
&1+\sqrt{2}& &7 + 6\sqrt{2}&&1+2\sqrt{2}&&1+\sqrt{2}&\\
  4+3\sqrt{2}&&5+4\sqrt{2}&&5+5\sqrt{2}&&4+3\sqrt{2}&&5+4\sqrt{2}&&\\
&21+7\sqrt{2}& &4 + 3\sqrt{2}&&13+9\sqrt{2}&&21+7\sqrt{2}&\\
  & &&&\vdots&&&&&\\
 \end{array}
 \]
\end{example}

Our goal in this section is determining when an infinite frieze pattern of Type $\Lambda_{p_1,\ldots,p_s}$  comes from a dissection of an annulus or once-punctured disc.

\begin{definition}
An infinite frieze pattern of period $n$ is called $\emph{realizable}$ if there exists a dissection $\D$ on an annulus $A_{n,m}$ or a once-punctured disc $S_n$ such that each entry $m_{i-1,i+1}$ in the quiddity sequence is given by
\[m_{i-1,i+1} = \sum_{p\in A_i}\lambda_p\]

where $A_i$ is the multiset of sizes of subgons incident to  $v_i$. If a frieze pattern is not realizable, we call it \emph{unrealizable}.

Similarly, we say a quiddity cycle $(m_{0,2},\ldots,m_{n-1,n+1})$ is \emph{realizable} if the frieze pattern it generates is realizable, and otherwise we call it \emph{unrealizable}.
\end{definition}

\begin{remark}
We will also call any frieze pattern with a negative entry or a zero outside the boundary row(s) of zeroes unrealizable. See Section \ref{sec:Positivity} for more discussion on when the entries of frieze pattern may not be positive. 
\end{remark}

We begin by determining some frieze patterns which are unrealizable.

\begin{proposition}[Realizability test] \label{proposition:unrealizability1}
Let $\{m_{i-1,i+1}\}$ be such that $m_{i-1,i+1} = \sum_{p\in A_i} \lambda_p$ for all $i$. Then, the frieze pattern generated by $\{m_{i-1,i+1}\}$ is unrealizable if at least one of the following is true.
\begin{enumerate}
    \item There exists $i$ such that $A_i\cap A_{i+1} = \emptyset$.
    \item For some $p \in \mathbb{Z}_{\geq 3}$, there exists more than $p-2$ consecutive indices $i$ such that $A_i = \{p\}$, and there also exists some $j$ such that $A_j \neq \{p\}$.
\end{enumerate}
\end{proposition}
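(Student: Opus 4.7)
The plan is to prove the two sufficient conditions for unrealizability separately; both reduce to short contradictions once the right observation is in place.

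For part (1), I would argue by contradiction. Suppose $\D$ is a dissection of $A_{n,m}$ or $S_n$ realizing the given quiddity cycle, and consider the outer boundary edge $e$ joining $v_i$ to $v_{i+1}$. Since a dissection partitions the surface into subgons, $e$ lies on the boundary of exactly one subgon $\mathcal{P}$, which is therefore incident to both $v_i$ and $v_{i+1}$. Hence $\mathcal{P}\in\polyset{v_i}\cap\polyset{v_{i+1}}$, so $|\mathcal{P}|\in A_i\cap A_{i+1}$, contradicting $A_i\cap A_{i+1}=\emptyset$. The only mild subtlety is that $\mathcal{P}$ may be self-folded, but its size $|\mathcal{P}|$ is still by definition an element of both multisets $A_i$ and $A_{i+1}$.

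For part (2), the strategy is to repackage Lemma \ref{lemma:1}. By hypothesis the quiddity cycle contains a run of more than $p-2$ consecutive entries equal to $\lambda_p$, so Lemma \ref{lemma:1} tells us that the generated array is either (a) the finite frieze pattern of width $p-3$ coming from the empty dissection of a $p$-gon, or (b) an infinite frieze pattern with a non-positive entry lying outside the boundary rows of zeros. In case (a), every quiddity entry must equal $\lambda_p$, contradicting the hypothesized index $j$ with $A_j\neq\{p\}$. In case (b) the frieze pattern is unrealizable by the convention stated in the remark following the definition of realizability, which declares any frieze with a misplaced zero or a negative entry to be unrealizable.

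The main obstacle is largely conceptual rather than computational: once one has the observation that each outer-boundary edge is incident to a unique subgon, part (1) is immediate, and part (2) is essentially a bookkeeping consequence of Lemma \ref{lemma:1}. The single delicate point is in part (2), namely separating the legitimate ``empty $p$-gon'' case from the genuinely unrealizable ones, which is precisely where the second hypothesis (existence of some $A_j\neq\{p\}$) enters the argument.
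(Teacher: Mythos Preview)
Your proposal is correct. For part (1) you give precisely the paper's argument, only in more detail: the paper simply observes that adjacent vertices are incident to a common subgon, whereas you spell out why (the shared boundary edge lies in a unique subgon of the dissection).

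For part (2) you take a genuinely different route. The paper argues geometrically: if a realizing dissection had $p-1$ consecutive outer vertices each incident to a single $p$-gon, then (since consecutive vertices share a subgon and each has only one) all $p-1$ of them lie on the \emph{same} $p$-gon $\mathcal{P}$; the two boundary edges flanking this run are then also edges of $\mathcal{P}$, so $\mathcal{P}$ has all $p$ of its edges on the outer boundary, forcing the surface to be a $p$-gon rather than an annulus or punctured disc. You instead invoke Lemma~\ref{lemma:1} (really its proof) to produce the entry $m_{0,p}=0$, and then use the remark declaring friezes with misplaced zeros unrealizable. Both approaches are valid. The paper's is a direct geometric obstruction that does not lean on the ``non-positive $\Rightarrow$ unrealizable'' convention; yours efficiently recycles an already-proved lemma and makes transparent exactly where the hypothesis that some $A_j\neq\{p\}$ enters (to exclude the one legitimate case, the empty $p$-gon). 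One small point: the dichotomy you quote lives in the \emph{proof} of Lemma~\ref{lemma:1} rather than its statement, so you are implicitly appealing to that computation; this is fine, but worth being explicit about.
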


\begin{proof}
In a dissection, each pair of adjacent vertices are incident to a common subgon. Therefore, a quiddity sequence from a dissection must have $A_i \cap A_{i+1} \neq \emptyset$ for all $i$.
Similarly, it is impossible for the quiddity sequence from a realizable frieze pattern to have more than $p-2$ consecutive entries $\lambda_p$ unless the sequence comes from an empty dissection of a $p$-gon. 
\end{proof}

We say that a frieze pattern which does not satisfy either condition of Proposition \ref{proposition:unrealizability1} ``passes the realizability test''.
Recall that by Lemma \ref{lemma:1}, if a quiddity sequence satisfies 2 above, the frieze pattern it generates contains negative entries.

\subsection{Realizability of Skeletal Frieze Patterns}\label{subsec:RealizabilitySkeletal}

In order to describe which frieze patterns are realizable by dissections of $A_{n,m}$ or $S_n$, it will be useful to first restrict to a certain class of frieze patterns.

\begin{definition}
If $\F$ is an infinite frieze pattern with quiddity sequence $\{m_{i-1,i+1}\}$ such that the length any string of consecutive entries $\lambda_p$ in the quiddity sequence is smaller than $p-2$, we call $\F$ a \emph{skeletal frieze pattern}.
\end{definition}

Recall by Definition \ref{def:SkeletalDissection} that a dissection is skeletal if it contains no peripheral  arcs. The following statement follows from the fact that if a dissection has any peripheral arcs, it will have ears. 

\begin{lemma}\label{lem:SkeletalAgree}
A skeletal dissection will always produce a skeletal frieze pattern.
\end{lemma}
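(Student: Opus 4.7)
My plan is to establish the contrapositive: if the frieze pattern $\mathcal{F}$ obtained from a dissection $\D$ of $A_{n,m}$ or $S_n$ is not skeletal, then $\D$ must contain a peripheral arc. Suppose the quiddity sequence of $\mathcal{F}$ has a run of at least $p-2$ consecutive entries equal to $\lambda_p$, say at outer-boundary vertices $v_i, v_{i+1}, \ldots, v_{i+p-3}$. I would first translate this into the combinatorial statement that $A_{i+j}=\{p\}$ for $j=0,1,\ldots,p-3$; equivalently, each of these vertices is incident to exactly one subgon of $\D$, and that subgon has size $p$. Call it $\mathcal{P}_j$.

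The main step is then to identify all the $\mathcal{P}_j$ with a single subgon. I would use the local observation that any two consecutive outer-boundary vertices $v_k,v_{k+1}$ share at least one incident subgon, namely the subgon containing the boundary edge between them. Since each $v_{i+j}$ lies in only one subgon, the shared subgon for the pair $(v_{i+j},v_{i+j+1})$ must be simultaneously $\mathcal{P}_j$ and $\mathcal{P}_{j+1}$; iterating collapses the whole family to a common subgon $\mathcal{P}$ of size $p$.

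The conclusion is then by extraction of an ear. The subgon $\mathcal{P}$ contains the $p-2$ interior vertices $v_i,\ldots,v_{i+p-3}$, and it must also contain $v_{i-1}$ and $v_{i+p-2}$ because the boundary edges $v_{i-1}v_i$ and $v_{i+p-3}v_{i+p-2}$ are edges of $\mathcal{P}$. These are $p$ distinct vertices of $\mathcal{P}$ lying consecutively on the outer boundary, so $\mathcal{P}$ is an ear in the sense of Definition \ref{def:typesofsubgons}, and its unique non-boundary edge $v_{i-1}v_{i+p-2}$ is a peripheral arc, contradicting skeletality of $\D$. The only mildly delicate point is the opening translation from ``entry $=\lambda_p$'' to ``$A_i=\{p\}$''; this rests on the fact that any nonnegative integer combination of $\lambda_{p_i}$'s with total weight at least $2$ is already $\geq 2$ (since each $\lambda_{p_i}\geq 1$) while $\lambda_p<2$, so the multiset $A_i$ is recovered uniquely from the entry.
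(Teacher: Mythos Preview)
Your argument is correct and is precisely the contrapositive spelled out in detail; the paper's own proof is the one-line remark that the statement ``follows from the fact that if a dissection has any peripheral arcs, it will have ears,'' which is the same idea (a run of $p-2$ entries $\lambda_p$ forces an ear, whose unique non-boundary side is a peripheral arc). One small point worth tightening: your claim that $v_{i-1},\dots,v_{i+p-2}$ are $p$ \emph{distinct} outer vertices tacitly assumes $n\geq p$; for smaller $n$ you can either run the identical argument in the universal cover (where consecutive lifts are genuinely distinct and the resulting arc still projects to a peripheral arc in $\D$), or note separately that the degenerate cases force all outer vertices to lie in a single subgon, contradicting that a dissection of $A_{n,m}$ or $S_n$ must contain at least one bridging arc.
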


We can directly characterize realizability of skeletal frieze patterns. 

\begin{proposition}[Skeletal Realizability Criteria]\label{prop:realizability}
Let $\mathcal{F} = \{m_{i,j}\}$ be a skeletal frieze pattern of Type $\Lambda_{p_1,\ldots,p_s}$ with quiddity cycle $(m_{0,2},\ldots,m_{n-1,n+1})$. Let $A_i$ be the multiset of integers, all at least 3, such that $m_{i-1,i+1} = \sum_{p \in A_i} \lambda_p$. Then, $\mathcal{F}$ is realizable by a dissection of $A_{n,m}$ or $S_n$ if and only if 
\begin{enumerate}
    \item[i)] the sequence $\{m_{i-1,i+1}\}$ does not satisfy either condition of Proposition \ref{proposition:unrealizability1}, and 
    \item[ii)] we can choose $p_{i,i+1} \in A_i \cap A_{i+1}$ such that, if $\vert A_i \vert > 1$ and $p_{i-1,i} = p_{i,i+1}$, then $\{p_{i-1,i},p_{i,i+1}\} \subseteq A_i$ as multisets. 
\end{enumerate}
\end{proposition}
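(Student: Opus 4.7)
The plan is to prove both implications separately. For the forward direction ($\Rightarrow$), condition (i) follows immediately from Proposition \ref{proposition:unrealizability1} applied to any realizing dissection $\D$. For (ii), I first argue that $\D$ itself must be skeletal: were there a peripheral arc, one could choose an innermost one, whose ``small'' side contains no further arcs and thus forms an ear of some size $p$; this ear contributes $p-2$ consecutive quiddity entries equal to $\lambda_p$, contradicting the skeletality of $\mathcal{F}$. Given that $\D$ is skeletal, each outer boundary edge $(v_i, v_{i+1})$ lies in a unique subgon $P_{i,i+1}$; set $p_{i,i+1} = |P_{i,i+1}|$, which automatically lies in $A_i \cap A_{i+1}$. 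When $|A_i| > 1$ and $p_{i-1,i} = p_{i,i+1}$, I split into two cases: if $P_{i-1,i}$ and $P_{i,i+1}$ are distinct subgons of $\D$, they contribute two separate copies of this size to $A_i$; if instead $P_{i-1,i} = P_{i,i+1} = S$, then since $|A_i| > 1$ the subgon $S$ must be self-folded at $v_i$ (appearing at both ends of the linear sequence of subgons around $v_i$), so $v_i$ is a vertex of $S$ with multiplicity at least two and $|S|$ appears at least twice in $A_i$.

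For the backward direction ($\Leftarrow$), I would build a skeletal dissection realizing $\mathcal{F}$ directly from the quiddity data. Using conditions (i) and (ii), first order each multiset $A_i$ as a linear sequence $(s_i^{(1)}, \ldots, s_i^{(|A_i|)})$ with $s_i^{(1)} = p_{i-1,i}$ and $s_i^{(|A_i|)} = p_{i,i+1}$; condition (ii) guarantees this is always possible, even when the two endpoints agree. I plan to work in the universal cover (the infinite strip of Section \ref{subsec:InfStrip}): at each outer vertex $v_i^k$, draw $|A_i|-1$ bridging arcs, partitioning the half-disc around $v_i^k$ into $|A_i|$ regions whose prescribed sizes come from the ordered sequence, and identify the leftmost region at $v_i^k$ with the rightmost region at $v_{i-1}^k$ along their shared boundary edge (consistent because both have size $p_{i-1,i}$). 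Each region becomes a subgon: if it contains $l \geq 1$ outer boundary edges and has prescribed size $q$, the remaining $q - l - 2 \geq 0$ edges are filled in as inner boundary edges (the inequality follows from skeletality, which forces $l \leq q-2$), and if the region lies entirely at one vertex between two consecutive arcs, a subgon of size $p$ there requires $p - 2$ inner boundary edges. Assembling these subgons and quotienting by the $\mathbb{Z}$-action yields a dissection of some annulus $A_{n,m}$ realizing $\mathcal{F}$; in the degenerate case where no inner edges appear, one realizes $\mathcal{F}$ instead by a dissection of the once-punctured disc $S_n$.

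The hard part will be verifying that the assembled surface really is $A_{n,m}$ or $S_n$ --- that is, that the inner edges close up consistently into a single cycle (for the annulus) or collapse to a single puncture (for $S_n$), rather than producing multiple boundary components or failing to match up across adjacent regions. Self-folded subgons, which arise in the delicate case of (ii) when $p_{i-1,i} = p_{i,i+1}$ comes from the same geometric subgon at $v_i$, will require extra care, as will choosing between the $A_{n,m}$ and $S_n$ realizations (since subgons with no outer edges force $A_{n,m}$, while the $S_n$ realization requires every outer subgon to have size equal to its outer-edge count plus two).
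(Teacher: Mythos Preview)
Your proposal is correct and follows the same essential strategy as the paper: for ($\Rightarrow$), read off $p_{i,i+1}$ as the size of the subgon containing the boundary edge $(v_i,v_{i+1})$; for ($\Leftarrow$), use the chosen $p_{i,i+1}$ to lay out the outer subgons of a skeletal dissection, then fill in the remaining fan of subgons at each $v_i$ according to the ordered multiset $A_i$.

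There are two presentational differences worth noting. First, in the forward direction you explicitly argue that any realizing dissection $\D$ must itself be skeletal (via an innermost peripheral arc giving an ear and hence $p-2$ consecutive entries $\lambda_p$), and you treat the self-folded case $P_{i-1,i}=P_{i,i+1}$ with $|A_i|>1$ separately; the paper's proof suppresses both points, simply asserting that when $|A_i|>1$ the two outer subgons at $v_i$ are distinct. Your treatment is the more careful one here. Second, for the backward direction you build the dissection in the infinite strip and then quotient by the $\mathbb{Z}$-action, whereas the paper constructs directly on $S_n$ or $A_{n,m}$ after first deciding which surface applies (their case split on whether every $|A_{i_j}|=2$ and every $p_{i_j,i_{j+1}}-2=i_{j+1}-i_j$). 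The paper's approach avoids your anticipated ``hard part'' of checking that the inner edges close up into a single circle: by placing inner vertices $w_{i_j}^\pm$ and the appropriate number of intermediate vertices directly on a given inner boundary circle, consistency is automatic. Your strip construction is equally valid and arguably more uniform; the closing-up verification reduces to the observation that in the $\mathbb{Z}$-periodic strip the upper boundary is a single line, hence its quotient is a single circle (or, when every outer subgon has $q=l+2$ and there are no non-outer subgons, all bridging arcs share their upper endpoint and one obtains $S_n$, matching Lemma~\ref{lem:SkeletalSn}).
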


\begin{proof}
We will first prove that if $\F$ is a skeletal frieze pattern realizable by dissection $\D$, then the quiddity cycle $(m_{0,2},\ldots,m_{n-1,n+1})$ of $\F$ satisfies i and ii. 

Since $\F$ is skeletal, the quiddity sequence will not satisfy part 2 of Proposition \ref{proposition:unrealizability1}.  In any dissection, each pair of consecutive vertices $v_i, v_{i+1}$ will be vertices of a common subgon, $\mathcal{P}_{i,i+1}$. Hence, $\vert \mathcal{P}_{i,i+1}\vert \in A_i \cap A_{i+1}$, and the quiddity cycle does not satisfy 1 in Proposition \ref{proposition:unrealizability1}. Moreover, if $\vert \mathcal{P}_{i-1,i}\vert = \vert \mathcal{P}_{i,i+1}\vert$ and $\vert A_i \vert > 2$, then $\vert \mathcal{P}_{i-1,i}\vert$ will appear with multiplicity at least 2 in $A_i$ since $v_i$ is adjacent to at least two distinct polygons of size $\vert \mathcal{P}_{i-1,i}\vert$. This shows item ii.

For the other direction, given a skeletal frieze pattern $\F$  generated by a quiddity cycle $(m_{0,2},\ldots,m_{n-1,n+1})$ which satisfies the conditions in the statement of the Proposition, we will construct a dissection which is the realization of $\F$. We start by determining the shape of the outer subgons.

Choose a sequence $p_{i,i+1}$, for $1 \leq i \leq n$ which satisfies condition ii. For some $i$, there may be multiple choices of $p_{i,i+1}$; choose one arbitrarily. 
Let $\{i_1,\ldots,i_\ell\} \subseteq \{1,\ldots,n\}$ be the set of indicies such that $\vert A_{i_j}\vert > 1$, with the ordering $i_1 < i_2 < \cdots < i_{\ell - 1} < i_\ell $. Since for any indices $i_j < s < i_{j+1}$, $\vert A_s \vert = 1$, we have that $p_{i_j, i_j+1} = p_{i_j+1,i_j+2} = \cdots = p_{i_{j+1}-1,i_{j+1}}$. We define this number to be $p_{i_j,i_{j+1}}$. Since we work modulo $n$, we similarly define $p_{i_\ell,i_1}.$

First, if all $i_j$ are such that $\vert A_{i_j} \vert = 2$ and for all consecutive pairs $(i_j,i_{j+1})$, modulo $n$, $p_{i_j,i_{j+1}} -2  = i_{j+1} - i_j$, then this frieze pattern will correspond to a dissection of $S_n$.  We label the vertices of $S_n$ $v_1,\ldots,v_n$ in counterclockwise order, and we draw an arc from each $v_{i_j}$ to the puncture.  This dissection of $S_n$ will correspond to the original frieze pattern $\F$. 

Suppose then that there exists $i_j$ such that $\vert A_{i_j} \vert > 2$ or a pair $(i_j, i_{j+1})$ such that $p_{i_j,i_{j+1}} -2  > i_{j+1} - i_j$. Then, $\F$ will correspond to a dissection of $A_{n,m}$, where $m$ will be determined in the following construction. We label the vertices on the outer boundary $v_1,\ldots, v_n$ in counterclockwise order. For each $i_j$, we draw two vertices on the inner boundary, $w_{i_j}^-$ and  $w_{i_j}^+$ where, in counterclockwise order $w_{i_j}^+$ follows $w_{i_j}^-$ and $w_{i_{j+1}}^{\pm}$ both follow $w_{i_{j}}^\pm$. If $\vert A_{i_j} \vert = 2$, we identify $w_{i_j}^-$ and  $w_{i_j}^+$. If $p_{i_j,i_{j+1}} -2  = i_{j+1} - i_j$, then we identify $w_{i_j}^+ $ and $w_{i_{j+1}}^-$. If $p_{i_j,i_{j+1}} -2  > i_{j+1} - i_j$,  we include $p_{i_j,i_{j+1}} -3 -( i_{j+1} - i_j)$ vertices on the inner boundary between $w_{i_j}^+ $ and $w_{i_{j+1}}^-$. Once we have made these identifications and drawn these vertices, we draw arcs from each $v_{i_j}$ to $w_{i_j}^-$ and $w_{i_j}^+$. If $\vert A_{i_j} \vert = 2$, then since $w_{i_j}^-$ and $w_{i_j}^+$ are identified we only draw one arc from $v_{i_j}$.   

At this point, we have established all outer subgons; we add the remaining subgons of the dissection. We do not need to add anything to vertices $v_{i_j}$ such that $\vert A_{i_j} \vert \leq 3$. Let $v_{i_j}$ be a vertex such that $\vert A_{i_j} \vert > 3$. We draw $\vert A_{i_j} \vert - 3$ additional arcs from $v_{i_j}$ to distinct vertices on the inner boundary, between $w_{i_j}^-$ and $w_{i_j}^+$. We add additional vertices between the vertices on the inner boundary connected to $v_{i_j}$ so that $A_{i_j}$ records the sizes of polygons incident to $v_{i_j}$.  
\end{proof}

We used the following fact in the proof of Proposition \ref{prop:realizability}. We record it for use in the Realizability Algorithm.

\begin{lemma}\label{lem:SkeletalSn}
A skeletal frieze pattern $\F$ with quiddity cycle $(m_{0,2},\ldots,m_{n-1,n+1})$ which satisfies Proposition \ref{prop:realizability} is realizable by a dissection of $S_n$ if and only if all $A_i$ are such that $\vert A_i \vert \in \{1,2\}$, and for each $p \in \mathbb{Z}$, any string of entries, $m_{i-1,i+1} = \cdots = m_{i+\ell - 1, i+\ell+1} = \lambda_p$ in the quiddity cycle has length $p-3$. 
\end{lemma}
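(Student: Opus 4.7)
The plan is to translate the two combinatorial conditions directly into structural features of a skeletal dissection of $S_n$, matching vertex incidence counts with subgon sizes.

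For the forward direction, I assume $\F$ is realized by a dissection $\D$ of $S_n$, and the first step will be to argue that $\D$ must itself be skeletal. Any peripheral arc in $S_n$ cuts off a sub-disc $D'$ whose dissection has a tree as its dual graph; using that a non-trivial tree has at least two leaves, the dissection of $D'$ contains a leaf subgon not adjacent to the peripheral arc (or $D'$ is itself an ear, when trivially dissected), so $\D$ contains an ear of some size $p$. Such an ear contributes $p-2$ consecutive vertices with $A_i = \{p\}$, producing a forbidden length-$(p-2)$ run of $\lambda_p$ entries and contradicting skeletality. Hence every arc of $\D$ is bridging, and (up to Dehn twist around the puncture) each $v_i$ is endpoint of at most one bridging arc. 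This forces $\vert A_i\vert = 2$ at bridging endpoints (the vertex sits between its two neighboring subgons) and $\vert A_i\vert = 1$ elsewhere, giving the first condition. For the second, taking cyclically consecutive bridging endpoints $v_{i_j}, v_{i_{j+1}}$, the unique subgon $P$ between them has two bridging edges plus the $i_{j+1}-i_j$ outer boundary edges, so $\vert P\vert = i_{j+1}-i_j+2$, and each of the $\vert P\vert - 3$ intermediate vertices has $A = \{\vert P\vert\}$, yielding exactly the required string of length $p-3$.

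For the reverse direction, I let $\{i_1<\cdots<i_\ell\}$ cyclically enumerate the indices with $\vert A_{i_j}\vert = 2$. Since $\F$ is an infinite frieze pattern (and hence not equal to the finite $1$-periodic pattern $(\lambda_{n+3})$ of width $n$), at least one such index must exist, so $\ell \geq 1$; the case $\ell = 1$ produces a self-folded subgon at the unique bridging endpoint and is handled uniformly. I construct $\D$ by drawing one bridging arc from each $v_{i_j}$ to the puncture, producing one subgon between each cyclically consecutive pair of bridging endpoints, of size $i_{j+1}-i_j+2$. To verify $\D$ realizes $\F$, I observe that the intermediate vertices $v_{i_j+1},\dots,v_{i_{j+1}-1}$ all have $\vert A\vert = 1$ and, by the realizability test together with the shared-element condition at both ends, must all satisfy $A = \{p_{i_j,i_{j+1}}\}$ for the same value $p_{i_j,i_{j+1}}\in A_{i_j}\cap A_{i_{j+1}}$. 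The string of $\lambda_{p_{i_j,i_{j+1}}}$ between these bridging endpoints thus has length $i_{j+1}-i_j-1$, and the length hypothesis forces $p_{i_j,i_{j+1}} = i_{j+1}-i_j+2$, which matches the subgon size produced by the construction.

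The hard part will be the ear argument in the forward direction: carefully showing that any non-skeletal dissection of $S_n$ must produce a length-$(p-2)$ run of $\lambda_p$ entries incompatible with skeletality, including the edge cases where the peripheral arc bounds a trivially-dissected sub-disc and where the only leaf of the sub-dissection is adjacent to the peripheral arc. The remainder is a straightforward bookkeeping exercise pairing cyclic gaps between bridging endpoints to the subgon sizes dictated by the intermediate length-$(p-3)$ strings.
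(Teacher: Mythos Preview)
Your approach is essentially the paper's: the paper does not give a separate proof of this lemma but simply points back to the construction in the proof of Proposition~\ref{prop:realizability}, and your reverse direction is exactly that construction (draw one bridging arc to the puncture from each vertex with $|A_i|=2$ and check that the resulting subgon sizes match). Your forward direction is more explicit than anything the paper writes, and the ear argument showing that any realizing dissection of $S_n$ must itself be skeletal is correct. The one assertion you leave unjustified, that each $v_i$ is the endpoint of at most one bridging arc in a skeletal dissection of $S_n$, is true (two such arcs would bound a bigon, which cannot be a subgon), so this is only a cosmetic omission; the Dehn-twist comment is a red herring but harmless.

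There is one genuine soft spot in your reverse direction. Your argument ``the length hypothesis forces $p_{i_j,i_{j+1}} = i_{j+1}-i_j+2$'' uses the string of $\lambda_p$-entries between $v_{i_j}$ and $v_{i_{j+1}}$, but when $i_{j+1}=i_j+1$ there is no such string and no value of $p$ to which the length hypothesis applies. Concretely, take $n=2$ with $A_1=A_2=\{4,5\}$: this satisfies Proposition~\ref{prop:realizability} (choose $p_{1,2}=4$, $p_{2,1}=5$), has $|A_i|=2$ for all $i$, and the string condition is vacuous, yet your construction produces two triangles and hence $A_1=A_2=\{3,3\}$, which does not realize $\mathcal F$. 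This is really an imprecision in the lemma's \emph{statement} rather than in your strategy: the condition actually used inside Proposition~\ref{prop:realizability} is $p_{i_j,i_{j+1}}-2=i_{j+1}-i_j$ for the chosen $p_{i,i+1}$, which is strictly stronger than the stated string condition when adjacent indices both have $|A|=2$. Your proof is correct once the hypothesis is read in that intended sense; you should flag that your argument in this boundary case relies on the formulation from Proposition~\ref{prop:realizability} rather than on the literal string condition.
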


As an example of Proposition \ref{prop:realizability}, consider the quiddity cycle $(2+\sqrt{2}, \sqrt{2} + \sqrt{3}, \sqrt{3}, 1 + \sqrt{3})$. This quiddity cycle satisfies all conditions of the Proposition. Here, $A_1 = \{3,3,4\}, A_2 = \{4,6\}, A_3 = \{6\},$ and $A_4 = \{3,4\}$. Our subsequence of indices where $A_i$ is larger than a singleton is $1,2,4$. We only have one choice for the sequence of $p_{i,i+1}$: $p_{1,2} = 4, p_{2,3} = p_{3,4} = 6, p_{4,1} = 3$. Thus, we set $p_{2,4} = 6$. Since we have $\vert A_1 \vert > 2$, $p_{1,2} - 2 > 2 - 1$, and $p_{2,4} - 2 > 4 - 2$, this frieze pattern will come from a dissected annulus. We place vertices $v_1,v_2,v_3,v_4$ in counterclockwise order on the outer boundary. 

We start with adding 6 vertices, $w_{1,2}^\pm, w_{2,4}^\pm, w_{4,1}^\pm$ to the inner boundary, with counterclockwise order given by $w_{1,2}^-,w_{1,2}^+,w_{2,4}^-,\ldots$. Since $p_{4,1} -2 = 5-4$, we identify $w_{1,2}^-$ and $w_{4,1}^+$. Since $\vert A_2\vert =2$, we identify $w_{2,4}^-$ and $w_{2,4}^+$; similarly, we identify $w_{4,1}^-$ and $w_{4,1}^+$. Finally, we add $p_{2,4} - 3 - (4-2) = 1$ vertices on the inner boundary between $w_{2,4}^+$ and $w_{4,1}^-$.  Since $p_{1,2} - 3 - (2 - 1) = 0$, we do not add any vertice between $w_{1,2}^+$ and $w_{2,4}^-$. Now, we connect each $v_{i_j}$ to $w_{i_j}^\pm$, with these identifications in place.

At this point, we have established all outer subgons of the dissection. From here, we check whether we need additional arcs from any $v_{i_j}$ so that each $A_i$ correctly records all incident subgons at $v_i$. However, in this case we do not need to add any additional arcs or vertices on the inner boundary. We draw this dissection below. 

\begin{center}
\begin{tikzpicture}[scale = 1.2]
 \draw (0,0) circle (2\R);
\draw (0,0) circle (0.8\R);
 \coordinate(A) at (120:2\R);
 \node[above, scale = 0.8] at (90:2\R){$v_1$};
 \node[left, scale = 0.8] at (180:2\R){$v_2$};
 \node[below, scale = 0.8] at (270:2\R){$v_3$};
 \node[right,  scale = 0.8] at (0:2\R){$v_4$};
  \node[below, scale = 0.8] at (90:0.8\R){$w_{1,2}^+$};
 \node[below, scale = 0.8] at (180:0.8\R){$w_{2,4}^- = w_{2,4}^+$};
 \node[below, scale = 0.8] at (270:2\R){};
 \node[below, scale = 0.8] at (0:2\R){$w_{1,2}^- = w_{4,1}^+ = w_{4,1}^-$};
 \node[circle, fill = black, scale = 0.3] at (270:0.8\R){};
 \draw (90:2\R) -- (90:0.8\R);
 \draw (180:0.8\R) -- (180:2\R);
 \draw(0:0.8\R) -- (0:2\R);
 \coordinate(A) at (90:2\R);
 \coordinate(B) at (0:0.8\R);
\draw (A) to [out = -45, in = 90] (B);
\node[left, blue] at (30:1.6\R){$1$}; 
\node[left, blue] at (65:1.4\R){$1$}; 
\node[left, blue] at (135:1.3\R){$\sqrt{2}$}; 
\node[left, blue] at (280:1.3\R){$\sqrt{3}$}; 
  \end{tikzpicture}
  \end{center}

Given a frieze pattern which satisfies the conditions of Proposition \ref{prop:realizability}, the corresponding dissection is not unique. First, since the quiddity sequence is $n$-periodic for some $n$, one could take a longer quiddity cycle $(m_{0,2},\ldots,m_{kn-1,kn+1})$ for any $k \in \Z_{>0}$. We call the resulting dissection the \emph{kth power} of the dissection from $(m_{0,2},\ldots,m_{n-1,n+1})$, as is described in Definition \ref{def:kthpower}.

There is a more substantial way that these dissections can be non-unique, which only appears in some dissections with mixed sizes of subgons. Given a quiddity cycle $(m_{0,2},\ldots,m_{n-1,n+1})$ there might be several distinct sequences $p_{1,2}, p_{2,3},...,p_{n,n+1}$ that satisfy the conditions of Proposition \ref{prop:realizability}. For instance, see the two dissections with the same quiddity cycle in Example \ref{ex:NonUniqueDissection}. 

\begin{example}\label{ex:NonUniqueDissection}
We show two distinct dissections of annuli, each of which corresponds to the quiddity cycle $(1+2\sqrt{2},2+2\sqrt{2})$.

\begin{center}
\begin{tabular}{c c}
\begin{tikzpicture}[scale = 0.9]
\draw (0,0) circle (2\R);
\draw (0,0) circle (0.5\R);
\coordinate(A) at (90:2\R);
\coordinate(B) at (-90:2\R);
\coordinate(a) at (180:0.5\R);
\coordinate(b) at (45:0.5\R);
\coordinate(c) at (-45:0.5\R);
\coordinate(d) at (-90:0.5\R);
\node[above, black] at (90:2\R){$1+2\sqrt{2}$}; 
\node[below, black] at (-90:2\R){$2+2\sqrt{2}$}; 
\node[circle, fill = black, scale = 0.3] at (90:0.5\R){};
\node[circle, fill = black, scale = 0.3] at (-60:0.5\R){};
\draw(A) to [out = -150, in = 180](a);
\draw(B) to [out = 150, in = 180](a);
\draw(A) to [out = -30, in = 30](b);
\draw(B) to [out = 30, in = -30](c);
\draw(B)--(d);
\end{tikzpicture}

&

\begin{tikzpicture}[scale = 0.9]
\draw (0,0) circle (2\R);
\draw (0,0) circle (0.5\R);
\coordinate(A) at (90:2\R);
\coordinate(B) at (-90:2\R);
\coordinate(a) at (135:0.5\R);
\coordinate(d) at (-135:0.5\R);
\coordinate(b) at (45:0.5\R);
\coordinate(c) at (-45:0.5\R);
\coordinate(e) at (-90:0.5\R);
\node[above, black] at (90:2\R){$1+2\sqrt{2}$}; 
\node[below, black] at (-90:2\R){$2+2\sqrt{2}$}; 

\draw(A) to [out = -150, in = 150](a);
\draw(B) to [out = 150, in = -150](d);
\draw(A) to [out = -30, in = 30](b);
\draw(B) to [out = 30, in = -30](c);
\draw(B)--(e);
\end{tikzpicture} 
\\
     
\end{tabular}
\end{center}
\end{example}

\subsection{Quotient Dissections}\label{subsec:Dumpling}

There are skeletal frieze patterns that pass the realizability test but do not satisfy the requirements in Proposition \ref{prop:realizability}. One example is the frieze pattern generated by the quiddity cycle $(1+\phi,1+\sqrt{2},1+\phi,\sqrt{2} + \phi)$; recall $\lambda_3 = 1, \lambda_4 = \sqrt{2},$ and $\lambda_5 = \phi$. We see that $A_1 = \{3,5\}, A_2 = \{3,4\}$, and $A_3 = \{3,4\}$. Thus, $A_1 \cap A_2 = A_2 \cap A_3 = \{3\}$, but 3 only appears with multiplicity 1 in $A_2$. We introduce another type of geometric construction for such frieze patterns. 

\begin{definition}\label{def:QuotientDissection}

Given $\D$, a dissection of $A_{n,m}$, we can form a \emph{quotient dissection}, $\overline{\D}$, by iteratively identifying pairs of subgons of the same size which share a vertex on the outer boundary. For our purposes, we forbid identifying two subgons which share an edge or identifying subgons which have all vertices on outer boundary. We may end up identifying many subgons of the same size as one depending on which pairs we identify. 
\end{definition}

Given a quotient dissection of $A_{n,m}$, we construct a quiddity cycle in the same way as for a ordinary dissection. That is, for all vertices $v_i$, let $A_{i}$ be the (multi)set of sizes of subgons incident to $v_i$, using the identifications. Then, we form a quiddity sequence $(m_{0,2},\ldots, m_{n-1,n+1})$ by setting $m_{i-1,i+1} = \sum_{p \in A_i} \lambda_p$.

\begin{example}\label{ex:quotientdissection}
 
 Below, we identify the 4-gons labeled $a_1$ and $a_2$. This changes the quiddity cycle from $(2 +2\sqrt{2},3\sqrt{2})$ to $(2 + \sqrt{2}, 2\sqrt{2}$)
 
\begin{center}
\begin{tabular}{ccc}
\begin{tikzpicture}[scale = 0.8]
\draw (0,0) circle (2\R);
\draw (0,0) circle (0.5\R);
\coordinate(A) at (90:2\R);
\coordinate(B) at (-90:2\R);
\coordinate(a) at (-90:0.5\R);
\coordinate(b) at (90:0.5\R);
\coordinate(c) at (45:0.5\R);
\coordinate(d) at (-45:0.5\R);
\coordinate(e) at (135:0.5\R);
\coordinate(f) at (-135:0.5\R);
\node[above] at (90:2\R){$2 + 2\sqrt{2}$};
\node[below] at (-90:2\R){$3\sqrt{2}$};
\node[right, blue] at (180:2\R){$a_1$};
\node[left, blue] at (0:2\R){$a_2$};
\draw(90:2\R) -- (90:0.5\R);
\draw(B) to [out = 150, in = -150](f);
\draw(B) to [out = 30, in = -30](d);
\draw(A) to [out = -30, in = 30](c);
\draw(A) to [out = -150, in = 150](e);

\end{tikzpicture}
&$\rightarrow$&
\begin{tikzpicture}[scale = 0.8]
\draw (0,0) circle (2\R);
\draw (0,0) circle (0.5\R);
\coordinate(A) at (90:2\R);
\coordinate(B) at (-90:2\R);
\coordinate(a) at (-90:0.5\R);
\coordinate(b) at (90:0.5\R);
\coordinate(c) at (45:0.5\R);
\coordinate(d) at (-45:0.5\R);
\coordinate(e) at (135:0.5\R);
\coordinate(f) at (-135:0.5\R);
\node[above] at (90:2\R){$2 + \sqrt{2}$};
\node[below] at (-90:2\R){$2\sqrt{2}$};
\node[right, blue] at (180:2\R){$a$};
\node[left, blue] at (0:2\R){$a$};
\draw(90:2\R) -- (90:0.5\R);
\draw(B) to [out = 150, in = -150](f);
\draw(B) to [out = 30, in = -30](d);
\draw(A) to [out = -30, in = 30](c);
\draw(A) to [out = -150, in = 150](e);
\filldraw[fill = gray, opacity = 0.3]    (A) arc [radius=2\R, start angle=90, delta angle=180]-- (B) to [out = 150, in = -150] (f) -- (f) to [out = 135, in = -135] (e)-- (e) to [out = 150, in = -150](A) ;
\filldraw[fill = gray, opacity = 0.3]  (A) arc [radius=2\R, start angle=90, delta angle=-180]-- (B) to [out = 30, in = -30] (d) -- (d) to [out = 45, in = -45] (c)-- (c) to [out = 30, in = -30](A);

\end{tikzpicture}
\end{tabular}
\end{center}
\end{example}

\begin{remark}
\begin{enumerate}
\item We do not allow identifying subgons which share an edge to avoid constructing quotient dissections which realize quiddity cycles that do not pass the realizability test. In particular, we could realize a quiddity cycle with more than $p-2$ consecutive entries $\lambda_p$ by gluing many $p$-gons together along shared edges. See Figure \ref{fig:QuotientAlongEdge}.
\item We also do not allow identifying subgons with all vertices on one boundary to avoid constructing realizations of unrealizable frieze patterns. As an example, see Figure \ref{fig:QuotientByEars} where we identify two ears to produce the quiddity cycle  $(2\sqrt{2},\sqrt{2},1,1+2\sqrt{2})$. This quiddity cycle is unrealizable, which is verified by the Realizability Algorithm in Section \ref{subsec:RealizabilityAlgorithm}. 
\item Due to these restrictions, we will never form quotient dissections starting with a dissection of $S_n$. In a once-punctured disc, given two subgons which share a vertex in $S_n$ either these subgons share an edge or at least one subgon has all vertices on the outer boundary. 
\end{enumerate}
\end{remark}

\begin{figure}
    \centering
\begin{tikzpicture}[scale = 1]

\draw (0,0) circle (2\R);
\draw (0,0) circle (0.5\R);
\coordinate(A) at (90:2\R);
\coordinate(B) at (90:0.5\R);
\coordinate(D) at (0:2\R);
\node[circle, fill = black, scale = 0.3] at (0:2\R){};
\coordinate(C) at (-60:2\R);
\coordinate(E) at (90:0.5\R);
\coordinate(F) at (-60:0.5\R);
\coordinate(G) at (0:0.5\R){};
\coordinate(H) at (240:2\R){};
\coordinate(I) at (240:0.5\R);
\coordinate(J) at (150:2\R){};
\coordinate(K) at (150:0.5\R){};
\node[above] at (90:2\R) {$2\sqrt{2}$};
\node[below] at (240:2\R) {$\sqrt{2}$};
\node[below] at (-60:2\R) {$\sqrt{2}$};
\node[left] at (150:2\R){$2\sqrt{2}$};
\node[right] at (0:2\R) {$\sqrt{2}$};
\draw(90:2\R) -- (90:0.5\R);
\draw(J) -- (K);
\draw(H) -- (I);
\draw(C) -- (F);
\draw(0:2\R) -- (0:0.5\R);
\filldraw[fill = gray, opacity = 0.3]    (H) arc[radius=2\R, start angle = 240, delta angle = -90] -- (J) -- (K) arc[radius = 0.5\R, start angle = 150, delta angle = 90] -- (I) -- (H);
\filldraw[fill = gray, opacity = 0.3]    (C) arc[radius=2\R, start angle = -60, delta angle = 60] -- (D) -- (G) arc[radius = 0.5\R, start angle = 0, delta angle = -60] -- (F) -- (C);
\filldraw[fill = gray, opacity = 0.3]    (C) arc[radius=2\R, start angle = -60, delta angle = -60] -- (H) -- (I) arc[radius = 0.5\R, start angle = 240, delta angle = 60] -- (F) -- (C);
\filldraw[fill = gray, opacity = 0.3]    (A) arc[radius=2\R, start angle = 90, delta angle = -90] -- (D) -- (G) arc[radius = 0.5\R, start angle = 0, delta angle = 90] -- (B) -- (A);
\end{tikzpicture}
    \caption{If we identify three consecutive pairs of 4-gons, which pairwise share edges, the resulting quiddity cycle does not pass the realizability test. We forbid quotient dissections which glue two subgons that share an edge.}
    \label{fig:QuotientAlongEdge}
\end{figure}
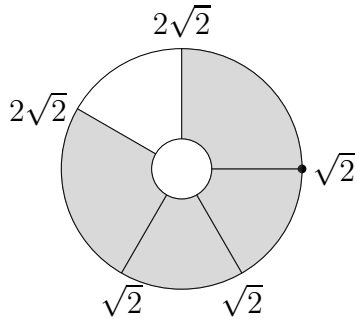

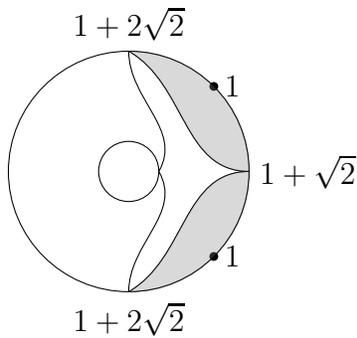
\begin{figure}
    \centering
    \begin{tikzpicture}[scale = 1]
\draw (0,0) circle (2\R);
\draw (0,0) circle (0.5\R);
\coordinate(A) at (90:2\R);
\node[circle, fill = black, scale = 0.3] at (45:2\R){};
\node[circle, fill = black, scale = 0.3] at (-45:2\R){};
\coordinate(B) at (0:2\R);
\coordinate(C) at (-90:2\R);
\coordinate(D) at (0:0.5\R);
\node[above] at (90:2\R) {$1 + 2\sqrt{2}$};
\node[below] at (270:2\R) {$1 + 2\sqrt{2}$};
\node[right] at (0:2\R) {$1 + \sqrt{2}$};
\node[right] at (45:2\R) {$1$};
\node[right] at (-45:2\R) {$1$};
\draw(A) to [out = -90, in = 60, looseness = 1](D);
\draw(C) to [out = 90, in = -60, looseness = 1](D);
\draw(A) to [out = -30, in = 180] (B);
\draw(C) to [out = 30, in = 180] (B);
\filldraw[fill = gray, opacity = 0.3]    (A) to [out = -30, in = 180] (B) arc[radius=2\R, start angle = 0, delta angle = 90] -- (A);
\filldraw[fill = gray, opacity = 0.3]    (C) to [out = 30, in = 180] (B) arc[radius=2\R, start angle = 0, delta angle = -90] -- (C);
\end{tikzpicture}
    \caption{A dissection where we identify two ears. The resulting quiddity sequence is a $3$-gluing to the unrealizable sequence $(2\sqrt{2}, \sqrt{2}, 1, 1 + 2\sqrt{2})$. We forbid such dissections.}
    \label{fig:QuotientByEars}
\end{figure}


\begin{definition}\label{def:DumplingFrieze}
We say that a frieze pattern $\F = \{m_{i,j}\}$ has a \emph{quotient dissection realization} if there exists some quotient dissection $\overline{\D}$ on $A_{n,m}$ such that, if $A_i$ is the (multi)set of sizes of subgons incident to $v_i$, for all $m_{i-1,i+1}$ we have
\[m_{i-1,i+1} =  \sum_{p\in A_i}\lambda_p.\]
\end{definition}

For example, in Example \ref{ex:quotientdissection} we see that the frieze pattern generated by $(2 + \sqrt{2},2\sqrt{2})$ has a quotient dissection realization. There are skeletal frieze patterns which satisfy Proposition \ref{prop:realizability} and also can be realized by a quotient dissection. We will focus on frieze patterns which have a quotient dissection realization and are not realizable by an ordinary dissection. 

We now show that quotient dissections are the missing piece in the sense that the realizability test (Proposition \ref{proposition:unrealizability1}) is sufficient to determine if a skeletal frieze pattern is realizable by a dissection or quotient dissection.

\begin{theorem}\label{thm:QuotientDissections} 
Every skeletal frieze pattern which passes the realizability test (Proposition \ref{proposition:unrealizability1}) is realizable by a dissection of a once-punctured disc or annulus, or a quotient dissection of an annulus. 
\end{theorem}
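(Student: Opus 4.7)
The plan is to reduce to Proposition \ref{prop:realizability}. If the skeletal frieze pattern $\F$ with multisets $\{A_i\}$ already satisfies condition (ii) of that proposition, then $\F$ is realized by an ordinary dissection of $A_{n,m}$ or $S_n$ and there is nothing more to prove. In the remaining case, fix any choice of $p_{i,i+1}\in A_i\cap A_{i+1}$, which exists because $\F$ passes the realizability test, and call an index $i$ \emph{problematic} when $|A_i|>1$, $p_{i-1,i}=p_{i,i+1}=:q_i$, and $q_i$ has multiplicity one in $A_i$. The strategy is then to add one duplicate copy of $q_i$ to $A_i$ for every problematic $i$, realize the enlarged family by an honest skeletal dissection, and afterwards collapse the added material by quotient identifications.

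Concretely, set $A_i'=A_i\cup\{q_i\}$ (as multisets) for problematic $i$ and $A_i'=A_i$ otherwise, and let $\F'$ be the corresponding frieze pattern. With the same $p_{i,i+1}$, I will verify that $\F'$ satisfies both conditions of Proposition \ref{prop:realizability}. Condition (i) is preserved since $A_i\subseteq A_i'$ keeps all intersections nonempty and no singleton $A_j=\{p\}$ is altered (problematic indices have $|A_i|>1$), so the runs regulated by Proposition \ref{proposition:unrealizability1}(2) are unchanged; condition (ii) now holds at every index, because at each problematic $i$ the value $q_i$ appears twice in $A_i'$, while at non-problematic indices the condition was already in force. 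Proposition \ref{prop:realizability} therefore produces a skeletal dissection $\D'$ realizing $\F'$, and since some $|A_i'|\geq 3$ whenever a problematic index exists, Lemma \ref{lem:SkeletalSn} ensures $\D'$ lives on an annulus.

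To finish, I form $\overline{\D'}$ by identifying, at every problematic vertex $v_i$, the two $q_i$-gons of $\D'$ incident to $v_i$ that are shared respectively with $v_{i-1}$ and $v_{i+1}$. Since $|A_i'|\geq 3$, these two subgons sit at the two extremes of the fan at $v_i$, separated by at least one intermediate subgon, so they share no edge; both have an inner vertex because $\D'$ is skeletal, so neither has all its vertices on one boundary; and their only common outer vertex is $v_i$ itself. Hence the identification is legal in the sense of Definition \ref{def:QuotientDissection} and removes exactly one copy of $q_i$ from $A_i'$ without altering any other $A_j'$. Carrying this out at every problematic index converts $\{A_i'\}$ back to $\{A_i\}$, so $\overline{\D'}$ realizes $\F$. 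The main obstacle is the bookkeeping when problematic indices appear consecutively: the forced equalities $q_i=p_{i,i+1}=q_{i+1}$ cause a single subgon of $\D'$ to enter several pair identifications, producing a chain of identified $q$-gons, and I must verify that each successive pairwise identification remains legal at the moment it is performed and that the cumulative effect still subtracts exactly one $q_i$ from each problematic $A_i'$.
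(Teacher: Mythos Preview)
Your approach is essentially the paper's: add a duplicate $q_i$ to each problematic $A_i$, realize the enlarged data as a skeletal dissection of an annulus via Proposition \ref{prop:realizability}, and then identify the two outer $q_i$-gons at each problematic vertex to obtain a quotient dissection with the original quiddity. The chain-of-identifications issue you flag is treated with the same brevity in the paper, which simply performs all the pairwise identifications and asserts the resulting quiddity is correct; the only cosmetic difference is that the paper singles out a special sub-case (exactly one non-problematic index with $|A_i|>1$) in which it adds $\lambda_p$ uniformly to every entry rather than only at problematic indices, whereas your more uniform construction avoids this case distinction.
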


\begin{proof}
Let $\F = \{m_{i,j}\}$ be a frieze pattern which does not satisfy either condition of Proposition \ref{proposition:unrealizability1}. If moreover, $\F$ satisfies Proposition  \ref{prop:realizability}, then $\F$ is realized by a dissection of $S_n$ or $A_{n,m}$. 

Now, suppose that $\F$ does not satisfy item ii in Proposition  \ref{prop:realizability}. Then, it is not possible to pick a sequence $p_{1,2},p_{2,3},\ldots,p_{1,n}$ such that $p_{i-1,i} \in A_i$, $p_{i,i+1} \in A_i$, and if $p_{i-1,i} = p_{i,i+1}$, then this number appears with at least multiplicity two in $A_i$. 

However, since $\mathcal{F}$ passes the realizability test, we know that for all $i$, $A_i \cap A_{i+1} \neq \emptyset$.
Therefore, it is possible to pick a sequence $p_{i,i+1} \in A_i \cap A_{i+1}$. Let $j_1,\ldots,j_\ell$ be the subsequence of $[n]$ where $p_{j_i}:= p_{j_i -1,j_i} = p_{j_i,j_i+1}$, but this number appears only with multiplicity 1 in $A_{j_i}$. We know that for all choices of $p_{i,i+1}$ this set of indices must be nonempty; otherwise, this frieze pattern would be realizable by a dissection of $A_{n,m}$ or $S_n$ already. 

Let $(m_{0,2},\ldots,m_{n-1,n+1})$ be the quiddity cycle for $\F$. We introduce a modified quiddity cycle, $(\widehat{m}_{0,2},\ldots,\widehat{m}_{n-1,n+1})$. If there exists either at least two indices $i$ or no indices $i$ such that $\vert A_i \vert > 1$ and $i \notin \{j_1,\ldots,j_\ell\}$, define $\widehat{m}_{i-1,i+1}$ for $1 \leq i \leq n$ as 
\[
\widehat{m}_{i-1,i+1} = \begin{cases} m_{i-1,i+1} & i \notin \{j_1,\ldots,j_\ell\} \\ m_{i-1,i+1} + \lambda_{p_{j_k}} & i = j_k
 \\ \end{cases}
\]

If there exists exactly one index $i$ such that $\vert A_i \vert > 1$ and $i \neq j_k$,  then it must be that $p_{j_1} = p_{j_2} = \cdots = p_{j_\ell}$. In this case, we set $\widehat{m}_{i-1,i+1} = m_{i-1,i+1} + \lambda_{p_1}$ for all $1 \leq i \leq n$ 

By construction, $\widehat{F}$, the frieze pattern generated by the quiddity cycle $(\widehat{m}_{0,2},\ldots,\widehat{m}_{n-1,n+1})$ is realizable. We claim that the corresponding dissection, $\widehat{D}$, is on an annulus $A_{n,m}$ for some $m > 0$, and not on $S_n$. 

First, note that it would be impossible for $\widehat{F}$ to be non-skeletal. A frieze pattern is non-skeletal if its quiddity cycle has at least one string of $p-2$ entries $\lambda_p$. We produce $\widehat{F}$ by beginning with a skeletal frieze pattern $\mathcal{F}$ which passes the realizability test, then we add values $\lambda_p$ to several entries of the quiddity cycle; thus, we will not produce longer strings of entries $\lambda_p$.  

Now, note that the quiddity cycle from a skeletal dissection of $S_n$ will be of the form \begin{equation}\label{eq:punctureddiscquid}
(\ldots,\lambda_{p_1},\ldots, \lambda_{p_1},\lambda_{p_1} +\lambda_{p_2}, \lambda_{p_2},\ldots,\lambda_{p_2},\lambda_{p_2}+\lambda_{p_3},\ldots) \end{equation} with a string of $p_i-3$ entries $\lambda_{p_i}$ between each $\lambda_{p_i} + \lambda_{p_{i+1}}$. The entries $\lambda_{p_i}$ could not have been the result of adding $\lambda_{p_i}$ to the entry in the original quiddity cycle, or else the original entry would be zero. Similarly, if one of the entries $\lambda_{p_i} + \lambda_{p_{i+1}}$ resulted from adding either $\lambda_{p_i}$ or $\lambda_{p_{i+1}}$ to the original quiddity sequence, then the original frieze pattern would have failed the realizability test; that is, it would have satisifed one of the conditions of Proposition \ref{proposition:unrealizability1}. If $\lambda_{p_i} = \lambda_{p_{i+1}}$, then the original quiddity cycle would have had a string of over $p-2$ entries $\lambda_p$. If $\lambda_{p_i} \neq \lambda_{p_{i=1}}$, then the original quiddity cycle would have had adjacent entries $m_{j-1,j+1},m_{j,j+2}$ such that $A_j \cap A_{j+1} = \emptyset$. 

Thus, $\widehat{\D}$ is a dissection of $A_{n,m}$, for some $m \geq 1$. At each vertex $j_k$, $v_{j_k}$ is incident to two distinct outer $p_{j_k}$-subgons, one of which is shared with $v_{j_k}-1$ and the other with $v_{j_k}+1$. At each $j_k$, we identify these two subgons, forming a quotient dissection $\overline{\D}$. By construction, for all $i$, $m_{i-1,i+1}$ is the weighted sum of subgons incident to $v_i$ in $\overline{\D}$.
\end{proof}




We end this section by describing the universal cover of quotient dissections. As in the case of ordinary dissections, we will use the universal cover in our matching formulas in Section \ref{sec:Match}.

\begin{definition}
Let $\overline{\D}$ be a quotient dissection of $A_{n,m}$, and let $\D$ be the dissection of $A_{n,m}$ obtained by considering all identified pairs of subgons as distinct. Let $I$ be the dissected infinite strip, which is the universal cover of $\D$. 

Let $\poly{1}$ and $\poly{2}$ be two subgons in $\D$ which are both incident to $v_i$ and are identified in $\overline{\D}$. In $I$, for all $k \in \mathbb{Z}$ there are subgons $\poly{1}^k$ and $\poly{2}^k$ incident to $v_i^k$ such that $\rho(\poly{j}^k) = \poly{j}$ for $j = 1,2$. We identify all pairs $\poly{1}^k$ and $\poly{2}^k$, and repeat this process for all other pairs of subgons identified in $\overline{\D}$. Call this dissection of the infinite strip with identifications $\overline{I}$. We call $\overline{I}$ the universal cover of  $\overline{\D}$. 
\end{definition}

By construction, if $\poly{i}$ and $\poly{j}$ are two subgons of $\overline{I}$ which are identified, then $\rho(\poly{i})$ and $\rho(\poly{j})$ are identified in $\overline{\D}$. 

\subsection{Cutting}\label{subsec:Cut}

The conditions of Proposition \ref{prop:realizability} are not sufficient for non-skeletal frieze patterns. For example, the quiddity cycle $(1+\sqrt{2}, 1, 2, 1+\sqrt{2})$ satisfies all conditions, but we claim it is not realizable. Since this frieze pattern is not skeletal, we can \emph{cut} it in order to make it simpler. Baur, Parsons, and Tschabold introduced the notion of cutting a frieze pattern when proving one of their main results \cite{1}. They were working with frieze patterns of positive integers and triangulations of surfaces; we extend the notion of cutting to general dissections. 

\begin{definition}\label{def:cut}
Let $\mathcal{F}$ be a frieze pattern with quiddity cycle $(m_{0,2},\ldots,m_{n-1,n+1})$ where, for some $p \in \mathbb{Z}_{\geq 3}$, there exists $1 \leq i \leq n-(p-3)$ such that $m_{i-1,i+1} = m_{i,i+2} = \cdots = m_{i+(p-3)-1,i+(p-3)+1} = \lambda_p$. We can \emph{cut} at the interval $[i,i+p-3]$ and produce a new quiddity cycle $(m'_{0,2},\ldots,m'_{n-(p-2)-1,n-(p-2)+1})$. If $n > p-1,$ then we assume without loss of generality $1 < i < n-(p-3)$, and define\[
m'_{j-1,j+1} = \begin{cases} m_{j-1,j+1} & j < i-1\\
m_{i-1,i+1} - \lambda_p & j = i-1\\
m_{i+(p-2)-1,i+(p-2) + 1} - \lambda_p & j = i\\
m_{j + (p-2) -1, j+ (p-2) + 1} & j > i \\
\end{cases}
\]

If $n = p-1$, assume without loss of generality that $i = 2$. Then, the new quiddity cycle is 1-periodic, with $m_{0,2}' = m_{0,2} - 2\lambda_p$.

If $n = p-2$, then this is the 1-periodic frieze pattern with quiddity cycle $(\lambda_p)$, which corresponds to an empty dissection of a $p$-gon. Such a frieze pattern is finite by results of Holm and J{\o}rgensen  \cite{14}. We do not define the cut of such a frieze pattern. \end{definition}

As an example, we look again at our example of the quiddity sequence $(1 + \sqrt{2},1,2,1+\sqrt{2})$. We can cut at the interval $[2,2]$, producing $(\sqrt{2},1,1+\sqrt{2})$. The resulting quiddity cycle does not pass the realizability test. We will see in Lemma \ref{lem:CutRealizable} that this implies the original quiddity cycle is also not realizable.

\begin{remark}
\begin{enumerate}
    \item It may be necessary to cyclically shift a quiddity cycle to cut it using Definition \ref{def:cut}.
    \item If a quiddity cycle does not pass the realizability test, Proposition \ref{proposition:unrealizability1}, then the result of cutting could give negative entries in the adjusted cycle. For example, if our quiddity cycle is $(1,\sqrt{2},\sqrt{2},\sqrt{3})$, then cutting at interval $[2,3]$ results in $(1-\sqrt{2},\sqrt{3}-\sqrt{2})$.
\end{enumerate}
\end{remark}

We define $p$-\emph{gluing}, an inverse operation to cutting an interval of length $p-2$. Our definition follows that of Holm and J{\o}rgensen in \cite{14}. Unlike in the case of cutting, any frieze pattern admits a $p$-gluing for any $p \in \mathbb{Z}_{\geq 3}$ and at any interval. 

\begin{definition}\label{def:gluing}
Let $\F$ be a frieze pattern with quiddity cycle $(m_{0,2},\ldots,m_{n-1,n+1})$, and let $p \in \mathbb{Z}_{\geq 3}$. For some $1 \leq i \leq n$, we $p$-\emph{glue} between $i$ and $i+1$, producing a new quiddity cycle $(m'_{0,2},\ldots,m'_{n-1+(p-2),n+1+(p+2)})$ defined by \[
m_{j-1,j+1}' = \begin{cases}
m_{j-1,j+1} & j < i\\
m_{i-1,i+1} + \lambda_p & j = i\\
\lambda_p & i < j < i+(p-1)\\
m_{i,i+2} + \lambda_p & j = i+(p-1)\\
m_{j - 1 - (p-1), j+1 - (p-1)} & j > i + (p-1)\\
\end{cases}
\]
\end{definition}

For example, we can 6-glue between $1$ and $2$ on the quiddity cycle $(2\sqrt{2},\sqrt{2})$, producing $(2\sqrt{2} + \sqrt{3}, \sqrt{3}, \sqrt{3}, \sqrt{3}, \sqrt{3}, \sqrt{2} + \sqrt{3})$.
For any $p \in \mathbb{Z}_{\geq 3}$, $p$-gluing between $i$ and $i+1$ and cutting at the interval $[i,i+p-3]$ are inverse operations. 

\begin{lemma}\label{lem:CutRealizable}
Suppose a frieze pattern $\mathcal{F}$ with quiddity cycle $(m_{0,2},\ldots,m_{n-1,n+1})$ is realizable by a dissection of $A_{n,m}, S_n$, or an $n$-gon, $P_n$. 
\begin{enumerate}
    \item If we can cut this quiddity cycle at $[i,i+p-3]$, then resulting frieze pattern is realizable by a dissection of $A_{n-(p-2),m}, S_{n-(p-2)}$, or $P_{n-(p-2)}$. 
    \item For any $p \in \mathbb{Z}_{\geq 3}$ and any $1 \leq i \leq n$, the frieze pattern resulting from  $p$-gluing between $i$ and $i+1$ is realizable by a dissection of $A_{n+(p-2),m}, S_{n+(p-2)}$, or $P_{n+(p-2)}$. 
\end{enumerate}
\end{lemma}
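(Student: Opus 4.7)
The plan is to realize each operation geometrically: a cut corresponds to deleting a $p$-gon ear from the realizing dissection, while a $p$-gluing corresponds to attaching a new $p$-gon ear along an outer boundary edge. Parts (1) and (2) are inverse operations, so it suffices to construct the modified dissection in each direction and check that its quiddity cycle matches Definitions \ref{def:cut} and \ref{def:gluing}.

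For part (1), let $\mathcal{D}$ realize $\mathcal{F}$. The hypothesis that a cut is possible at $[i, i+p-3]$ says $m_{j-1,j+1} = \lambda_p$, equivalently $A_j = \{p\}$, for each $j = i, \ldots, i+p-3$. Since $|A_j| = 1$ forces $v_j$ to meet no arc other than its two adjacent boundary edges (any diagonal, bridging, or self-folded arc at $v_j$ would split the neighborhood into at least two subgons), the boundary edges $(v_{j-1}, v_j)$ and $(v_j, v_{j+1})$ for these indices all belong to a single subgon $\mathcal{P}$. This $\mathcal{P}$ has $p-1$ edges on the outer boundary, hence is a $p$-gon ear whose remaining edge is the diagonal $(v_{i-1}, v_{i+p-2})$. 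Delete $\mathcal{P}$ together with the $p-2$ interior boundary vertices $v_i, \ldots, v_{i+p-3}$ to obtain $\mathcal{D}'$. Because the deletion is purely on the outer boundary, the surface type is preserved (the puncture or inner boundary of $A_{n,m}$ is untouched). The only affected multisets are $A_{i-1}$ and $A_{i+p-2}$, each of which loses a copy of $p$, which matches the cut formula exactly.

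For part (2), given a realizing $\mathcal{D}$, we insert a new $p$-gon ear along the outer edge $(v_i, v_{i+1})$: introduce $p-2$ fresh outer vertices $v^{(1)}, \ldots, v^{(p-2)}$ subdividing this edge and declare the region bounded by $v_i, v^{(1)}, \ldots, v^{(p-2)}, v_{i+1}$ together with the (now internal) former edge $(v_i, v_{i+1})$ to be the new $p$-gon $\mathcal{P}$. The resulting dissection $\mathcal{D}'$ lives on a surface of the same type with $n + (p-2)$ outer vertices. The multisets $A_i$ and $A_{i+1}$ each gain a copy of $p$, each new vertex has singleton multiset $\{p\}$, and all other multisets are unchanged, so the quiddity cycle agrees with Definition \ref{def:gluing}.

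The main delicate point is the small-$n$ case of part (1), namely $n = p-1$, where the two diagonal endpoints of the ear coincide and the diagonal becomes a peripheral loop based at a single outer vertex (forcing $\mathcal{D}$ to live on an annulus $A_{p-1,m}$ with a self-folded ear). In this configuration the unique surviving outer vertex sees $\mathcal{P}$ from both sides of the loop, so its multiset contains $p$ with multiplicity two, and deletion of $\mathcal{P}$ subtracts $2\lambda_p$ from the single quiddity entry, in accordance with the special case $m'_{0,2} = m_{0,2} - 2\lambda_p$ of Definition \ref{def:cut}. The remaining degenerate case $n = p-2$ is explicitly excluded by Definition \ref{def:cut}, so nothing more is needed.
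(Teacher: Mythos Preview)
Your proof is correct and follows essentially the same approach as the paper: cutting corresponds to removing a $p$-gon ear from the realizing dissection, and $p$-gluing corresponds to attaching a $p$-gon ear along an outer boundary edge. Your argument is in fact more explicit than the paper's in two places: you justify carefully why $m_{j-1,j+1}=\lambda_p$ forces $A_j=\{p\}$ and hence the existence of an ear, and you treat the $n=p-1$ self-folded case in detail, whereas the paper leaves both points implicit.
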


\begin{proof}
(1) Since we can cut $(m_{0,2},\ldots,m_{n-1,n+1})$ at $[i,i+p-3]$, it must be that $m_{i-1,i+1} = \cdots = m_{i+(p-3) -1, i+(p-3)+1} = \lambda_p$; since $\mathcal{F}$ is realizable, $m_{i-2,i} = \sum_{q \in A_{i-1}} \lambda_q$ and $m_{i+p-3,i+p-1} = \sum_{q \in A_{i+p-2}} \lambda_q$ where $p \in A_{i-1}$, $p \in A_{i+p-2}$, $\vert A_{i-1} \vert > 1$, and $\vert A_{i+p-2} \vert > 1$ . In the realization of $\mathcal{F}$, vertices $v_i,\ldots,v_{i+p-3}$ are all adjacent to an ear. This ear's unique non-boundary edge is between $v_{i-1}$ and $v_{i+p-2}$. Performing the algebraic operation of cutting $(m_{0,2},\ldots,m_{n-1,n+1})$ at $[i,i+p-3]$ is equivalent to removing this ear from the dissection. If the realization of $\mathcal{F}$ is a dissection of an $n$-gon, then removing this ear produces an $(n - (p-2))$-gon and the resulting frieze pattern is width $n - (p-2)-3 = n-p-5$. If the realization of $\mathcal{F}$ is a dissection of $S_n$ or $A_{n,m}$, then removing this ear produces $S_{n-(p-2)}$ or $A_{n - (p-2),m}$ respectively. 

Note that, if $m_{i-1,i+1} = \lambda_p$ for all $i$, then this is a finite frieze pattern of width $p-3$ which corresponds to an empty dissection of a $p$-gon. We do not define the cut of such a frieze pattern in Definition \ref{def:cut}.

(2) A $p$-gluing between $i$ and $i+1$ corresponds to attaching an ear of size $p$ to the realization of $\mathcal{F}$ such that the edge between vertices $v_i$ and $v_{i+1}$ is the unique non-boundary edge of the ear.
\end{proof}

\begin{center}
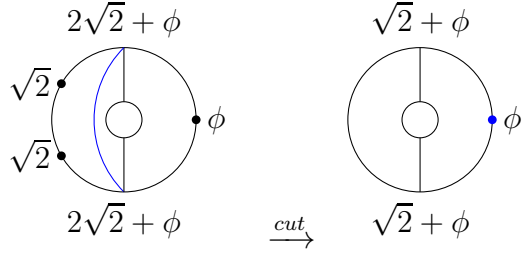
\begin{figure}
\centering
\begin{tabular}{c c c c }
     
\begin{tikzpicture}[scale = 0.6]

\draw (0,0) circle (2\R);
\draw (0,0) circle (0.5\R);
\coordinate(A) at (90:2\R);
\node[circle, fill = black, scale = 0.3] at (150:2\R){};
\node[circle, fill = black, scale = 0.3] at (210:2\R){};
\node[circle, fill = black, scale = 0.3] at (0:2\R){};
\coordinate(C) at (-90:2\R);
\coordinate(B) at (-90:0.5\R);
\coordinate(E) at (90:0.5\R);
\node[left] at (150:2\R) {$\sqrt{2}$};
\node[left] at (210:2\R) {$\sqrt{2}$};
\node[above] at (90:2\R) {$2\sqrt{2} + \phi$};
\node[below] at (270:2\R) {$2\sqrt{2} + \phi$};
\node[right] at (0:2\R) {$\phi$};
\draw(90:2\R) -- (90:0.5\R);
\draw[blue](C) to [out = 135, in = -135](A);
\draw(C) to [out = 90, in = -90](B);

\end{tikzpicture}

&$\xrightarrow{cut}$&
\begin{tikzpicture}[scale = 0.6]

\draw (0,0) circle (2\R);
\draw (0,0) circle (0.5\R);
\coordinate(A) at (90:2\R);
\coordinate(C) at (-90:2\R);
\coordinate(B) at (-90:0.5\R);
\coordinate(E) at (90:0.5\R);
\node[circle, fill = blue, scale = 0.3] at (0:2\R){};
\node[above] at (90:2\R) {$\sqrt{2} + \phi$};
\node[below] at (270:2\R) {$\sqrt{2} + \phi$};
\node[right] at (0:2\R) {$\phi$};
\draw(90:2\R) -- (90:0.5\R);
\draw(C) to [out = 90, in = -90](B);
\end{tikzpicture}
\end{tabular}
    \caption{The algebraic operation of cutting a frieze pattern corresponds to removing an ear. Gluing a frieze pattern corresponds to adding an ear.}
\end{figure}
\end{center}

When a frieze pattern $\F$ is only realizable by a quotient dissection, cutting and gluing have the same geometric meaning.

\begin{lemma}\label{lem:DumplingGlue}
Suppose a frieze pattern $\mathcal{F}$ with quiddity cycle $(m_{0,2},\ldots,m_{n-1,n+1})$ is realizable by a quotient dissection $\overline{\D}$ of $A_{n,m}$. 
\begin{enumerate}
    \item If we can cut this quiddity cycle at $[i,i+p-3]$, then the resulting frieze pattern is realizable by a quotient dissection of $A_{n-(p-2),m}$. 
    \item For any $p \in \mathbb{Z}_{\geq 3}$ and any $1 \leq i \leq n$, the frieze pattern resulting from  $p$-gluing between $i$ and $i+1$ is realizable by a quotient dissection of $A_{n+(p-2),m}$. 
\end{enumerate}
\end{lemma}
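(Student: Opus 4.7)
The plan is to reduce both parts to Lemma \ref{lem:CutRealizable} by pulling back to the underlying (non-quotient) dissection. Let $\D$ be the ordinary dissection of $A_{n,m}$ whose quotient is $\overline{\D}$. The key observation is that Definition \ref{def:QuotientDissection} forbids identifying subgons whose vertices all lie on the outer boundary, so ears in $\D$ are never collapsed by the quotient operation. Equivalently, ears correspond bijectively between $\D$ and $\overline{\D}$, and attaching or removing an ear commutes with forming the quotient.

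For part (1), assume the quiddity cycle admits a cut at $[i,i+p-3]$, so that $m_{j-1,j+1} = \lambda_p$ for $i \leq j \leq i+p-3$. In $\overline{\D}$ each such vertex $v_j$ is incident to exactly one subgon, a $p$-gon, which must fill the entire angle at $v_j$ and therefore use both boundary edges $v_{j-1}v_j$ and $v_j v_{j+1}$. Since a boundary edge is a face of at most one subgon, the unique $p$-gon at $v_j$ and the unique $p$-gon at $v_{j+1}$ must coincide. Iterating, all of $v_i,\ldots,v_{i+p-3}$ are incident to a common $p$-subgon $E$ in $\overline{\D}$ whose vertex set is $\{v_{i-1},v_i,\ldots,v_{i+p-2}\}$, i.e.\ $E$ is an ear. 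Because ears cannot participate in any identification, $E$ lifts to a genuine ear in $\D$. Applying Lemma \ref{lem:CutRealizable}(1) to $\D$, we obtain a dissection $\D'$ of $A_{n-(p-2),m}$ by deleting $E$; retaining in $\D'$ exactly the identifications used to form $\overline{\D}$ from $\D$ (none of which involved $E$) yields a quotient dissection $\overline{\D'}$ whose quiddity cycle is precisely the cut cycle.

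For part (2), apply Lemma \ref{lem:CutRealizable}(2) to $\D$ to attach an ear of size $p$ along the edge $v_i v_{i+1}$, producing a dissection $\D''$ of $A_{n+(p-2),m}$ whose quiddity cycle is the $p$-gluing of the cycle of $\D$. Since the new ear has all vertices on the outer boundary, it is ineligible to be identified with any other subgon, so we may form a quotient dissection $\overline{\D''}$ by applying to $\D''$ the same identifications that produced $\overline{\D}$ from $\D$. By construction, the quiddity entries at vertices not adjacent to the new ear are unchanged by gluing, while the entries at $v_i$ and $v_{i+1}$ each gain a term $\lambda_p$ (from the new ear) and the $p-3$ new vertices each contribute $\lambda_p$, matching Definition \ref{def:gluing} exactly.

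The principal subtlety is verifying in part (1) that the $p$-subgon witnessed at $v_i,\ldots,v_{i+p-3}$ in $\overline{\D}$ is actually a single ear rather than several identified subgons; this is where the ``all vertices on the outer boundary'' prohibition in Definition \ref{def:QuotientDissection} is essential, since it is precisely what forces the ear to pull back to $\D$ untouched. Once this is settled, both directions reduce cleanly to the analogous statements for ordinary dissections.
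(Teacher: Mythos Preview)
Your argument tracks the paper's almost exactly: both rest on the observation that the restrictions in Definition~\ref{def:QuotientDissection} prevent ears from participating in any identification, so cutting or gluing an ear in the underlying $\D$ commutes with passing to $\overline{\D}$. The paper states this in a single sentence; your explicit reduction to Lemma~\ref{lem:CutRealizable} via the underlying dissection is a clean way to package the same idea.

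One small caveat on part~(1). To conclude that the class $E$ at $v_i,\ldots,v_{i+p-3}$ is represented by a \emph{single} subgon of $\D$, you need the ``no shared edge'' clause of Definition~\ref{def:QuotientDissection}, not only the ``all vertices on the outer boundary'' clause you single out at the end. Without the edge-sharing prohibition, $E$ could in principle consist of several non-ear $p$-gons identified across the internal arcs at the $v_j$, each carrying a vertex on the inner boundary; the outer-boundary clause alone does not exclude this, and then your claim that $E$ has vertex set exactly $\{v_{i-1},\ldots,v_{i+p-2}\}$ would not follow. The edge-sharing clause is what forces each such $v_j$ to be incident to exactly one subgon already in $\D$ (otherwise two adjacent subgons there would lie in the same class), after which that subgon is visibly an ear and your argument---and the paper's---goes through.
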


\begin{proof}
Recall we do not allow identifying subgons with all vertices on the same boundary. Thus, if we can cut $\F$ at the interval $[i,i+p-3]$, then in the quotient dissection, $\overline{\D}$, vertices $v_i,\ldots,v_{i+p-3}$ are incident to an ear which has not been identified to any other subgon. Therefore, we can remove this ear and this produces a valid quotient dissection of $A_{n-(p-2),m}$. Similarly, we can always add a ear of size $p$ to a quotient dissection, which would produce a quotient dissection of $A_{n+(p-2),m}$. 
\end{proof}

\subsection{Realization Algorithm}\label{subsec:RealizabilityAlgorithm}

Since we have direct realizability criteria for skeletal frieze patterns in Proposition \ref{prop:realizability}, we can use cutting to characterize the realizability of arbitrary frieze patterns of Type $\Lambda_{p_1,\ldots,p_s}$. 

Let $\F_0$ be any frieze pattern of Type $\Lambda_{p_1,\ldots,p_s}$ with a period $n$ quiddity cycle.If $\F_0$ is finite, of width $n-3$, we must consider the quiddity row of $\F_0$ as $n$-periodic. If $\F_0$ is infinite, we can consider any valid period of its quiddity row. We run the following algorithm to determine whether $\F_0$ is realizable; moreover, if $\F_0$ is realizable, we will be able to determine which surface can be dissected to realize $\F_0$ or whether $\F_0$ is only realizable by a quotient dissection.

Let $i \geq 0$.
\begin{enumerate}
    \item If $\F_i$ satisfies at least one condition of Proposition \ref{proposition:unrealizability1} or has any entries in the quiddity sequence which are less than or equal to 0, it is unrealizable.
    \item If $\F_i$ is generated by the quiddity cycle $(\lambda_p)$, then $\F_i$ is realizable by an empty dissection of a $p$-gon. $\F_0$ is realizable by a dissection of an $n$-gon.
    \item If $\F_i$ passes the realizability test, is not generated by the quiddity cycle $(\lambda_p)$, and is not skeletal, let $\F_{i+1}$ be the result of cutting $\F_i$ at one interval. Return to step 1.
    \item If $\F_i$ passes the realizability test and is skeletal, but does not satisfy Proposition \ref{prop:realizability}, then $\F_i$ is realizable by a quotient dissection. $\F_0$ is realizable by a quotient dissection of $A_{n,m}$.    
    \item If $\F_i$ satisfies Proposition \ref{prop:realizability}, implying that it passes the realizability test and is skeletal, then $\F_i$ is realizable by a dissection of an annulus or once-punctured disc. $\F_0$ is realizable by a dissection of $A_{n,m}$ (for some $m$) or $S_n$.
    \begin{enumerate}
        \item Lemma \ref{lem:SkeletalSn} provides one way to determine whether $\F_i$ is realizable by $S_n$ or $A_{n,m}$ 
        \item Proposition \ref{prop:GrCo2} gives another method of checking: $\F_i$ is realizable by $S_n$ if only if the \emph{principal growth coefficient} (see Theorem \ref{thm:GrowthCoefficient}) of $\F_i$ is 2. 
    \end{enumerate}
\end{enumerate}

When we end in cases 2,4, or 5, Lemma \ref{lem:CutRealizable} and Lemma \ref{lem:DumplingGlue} give the guarantee that $\F_0$ is also realizable by the same type of dissection as $\F_i$. In each case, these realizations can be constructed by geometrically gluing subgons onto the skeletal realization of $\F_i$ in the reverse order of the cuts performed to get from $\F_0$ to $\F_i$.

\begin{example}
\begin{enumerate}
    \item Consider $\F_0$ with quiddity cycle  $(1+\sqrt{2}+\sqrt{3},\sqrt{3},\sqrt{3},\sqrt{3},1+ \sqrt{3},1,1 +\sqrt{2}+\sqrt{3}, 1+\sqrt{2}) $. This quiddity cycle passes the realizability test, but is not skeletal. We cut at $[6,6]$ to produce $\F_1$ with quiddity cycle  $(1+\sqrt{2}+\sqrt{3},\sqrt{3},\sqrt{3},\sqrt{3},\sqrt{3},\sqrt{2}+\sqrt{3}, 1+\sqrt{2})$. This new quiddity cycle still passes the realizability test and still is not skeletal. We cut at $[2,5]$ to produce $\F_2$ with quiddity cycle $(1 + \sqrt{2},\sqrt{2},1+\sqrt{2})$. By Lemma \ref{lem:SkeletalSn}, we see that $\F_2$ is realizable by a skeletal dissection of $S_3$. Thus, $\F_0$ is realizable by a dissection of $S_8$.
    \item Consider $\F_0$ with quiddity cycle $(1+\sqrt{2},1,2,1+\sqrt{2})$. We cut at $[2,2]$ to produce $(\sqrt{2},1,1+\sqrt{2})$. Since this quiddity cycle does not pass the realizability test, $\F_0$ is not realizable. 
\end{enumerate}
\end{example}

\section{Matchings}\label{sec:Match}

\subsection{Background}

We now introduce combinatorial interpretations of the entries of frieze patterns of Type $\Lambda_{p_1,\ldots,p_n}$ from dissected surfaces. Our interpretations will hold in both infinite and finite cases. Each entry in a frieze pattern will correspond to to a sum of weighted \emph{matchings} on the surface. We will in fact introduce two distinct weightings for matchings; these two weightings shed light on different features of these frieze patterns. 

\begin{definition}\label{def:Matching}
Let $S$ be a dissected surface, with vertices $v_1,\ldots, v_n$ on one boundary, labeled in counterclockwise order. A \emph{matching} or \emph{path} $w$ between $v_i$ and $v_j$ is a sequence of subgons $w = \poly{i+1},\ldots,\poly{j-1},$ (working modulo $n$) such that $\poly{k}$ is incident to $v_k$ for all $i+1 \leq k \leq j-1$.
\end{definition}

We will use $\pathset{i}{j}$ to denote the set of all paths from $v_i$ to $v_j$. We will also set the convention that $\pathset{i}{i+1} = \{ \emptyset\}$ while $\pathset{i}{i} = \emptyset$. These conventions will allow us to extend our combinatorial interpretation to the upper boundary rows of 0's and 1's in a frieze pattern.

Traditionally, a matching $w = \poly{i+1},\ldots,\poly{j-1}$ in a triangulated surface is only admissible if all $\poly{k}$ are pairwise distinct. For purposes of this section, we denote the subset of matchings with this property $\pathset{i}{j}^d$, with $d$ for ``distinct''. Later, we will rephrase this property by weighting matchings which use the same triangle twice with weight 0 (see Definition \ref{def:TradWeight}). 

Broline, Crowe and Isaacs were the first to connect matchings to frieze patterns \cite{5}.

\begin{theorem}[\cite{5}]\label{thm:BCI}
Let $\F = \{m_{i,j}\}$ be a finite frieze pattern of positive integers of width $n-3$, and let $P$ be the triangulated $n$-gon which is the realization of $\F$. Label the vertices of $P$ in counterclockwise order $v_1,\ldots,v_n$, treating these labels modulo $n$. Then, \[
m_{i,j} = \sum_{w \in \pathset{i}{j}^d} 1 = \vert  \pathset{i}{j}^d\vert
\]

\end{theorem}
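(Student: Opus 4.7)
The plan is to induct on $j - i$. The base cases $j - i \in \{0,1,2\}$ follow directly from the conventions: $|\pathset{i}{i}^d| = 0 = m_{i,i}$ and $|\pathset{i}{i+1}^d| = 1 = m_{i,i+1}$, while $|\pathset{i}{i+2}^d|$ counts the triangles incident to $v_{i+1}$, which by construction of the quiddity cycle equals $m_{i,i+2}$.

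For the inductive step, I apply the three-term recurrence of Lemma~\ref{lem:DivisionFreeFrieze} to write $m_{i,j} = m_{i,i+2}\,m_{i+1,j} - m_{i+2,j}$. By the inductive hypothesis, $m_{i+1,j} = |\pathset{i+1}{j}^d|$ and $m_{i+2,j} = |\pathset{i+2}{j}^d|$, so it suffices to prove the combinatorial identity
\[
|\pathset{i}{j}^d| \;=\; m_{i,i+2}\cdot |\pathset{i+1}{j}^d| \;-\; |\pathset{i+2}{j}^d|.
\]
Since $m_{i,i+2}$ is the number of triangles incident to $v_{i+1}$, the product $m_{i,i+2}\cdot|\pathset{i+1}{j}^d|$ enumerates pairs $(T,w)$ with $T$ a triangle at $v_{i+1}$ and $w=(T_{i+2},\ldots,T_{j-1}) \in \pathset{i+1}{j}^d$. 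Those pairs in which $T \notin \{T_{i+2},\ldots,T_{j-1}\}$ are in natural bijection with $\pathset{i}{j}^d$ via concatenation $(T,w) \mapsto (T,T_{i+2},\ldots,T_{j-1})$. Therefore the goal reduces to producing a bijection
\[
\Psi \colon \bigl\{(T,w) : T \text{ incident to } v_{i+1},\ w \in \pathset{i+1}{j}^d,\ T \in w\bigr\} \;\longleftrightarrow\; \pathset{i+2}{j}^d.
\]

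To construct $\Psi$, the key observation is that if $T$ has vertices $v_{i+1}, v_a, v_b$ and appears in $w$ at some position $k$, then $v_k$ must be a vertex of $T$, so $k \in \{a,b\}$; the distinctness of $w$ forces $T$ to appear at exactly one such position. The opposite edge $v_a v_b$ of $T$ is either a boundary edge or a diagonal of the triangulation, and in either case it separates the $n$-gon into two sub-polygons, exactly one of which contains $v_{i+1}$. The image $\Psi(T,w)$ is built by restricting $w$ to the two sub-polygons on either side of $v_av_b$ and splicing them together using a canonical triangle forced by the local triangulation at the cut, producing a distinct matching from $v_{i+2}$ to $v_j$. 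The inverse map locates, from a matching in $\pathset{i+2}{j}^d$, the unique place where the splicing occurred, and from that recovers both $T$ and the original position $k$. The main obstacle is verifying that $\Psi$ is well-defined and invertible in all cases; this requires a careful case analysis exploiting planarity, since one must track how ears and short diagonals created by $T$ force the matchings on each side of the chord $v_av_b$ to line up compatibly (for instance, when $v_av_b$ creates an ear, the unique triangle in that ear must be assigned as the splice). Once $\Psi$ is established, the identity follows and the induction closes.
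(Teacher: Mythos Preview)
Your inductive framework and the reduction to the identity $|\pathset{i}{j}^d| = m_{i,i+2}\,|\pathset{i+1}{j}^d| - |\pathset{i+2}{j}^d|$ are correct, but the proof has a genuine gap at the decisive step: the map $\Psi$ is never actually defined. The phrase ``restricting $w$ to the two sub-polygons on either side of $v_av_b$ and splicing them together using a canonical triangle forced by the local triangulation at the cut'' does not specify a function, and you yourself flag that checking well-definedness and invertibility is ``the main obstacle.'' Concretely, the triangle $T$ need not occur at position $i+2$ in $w$ (take a triangulation where the triangle on edge $v_{i+1}v_{i+2}$ has its third vertex strictly between $v_{i+3}$ and $v_{j-1}$), so some nontrivial reshuffling really is required; your sketch does not say what that reshuffling is or why it is invertible.

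The paper states this theorem without proof (it is cited from \cite{5}), but it \emph{does} prove the generalization, Theorem~\ref{thm:LocalWeight}, and that argument specializes to give a clean proof here that avoids your bijection entirely. The difference is that the paper does not restrict to distinct matchings at all: it sums the local weight $wt_L$ over \emph{every} matching in $\pathset{i}{j}$, and the needed identity then follows from the recursive formula for $wt_L$ in Lemma~\ref{lem:EquivalentLocalWeight}. The only bijection used is the trivial one $h \mapsto (\hat{\mathcal{P}},\hat{\mathcal{P}},h)$ from $\pathset{i+2}{j}$ to the matchings in $\pathset{i}{j}$ whose first two entries are the unique subgon $\hat{\mathcal{P}}$ on edge $v_{i+1}v_{i+2}$; no case analysis on where $T$ sits inside $w$ is required. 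In the triangulation case this recovers the distinct-matching count because $U_2(\lambda_3)=0$ kills repeats in the traditional weighting (Corollary~\ref{cor:TraditionalWeight} via Theorem~\ref{thm:EqualWeights}). If you want a direct bijection in your framework, one that works is to walk along the fan of triangles at $v_{i+1}$: prepend $\hat T$ to $u\in\pathset{i+2}{j}^d$, and whenever this creates a collision at the third vertex of the current triangle, swap in the next triangle of the fan across the shared diagonal; this terminates and is reversible. But that is not what your sketch describes.
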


 Baur, Parsons, and Tschabold extended this correspondence to infinite frieze patterns of positive integers in \cite{1}. In these cases, one must work in the universal cover of the triangulated surface which realizes the frieze pattern. Even though this surface is infinite, given any two vertices $v_i^{k}$ and $v_j^\ell$ on the infinite strip with a dissection, we can work in a finite piece (a polygon) which contains $v_i^k$ and $v_j^\ell$.

 In the statement of the theorem, we use the following notation. Let $i \in \mathbb{Z}$, $n \in \mathbb{Z}_{>0}$. Then, set \[
 i_n = \begin{cases} i \pmod{n} & n \nmid i \\
 n & n \mid i \end{cases}.
 \]
 Recall our notation for infinite strips established in Section \ref{subsec:InfStrip}. 

\begin{theorem}[\cite{1}]\label{thm:BPTMatchings}
Let $\F = \{m_{i,j}\}$ be an infinite, $n$-periodic frieze pattern of positive integers. Let $S$ be the triangulated surface which is the realization of $\F$. Let $I$ be the triangulated infinite strip which is the universal cover of $S$ . Let $\{v_i^k\}$ be the infinite family of vertices of $I$ which map to the vertices on the outer boundary of $S$ under the covering map. Then,\[
m_{i,j} = \sum_{w \in \pathset{i}{j}^d} 1 = \vert  \pathset{i}{j}^d\vert
\]
where $\pathset{i}{j}$ is the set of matchings between $v_{i_n}^{\lfloor \frac{i}{n} \rfloor}$ and $v_{j_n}^{\lfloor \frac{j}{n} \rfloor}$.
\end{theorem}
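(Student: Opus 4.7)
The plan is to reduce to the finite Broline-Crowe-Isaacs case (Theorem \ref{thm:BCI}) by localizing to a finite sub-polygon of the infinite strip $I$. Fix indices $i < j$; I aim to extract a finite triangulated polygon $P \subseteq I$ that contains both endpoint vertices $v^{\lfloor i/n \rfloor}_{i_n}$ and $v^{\lfloor j/n \rfloor}_{j_n}$ as well as every triangle of $I$ incident to any intermediate vertex $v_k^{\lfloor k/n \rfloor}$ for $i < k < j$. For the annulus case, one takes $P$ to be the region of the strip between two copies of the chosen bridging arc, $e^r$ and $e^s$, with $r \leq \lfloor i/n \rfloor$ and $s \geq \lfloor j/n \rfloor + 1$; this $P$ inherits a genuine polygon triangulation from $I$. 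For the once-punctured disc case, the analogous cutting uses two asymptotic arcs in the dissection; the resulting $P$ has one extra ``ideal'' vertex at $+\infty$ playing the role of the puncture, which we treat as a genuine polygon vertex.

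The frieze entry $m_{i,j}$ of $\F$ coincides with the corresponding entry in the finite frieze pattern associated to $P$. This is because both are determined via the unimodular recurrence (Lemma \ref{lem:DivisionFreeFrieze}) by the quiddity entries at the intermediate vertices $v^{\lfloor (i+1)/n \rfloor}_{(i+1)_n}, \ldots, v^{\lfloor (j-1)/n \rfloor}_{(j-1)_n}$, and at each such vertex the set of incident triangles is identical in $P$ and in $I$ (hence in $\F$). Applying Theorem \ref{thm:BCI} to $P$ then shows that this entry counts the distinct matchings in $P$ between the two endpoints. Since every triangle that can appear in such a matching already lies in $P$ by construction, matchings in $I$ between our endpoints coincide with matchings in $P$, and we conclude $m_{i,j} = \vert \pathset{i}{j}^d \vert$.

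The main obstacle is the once-punctured disc case, where the sub-polygon $P$ acquires an ideal vertex at $+\infty$; one must check carefully that Theorem \ref{thm:BCI} applies to such a ``polygon with one ideal vertex''. The simplest route is to verify that the BCI argument goes through unchanged, since the unimodular identity and the underlying matching-counting bijection depend only on the combinatorial data (a cyclic vertex sequence together with a triangulation) and are insensitive to whether a vertex is finite or ideal. A secondary technical point is confirming that an asymptotic arc, and hence a suitable cutting curve, always exists in the dissection of a once-punctured disc, which follows because every dissection of $S_n$ must contain at least one bridging arc. Once these bookkeeping items are in place, the combinatorial heart of the theorem is delivered directly by Theorem \ref{thm:BCI}.
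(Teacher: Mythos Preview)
The paper does not provide its own proof of this theorem; it is quoted as a result from \cite{1}, with only the brief remark that ``given any two vertices $v_i^{k}$ and $v_j^\ell$ on the infinite strip with a dissection, we can work in a finite piece (a polygon) which contains $v_i^k$ and $v_j^\ell$.'' Your proposal is precisely this reduction strategy, carried out in detail, so it aligns with the paper's indicated approach and is essentially correct.

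One small comment: in the once-punctured disc case you worry about the ideal vertex at $+\infty$. An alternative, and perhaps cleaner, way to avoid this issue entirely is to observe that the BCI argument (or, equivalently, the inductive argument via Lemma~\ref{lem:DivisionFreeFrieze}) never needs the full cyclic polygon structure. It only uses the quiddity values at the intermediate vertices $v_{i+1},\ldots,v_{j-1}$ together with the incidence data of triangles at those vertices; the vertices $v_i$ and $v_j$ themselves and anything ``beyond'' them play no role. So you may simply take $P$ to be the union of all triangles incident to $v_{i+1},\ldots,v_{j-1}$, treat it as a triangulated fan-shaped region, and run the BCI induction directly, without ever closing it up into a cycle or invoking an ideal vertex. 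This sidesteps the bookkeeping you flagged.
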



Our weighting scheme will use evaluations of normalized Chebyshev polynomials at $\lambda_p$. See Section \ref{sec:FPType} for the definition of these Chebyshev polynomials and some basic results.

 In a polygon, we say a \emph{$k$-diagonal} is one which skips $k$ vertices counterclockwise. A side of a polygon is a 0-diagonal. The following result comes from work in \cite{15} and adds motivation for the presence of the terms $U_k(\lambda_p)$ in our weightings.

\begin{lemma}\label{lem:ChebyshevGeometric}
In a regular $p$-gon, the ratio of the length of a $k$-diagonal and a side is $U_k(\lambda_p)$. In particular,\begin{enumerate}
    \item since we could equivalently look at vertices skipped in clockwise order, $U_k(\lambda_p) = U_{p - 2 - k}(\lambda_p)$, and 
    \item since $k$-diagonals in a polygon will be at least as long as the sides, for $0 \leq k \leq p-2, U_k(\lambda_p) \geq 1$.
\end{enumerate}
\end{lemma}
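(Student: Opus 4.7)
The plan is to realize the claim through the standard trigonometric identity relating normalized Chebyshev polynomials of the second kind to ratios of sines, combined with the chord-length formula in a regular polygon.

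First I would set up coordinates: inscribe the regular $p$-gon in a circle of radius $R$ so that its vertices lie at angles $\tfrac{2\pi j}{p}$, for $j = 0, 1, \ldots, p-1$. A $k$-diagonal connects two vertices separated by $k+1$ steps around the polygon, so by the chord-length formula its length is $2R \sin\!\bigl(\tfrac{(k+1)\pi}{p}\bigr)$. A side is the case $k = 0$ and has length $2R \sin\!\bigl(\tfrac{\pi}{p}\bigr)$. Hence the ratio of a $k$-diagonal to a side is $\sin\!\bigl(\tfrac{(k+1)\pi}{p}\bigr) / \sin\!\bigl(\tfrac{\pi}{p}\bigr)$, independent of $R$.

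Next I would invoke the identity
\[
U_k(2\cos\theta) \;=\; \frac{\sin((k+1)\theta)}{\sin\theta},
\]
which is standard but can be verified in one line by induction: setting $f_k(\theta) := \sin((k+1)\theta)/\sin\theta$, the product-to-sum formula gives $f_1(\theta)f_{k-1}(\theta) - f_{k-2}(\theta) = f_k(\theta)$, which is exactly the defining Chebyshev recurrence from Definition~\ref{def:Chebyshev} with $f_1(\theta) = 2\cos\theta$, matched by the base cases $f_{-1} = 0$ and $f_0 = 1$. Specializing to $\theta = \pi/p$ and using $\lambda_p = 2\cos(\pi/p)$ gives $U_k(\lambda_p) = \sin((k+1)\pi/p)/\sin(\pi/p)$, which is precisely the ratio computed above. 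This proves the main statement.

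For item (1), the symmetry $U_k(\lambda_p) = U_{p-2-k}(\lambda_p)$ follows geometrically by noting that among the $p-2$ vertices not on a given chord, $k$ lie on one arc and $p-2-k$ on the other, so a ``$k$-diagonal counterclockwise'' is the same chord as a ``$(p-2-k)$-diagonal clockwise''; equivalently, $\sin((k+1)\pi/p) = \sin((p-1-k)\pi/p)$ via $\sin(\pi - x) = \sin(x)$. For item (2), on $0 \leq k \leq p-2$ we have $1 \leq k+1 \leq p-1$, and since $\sin$ is positive and concave on $(0,\pi)$ with endpoints of the admissible range at $\pi/p$ and $(p-1)\pi/p$ both equal to $\sin(\pi/p)$, the intermediate values $\sin((k+1)\pi/p)$ are bounded below by $\sin(\pi/p)$; dividing gives $U_k(\lambda_p) \geq 1$. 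There is no real obstacle here — the only potentially non-obvious step is the Chebyshev/sine identity, which I would state and quickly justify rather than reprove from scratch.
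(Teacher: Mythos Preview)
Your proof is correct. The paper does not actually supply its own proof of this lemma; it is stated as following from work in \cite{15} (Lang), so there is nothing further to compare against. Your argument via the chord-length formula together with the sine representation $U_k(2\cos\theta) = \sin((k+1)\theta)/\sin\theta$ is the standard route and is essentially what one would find in that reference.
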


We calculate some values of $U_k(\lambda_p)$. Recall that by Lemma \ref{lem:ChebyPeriodic}, $U_k(\lambda_p) = -U_{k+p}(\lambda_p)$.

\begin{center}
\begin{tabular}{c||c|c|c|c}
k \textbackslash p & 3 & 4 & 5 & 6\\\hline\hline
0 & 1 & 1 & 1 & 1\\
1 & 1 & $\sqrt{2}$& $\phi$ & $\sqrt{3}$\\
2 & 0 & 1 & $\phi$ & 2 \\
3 & -1 & 0 & 1 & $\sqrt{3}$\\
4 & -1 & -1 & 0 & 1 \\
5 & 0 & $-\sqrt{2}$ & -1 & 0\\ 
\end{tabular}
\end{center}

\subsection{Local Weighting}

We now define our first method to weight paths in dissected polygons. We will use $w[a:b] = \poly{a},\ldots,\poly{b}$ to denote a submatching of $w = \poly{i+1},\ldots,\poly{j-1}$, where $i+1\leq a\leq b\leq j-1$. If $a > b$, then $w[a:b] = \emptyset$.

\begin{definition}[Local Weighting]\label{def:LocalWeight}
Given a matching $w = \poly{i+1},\ldots,\poly{j-1}$ in a dissected polygon, we define the local weight $wt_L(w)$ recursively. Suppose $k$ is the largest positive integer such that the first $k$ subgons in $w$ are the same, i.e. $\poly{i+1} = \poly{i+2} = \cdots = \poly{i+k} \neq \poly{i+k+1}$, for some $1 \leq k \leq j-i-1$. Then, we set \[
wt_L(w) = U_k(\lambda_{\vert \poly{i+1}\vert}) wt_L(w[i+k+1:j-1]).
\]
We set $wt_L(\emptyset) = 1$.
\end{definition}

This weighting is ``local'' in the sense that it only depends on consecutive occurrences of the same subgon. See Example \ref{ex:Matchings} and Figure \ref{fig:WeightingExample} for examples of calculating $wt_L$.

Lemma \ref{lem:EquivalentLocalWeight} gives an equivalent definition of the local weighting by recursively checking if two consecutive subgons in a matching are equal. 

\begin{lemma}\label{lem:EquivalentLocalWeight}
Given a matching  $w = \poly{i+1},\ldots,\poly{j-1}$ in a dissected polygon, we define $wt_{L'}$ recursively. If $\poly{i+1} = \poly{i+2},$ set \[
wt_{L'}(w) = \lambda_{\vert \poly{i+1}\vert} wt_{L'}(w[i+2:j-1]) - wt_{L'}(w[i+3:j-1]),
\]
and otherwise we set \[
wt_{L'}(w) = \lambda_{\vert \poly{i+1}\vert} wt_{L'}(w[i+2:j-1]).
\]
We set $wt_{L'}(\emptyset) = 1$.
Then, for any $w$, $wt_L(w) = wt_{L'}(w)$.
\end{lemma}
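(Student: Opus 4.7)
The plan is to prove the identity by induction on the length $\ell = j - i - 1$ of the matching $w$, with the Chebyshev three-term recurrence $U_k(x) = x U_{k-1}(x) - U_{k-2}(x)$ (Definition \ref{def:Chebyshev}) as the key algebraic input that lets one collapse a run of equal subgons into the single coefficient $U_k(\lambda_{|\poly{i+1}|})$.

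The base cases are immediate: when $\ell = 0$ both definitions return $1$ on $\emptyset$, and when $\ell = 1$ the else-branch of $wt_{L'}$ and the $k=1$ instance of $wt_L$ both yield $U_1(\lambda_{|\poly{i+1}|}) = \lambda_{|\poly{i+1}|}$. For the inductive step, let $k \geq 1$ be the length of the maximal initial constant run of $w$, write $\lambda := \lambda_{|\poly{i+1}|}$, and set $w' := w[i+k+1\colon j-1]$ (which is $\emptyset$ when $k = \ell$).

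If $k = 1$, then $\poly{i+1} \neq \poly{i+2}$, so the else-branch of $wt_{L'}$ applies and the inductive hypothesis gives $wt_{L'}(w) = \lambda \cdot wt_L(w[i+2\colon j-1]) = U_1(\lambda)\, wt_L(w') = wt_L(w)$. If $k \geq 2$, then $\poly{i+1} = \poly{i+2}$ and the if-branch applies. Two observations pin down the recursive terms in terms of $w'$: the matching $w[i+2\colon j-1]$ has maximal initial run of length exactly $k-1$, so by the inductive hypothesis $wt_{L'}(w[i+2\colon j-1]) = U_{k-1}(\lambda)\, wt_L(w')$; and the matching $w[i+3\colon j-1]$ has maximal initial run of length exactly $k-2$, so $wt_{L'}(w[i+3\colon j-1]) = U_{k-2}(\lambda)\, wt_L(w')$. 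Combining these and applying the Chebyshev recurrence,
\[
wt_{L'}(w) = \bigl(\lambda U_{k-1}(\lambda) - U_{k-2}(\lambda)\bigr)\, wt_L(w') = U_k(\lambda)\, wt_L(w') = wt_L(w),
\]
which completes the induction.

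The only real care point — rather than a genuine obstacle — is the boundary subcase $k = 2$, where $w[i+3\colon j-1]$ literally equals $w'$ and the statement ``the initial run has length $k-2 = 0$'' must be reconciled with the rest of the argument; this is harmless because $U_0(\lambda) = 1$, so the formula $wt_L(w[i+3\colon j-1]) = U_0(\lambda)\, wt_L(w')$ still holds. A parallel harmless check occurs when $k = \ell$ and $w' = \emptyset$, where $wt_L(w') = 1$ matches the base case. These cases aside, the argument is a clean unfolding of the Chebyshev recurrence against the two recursive schemes.
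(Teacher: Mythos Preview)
Your proof is correct and follows essentially the same approach as the paper: both induct on the length of the matching, split into the cases $k=1$ and $k\geq 2$ for the maximal initial run, and invoke the Chebyshev recurrence $U_k = xU_{k-1} - U_{k-2}$ to collapse the run. Your treatment is in fact slightly more careful than the paper's in explicitly addressing the boundary subcases $k=2$ and $k=\ell$.
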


\begin{proof}
Let $w \in \pathset{i}{j}$. We will induct on $j - i$. 

If $j = i$, there are no matchings in $\pathset{i}{j}$. If $j = i+1$, then $\pathset{i}{j} = \{\emptyset\}$, and $wt_L(\emptyset) = wt_{L'}(\emptyset) = 1$. If $j = i+2$, then each matching $w \in \pathset{i}{j}$ is of the form $w = \poly{i+1}$. In this case as well, $wt_L(\poly{i+1}) = wt_{L'}(\poly{i+1}) = \lambda_{\vert \poly{i+1}\vert}$.

Now, suppose we have established the claim for all $w \in \pathset{i}{j}$ with $j \leq i + \ell - 1$, $\ell \geq 3$ and all possible values of $k$, the number of consecutive subgons at the beginning of a matching which are the same. Let $j = i + \ell$ and let $w \in \pathset{i}{j}$. Suppose the first $k \geq 1$ subgons of $w$ are the same. If $k = 1$, then by induction we immediately have \[
wt_{L'}(w) = \lambda_{\vert \poly{i+1}\vert} wt_{L'}(w[i+2,j-1]) = wt_L(w)
\]

Now let $k > 1$. Then,\[
wt_{L'}(w) = \lambda_{\vert \poly{i+1}\vert} wt_{L'}(w[i+2:j-1]) - wt_{L'}(w[i+3:j-1])
\]
By induction, we can rewrite the righthand side in terms of $wt_L$,\begin{align*}
wt_{L'}(w) = \lambda_{\vert \poly{i+1}\vert} wt_{L'}(w[i+2:j-1]) - wt_{L'}(w[i+3:j-1])\\
= \lambda_{\vert \poly{i+1}\vert} U_{k-1}(\lambda_{\vert \poly{i+1}\vert})wt_{L}(w[i+k+1:j-1]) - U_{k-2}(\lambda_{\vert \poly{i+1}\vert})wt_{L}(w[i+k+1:j-1])
\\= U_{k}(\lambda_{\vert \poly{i+1}\vert})wt_{L}(w[i+k+1:j-1]).\\
\end{align*}
\end{proof}

The formulation of the local weight in Lemma \ref{lem:EquivalentLocalWeight} resembles the recurrences for frieze pattern entries in Lemma \ref{lem:DivisionFreeFrieze}.
This fact is crucial for verifying that matchings with the local weighting give a combinatorial interpretation of frieze pattern entries.

\begin{theorem}\label{thm:LocalWeight}
\begin{enumerate}
    \item Let $\mathcal{F} = \{m_{i,j}\}$ be a finite frieze pattern of Type $\Lambda_{p_1,\ldots,p_s}$ and of width $n-3$ which is realizable by a dissected polygon, $P$. Let $v_1,\ldots,v_n$ be the vertices of $P$.  Then for all $i,j$ with $0 \leq j - i \leq n$ \[
    m_{i,j} = \sum_{w \in \pathset{i}{j}} wt_L(w)
    \]
    \item Let $\mathcal{F} = \{m_{i,j}\}$ be an infinite, $n$-periodic frieze pattern of Type $\Lambda_{p_1,\ldots,p_s}$ which is realizable by a dissection of $S_n$ or $A_{n,m}$ or a quotient dissection. Let $I$ be the dissected infinite strip which is the universal cover.  Let $\{v_i^k\}$ be the infinite family of vertices of $I$ which map to the vertices on the outer boundary of $S$ under the covering map. Then,\[
    m_{i,j} = \sum_{w \in \pathset{i}{j}} wt_L(w)
    \]
    where $\pathset{i}{j}$ is the set of matchings between $v_{i_n}^{\lfloor \frac{i}{n} \rfloor}$ and $v_{j_n}^{\lfloor \frac{j}{n} \rfloor}$.
\end{enumerate}
\end{theorem}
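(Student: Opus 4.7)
The plan is to prove both parts of the theorem simultaneously by induction on the vertex distance $j - i$, using the frieze recurrence from Lemma \ref{lem:DivisionFreeFrieze} together with the mirror recurrence for the local weight given by Lemma \ref{lem:EquivalentLocalWeight}. The base cases $j - i \in \{0, 1, 2\}$ verify the formula on the boundary rows and quiddity row: $\pathset{i}{i} = \emptyset$ yields an empty sum equal to $m_{i,i} = 0$; $\pathset{i}{i+1} = \{\emptyset\}$ with $wt_L(\emptyset) = 1 = m_{i,i+1}$; and for $j = i+2$ each matching is a single subgon $\poly{i+1} \in \polyset{v_{i+1}}$ of weight $\lambda_{|\poly{i+1}|}$, so the sum equals $\sum_{p \in A_{i+1}} \lambda_p = m_{i,i+2}$ by the definition of the quiddity row.

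For the inductive step with $j - i \geq 3$, I would partition $\pathset{i}{j}$ by the choice of first subgon $\poly{i+1} \in \polyset{v_{i+1}}$ and apply Lemma \ref{lem:EquivalentLocalWeight} to each $w = \poly{i+1}, \poly{i+2}, \ldots, \poly{j-1}$. Both branches of that lemma contribute the common linear term $\lambda_{|\poly{i+1}|}\, wt_L(w[i+2:j-1])$, whose total over all $w$ factors as
\[
\Bigl(\sum_{\poly{i+1} \in \polyset{v_{i+1}}} \lambda_{|\poly{i+1}|}\Bigr)\sum_{w' \in \pathset{i+1}{j}} wt_L(w') \;=\; m_{i,i+2}\sum_{w' \in \pathset{i+1}{j}} wt_L(w').
\]
The remaining correction $-wt_L(w[i+3:j-1])$ contributes only when $\poly{i+1} = \poly{i+2}$. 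The key geometric observation is that consecutive boundary vertices $v_{i+1}$ and $v_{i+2}$ share exactly one subgon, namely the unique subgon bounded by the boundary edge between them; the prohibition in Definition \ref{def:QuotientDissection} on identifying subgons that share an edge guarantees this uniqueness persists in quotient dissections. Fixing $\poly{i+1} = \poly{i+2}$ as this shared subgon and letting $w[i+3:j-1]$ range over $\pathset{i+2}{j}$ therefore gives a bijection, so the correction sum equals $\sum_{w'' \in \pathset{i+2}{j}} wt_L(w'')$. By the inductive hypothesis the overall sum is $m_{i,i+2}\,m_{i+1,j} - m_{i+2,j}$, which equals $m_{i,j}$ by Lemma \ref{lem:DivisionFreeFrieze}.

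Part (2) is handled by exactly the same argument transferred to the universal cover $I$ (or $\overline{I}$ for a quotient dissection) of Section \ref{subsec:InfStrip}. A matching between $v_{i_n}^{\lfloor i/n \rfloor}$ and $v_{j_n}^{\lfloor j/n \rfloor}$ is supported on a finite dissected polygonal region along the boundary, so the induction on $j-i$ proceeds verbatim. For quotient dissections, the identifications $\poly{1}^k \sim \poly{2}^k$ in $\overline{I}$ faithfully reflect those in $\overline{\D}$, and the local uniqueness-of-shared-subgon property near consecutive boundary vertices is preserved under the lifting, so the bijection in the inductive step remains valid.

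The main obstacle I anticipate is pinning down the unique-shared-subgon statement cleanly in all the allowed geometric settings: in quotient dissections where a vertex may meet a single identified subgon through several distinct pre-images in $\overline{I}$, and in dissections with self-folded subgons where the local picture around the edge $v_{i+1} v_{i+2}$ requires careful bookkeeping. Once that uniqueness is verified, the algebraic match between Lemmas \ref{lem:EquivalentLocalWeight} and \ref{lem:DivisionFreeFrieze} drives the induction through to completion, and no further casework is needed to distinguish the finite and infinite regimes.
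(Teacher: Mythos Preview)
Your proposal is correct and follows essentially the same approach as the paper: induction on $j-i$ with the base cases $0,1,2$, then in the inductive step combining Lemma~\ref{lem:DivisionFreeFrieze} with Lemma~\ref{lem:EquivalentLocalWeight} and the observation that the correction term $-wt_L(w[i+3:j-1])$ arises precisely when $\poly{i+1}=\poly{i+2}$ equals the unique subgon $\hat{\mathcal{P}}$ shared by $v_{i+1}$ and $v_{i+2}$. The paper organizes the same computation by first partitioning $\pathset{i}{j}$ into matchings of the form $w_h=\hat{\mathcal{P}},\hat{\mathcal{P}},h$ and their complement, then applying Lemma~\ref{lem:EquivalentLocalWeight} to each piece; your version applies the lemma uniformly and then isolates the correction, but the two are algebraically identical. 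Your explicit remark that Definition~\ref{def:QuotientDissection} forbids identifying subgons sharing an edge, and hence preserves the uniqueness of $\hat{\mathcal{P}}$ in quotient dissections, is a point the paper leaves implicit.
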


\begin{proof}
We will consider finite and infinite frieze patterns simultaneously. We prove this theorem by induction. We have that \[
\sum_{w \in \pathset{i}{i}} wt_L(w) = 0 = m_{i,i}
\]
since $\pathset{i}{i} = \emptyset$.
Similarly, $\pathset{i}{i+1} = \{\emptyset\}$, giving \[
\sum_{w \in \pathset{i}{i+1}} wt_L(w) = wt_L(\emptyset) = 1 = m_{i,i+1}.
\]
By definition, we also know that every entry in the quiddity row satisfies the formula
\[
m_{i-1,i+1} = \sum_{w\in \pathset{i-1}{i+1}}wt_L(w) = \sum_{\mathcal{P}\in \polyset{v_i}}\lambda_{|\mathcal{P}|}.
\]

Suppose we have verified the claim for all $m_{i,j}$ with $j - i < k$. If our frieze pattern is finite of width $k-4$, then for all $i$, $m_{i,i+(k-1)} = 0$. Hence, we have checked all rows of the frieze pattern and we are done. 

Now, suppose our frieze pattern is infinite or finite with width larger than $k-4$, and consider $m_{i,j}$ with $j = i + k$. By Lemma \ref{lem:DivisionFreeFrieze}, we have that $m_{i,j} = m_{i,i+2}m_{i+1,j} - m_{i+2,j}$. Thus, it suffices to prove \[
\sum_{w \in \pathset{i}{j}} wt_L(w) = \bigg(\sum_{w \in \pathset{i}{i+2}} wt_L(w)\bigg) \bigg( \sum_{w \in \pathset{i+1}{j}} wt_L(w) \bigg) - \sum_{w \in \pathset{i+2}{j}} wt_L(w)
\]
where, by induction, we already know that the right hand side is equal to $ m_{i,i+2}m_{i+1,j} - m_{i+2,j}$.

Let $\hat{\mathcal{P}}$ be the unique subgon which is incident to both $v_{i+1}$ and $v_{i+2}$. There is a bijection between the set $\pathset{i+2}{j}$ and the subset of $\pathset{i}{j}$ consisting of matchings $w = \poly{i+1},\ldots,\poly{j-1}$ with $\poly{i+1} = \poly{i+2} = \hat{\mathcal{P}}$. This bijection is given by sending $h \in \pathset{i+2}{j}$ to $w_h := \hat{\mathcal{P}}, \hat{\mathcal{P}}, h \in \pathset{i}{j}$. Let $\pathset{i}{j}' \subseteq \pathset{i}{j}$ be the complement of the set of matchings of the form $w_h$. It is possible that $\pathset{i}{j}'$ is empty. 

We break up our sum based on this partition of $\pathset{i}{j}$\[
\sum_{w \in \pathset{i}{j}} wt_L(w) = \sum_{h \in \pathset{i+2}{j}} wt_L(w_h) + \sum_{w \in \pathset{i}{j}'} wt_L(w).
\]

Lemma \ref{lem:EquivalentLocalWeight} transforms the two terms on the right hand side as \begin{equation}\label{eq:localweight1}
\sum_{h \in \pathset{i+2}{j}} wt_L(w_h) = \sum_{h \in \pathset{i+2}{j}} (\lambda_{\vert \hat{\mathcal{P}} \vert} wt_L(\hat{\mathcal{P}},h) - wt_L(h))
\end{equation}
and
\begin{equation}\label{eq:localweight2}
\sum_{w \in \pathset{i}{j}'} wt_L(w) = \bigg(\sum_{\mathcal{P} \in \pathset{i}{i+2}} wt_L(\mathcal{P}) \bigg)\bigg( \sum_{w' \in \pathset{i+1}{j}} wt_L(w')\bigg) - \sum_{h \in\pathset{i+2}{j}} \lambda_{\vert \hat{\mathcal{P}} \vert} wt_L(\hat{\mathcal{P}},h).
\end{equation}
The subtracted term in Equation \ref{eq:localweight2} is removing the contribution of any matchings of the form $w_h$ for $h \in \pathset{i+2}{j}$.

From equations \ref{eq:localweight1} and \ref{eq:localweight2}, we conclude that \[
\sum_{w \in \pathset{i}{j}} wt_L(w) = \bigg(\sum_{\mathcal{P} \in \pathset{i}{i+2}} wt_L(\mathcal{P}) \bigg)\bigg( \sum_{w' \in \pathset{i+1}{j}} wt_L(w')\bigg) - \sum_{h \in \pathset{i+2}{j}} wt_L(h).
\]
\end{proof}

In the proof of Theorem \ref{thm:LocalWeight}, if we were considering a finite frieze pattern, we stopped the induction once we reached the lower boundary row of 0's. However, if we use the recurrence in Lemma \ref{lem:DivisionFreeFrieze} instead of the diamond relation, we can extend a finite frieze pattern to an infinite frieze pattern with some negative rows, as in Example \ref{ex:InfinitePolygon}. Such infinite frieze patterns are related to \emph{$SL_2$-tilings}, as discussed in \cite{7}. 
This allows us to consider extending the inductive argument in Theorem \ref{thm:LocalWeight} in the case of a frieze pattern from a polygon.

\begin{corollary}\label{cor:InfinitePolygonMatching}
Let $\mathcal{F} = \{m_{i,j}\}$ be an \emph{infinite} frieze pattern of Type $\Lambda_{p_1,\ldots,p_s}$ from a dissection of an $n$-gon with vertices $v_1,\ldots,v_n$. Then,\[
m_{i,j} = \sum_{w \in \pathset{i}{j}} wt_L(w)
\]

where if $\vert j - i \vert \geq n$, then $\pathset{i}{j}$ is the set of matchings between $v_{i_n}$ and $v_{j_n}$ which visits vertices $v_{(i+1)_n},\ldots,v_{(j-1)_n}$.
\end{corollary}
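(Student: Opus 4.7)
The plan is to follow the same inductive strategy as the proof of Theorem \ref{thm:LocalWeight}(1), but to continue the induction past the row $j - i = n$ where the finite frieze pattern has its lower boundary of zeros. Extending the frieze pattern beyond this row via the recurrence from Lemma \ref{lem:DivisionFreeFrieze} produces the $SL_2$-tiling discussed just before the corollary, and the corollary asserts that the entries of this extension admit a matching interpretation once we allow paths that wrap cyclically around the polygon (so that vertices are indexed modulo $n$ and each $v_k$ may be visited multiple times).

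I would induct on $j - i$. The base case is Theorem \ref{thm:LocalWeight}(1), which handles $0 \leq j - i \leq n$. For the inductive step, suppose $j - i > n$ and assume the formula holds for all pairs $(i', j')$ with $j' - i' < j - i$. By Lemma \ref{lem:DivisionFreeFrieze},
\[
m_{i,j} \;=\; m_{i,i+2}\, m_{i+1,j} \;-\; m_{i+2,j},
\]
and the inductive hypothesis rewrites each of the three entries on the right as a sum of local-weighted matchings. It then suffices to establish the combinatorial identity
\[
\sum_{w \in \pathset{i}{j}} wt_L(w) \;=\; \biggl(\sum_{w \in \pathset{i}{i+2}} wt_L(w)\biggr)\biggl(\sum_{w \in \pathset{i+1}{j}} wt_L(w)\biggr) \;-\; \sum_{w \in \pathset{i+2}{j}} wt_L(w).
\]
This is shown in exactly the manner of the proof of Theorem \ref{thm:LocalWeight}: let $\hat{\mathcal{P}}$ be the unique subgon of the dissection containing the edge from $v_{(i+1)_n}$ to $v_{(i+2)_n}$, partition $\pathset{i}{j}$ according to whether the first two subgons of the matching both equal $\hat{\mathcal{P}}$, and use Lemma \ref{lem:EquivalentLocalWeight} to split $wt_L$ on matchings of the form $\hat{\mathcal{P}},\hat{\mathcal{P}},h$ into the combination $\lambda_{|\hat{\mathcal{P}}|}\, wt_L(\hat{\mathcal{P}},h) - wt_L(h)$.

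The main obstacle, though mostly a matter of careful bookkeeping, is ensuring that the matching framework continues to behave correctly when $|j - i| \geq n$, in which case paths revisit vertices and repeat subgons in non-consecutive positions. The essential observation is that the local weighting defined in Definition \ref{def:LocalWeight} depends only on runs of consecutive identical subgons, not on whether a subgon has been used earlier in the sequence; consequently the bijection between $\pathset{i+2}{j}$ and the subset of $\pathset{i}{j}$ whose first two entries are $\hat{\mathcal{P}}$ remains valid, and the algebraic identity above follows by the same manipulation as in Theorem \ref{thm:LocalWeight}.
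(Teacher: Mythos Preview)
Your proposal is correct and matches the paper's approach exactly: the paper does not give a separate proof of this corollary but simply remarks (in the paragraph immediately preceding it) that the inductive argument in Theorem~\ref{thm:LocalWeight} can be continued past the lower boundary row by using the recurrence of Lemma~\ref{lem:DivisionFreeFrieze} in place of the diamond relation. Your write-up supplies precisely those details, including the observation that the local weighting depends only on consecutive runs and is therefore unaffected by the path wrapping around the polygon.
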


\begin{example}\label{ex:InfinitePolygon}
See below for an example of an extension of a finite frieze pattern to an infinite frieze pattern. On the left we give the dissected polygon which realizes this frieze pattern.

\raisebox{-.5\totalheight}{\begin{tikzpicture}[scale = 1]
  \draw (90-72:1.5\R) node[right]{$v_1$} --  (90:1.5\R) node[above]{$v_2$} --   (90+72:1.5\R) node[left]{$v_3$}  --   (90+144:1.5\R) node[left]{$v_4$} -- (90+216:1.5\R) node[right]{$v_5$} -- (18:1.5\R);
 \node[] at (270:0.8\R){$\beta$};
 \node[] at (0:0.9\R){$\gamma$};
 \node[] at (180:0.9\R){$\alpha$};
  \draw(90+144:1.5\R) -- (90:1.5\R);
  \draw(90:1.5\R) -- (90+216:1.5\R);  
  \end{tikzpicture}} 
 \scalebox{0.75}{$\begin{array}{cccccccccccccccccccc}
   &&0&&  0&&  0&&  0&&  0&&  0&&  0&&  0&&0&\\
  &&&  1&&  1&&  1&&  1&&  1&&  1&&  1&&1&&1\\
 &&1&&3&&1&&  2&&  2&&  1&&  3&&1&&2&\\
  &&&2&&  2&&  1&& 3&&  1&&  2&&2&&1&&3\\
  &&1&&  1&&  1&&  1&&  1&&  1&&  1&&1&&1&\\
 &&&0&&  0&&  0&&  0&&  0&&  0&&  0&&  0&&0\\
  &&-1&&  -1&&  -1&&  -1&&  -1&&  -1&&  -1&&-1&&-1&\\
  &&&-2&&-2&&-1&&  -3&&-1&&-2&&-2&&-1&&-3\\
   &&-1&&-3&&-1&&-2&&-2&&-1&&-3&&-1&&-2&\\
  &&&-1&&  -1&&  -1&&  -1&&  -1&&  -1&&  -1&&-1&&-1\\ 
  &&0&&  0&&  0&&  0&&  0&&  0&&  0&&  0&&0&\\
   &&&&& && && && \vdots&& && &&&&\\  
 \end{array}$}
 
 As an example of Corollary \ref{cor:InfinitePolygonMatching}, note that all entries $m_{i,i+6} = -1$. We look at the case $i=2$; that is, $m_{2,8} = \sum_{w \in \pathset{2}{8}} wt_L(w)$.  The set $\pathset{2}{8}$ consists of matchings of the form $\poly{3},\poly{4},\poly{5},\poly{6},\poly{7}$, where $\poly{i}$ is incident to $v_{i_n}$. Amongst the 12 matchings in $\pathset{2}{8}$, $wt_L(\alpha\beta\gamma\gamma\gamma) = -1$ and all other matchings have weight 0. Recall that $\lambda_3 = 1$, $U_2(1) = 0$ and $U_3(1) = -1$. 
 
 For an example with more nonzero matchings, we compute $\sum_{w \in \pathset{4}{10}} wt_L(w) = -1$.
 
 \begin{center}
 \begin{tabular}{c|c||c|c}
     $w$ & $wt_L(w)$& $w$ & $wt_L(w)$ \\\hline
    $\beta\gamma\gamma\alpha\alpha$  & 0 & $\gamma\gamma\gamma\alpha\alpha$  & 0\\
    $\beta\gamma\beta\alpha\alpha$  & 0 & $\gamma\gamma\beta\alpha\alpha$  & 0\\
    $\beta\gamma\gamma\alpha\beta$  & 0 & $\gamma\gamma\gamma\alpha\beta$  & $-1$\\ $\beta\gamma\alpha\alpha\alpha$  & $-1$ & $\gamma\gamma\alpha\alpha\alpha$  & 0\\ 
    $\beta\gamma\beta\alpha\beta$  & 1 & $\gamma\gamma\beta\alpha\beta$  & 0\\
    $\beta\gamma\alpha\alpha\beta$  & 0 & $\gamma\gamma\alpha\alpha\beta$  & 0\\ 
 \end{tabular}
 \end{center}
\end{example}

\subsection{Traditional Weighting}

We now have one combinatorial interpretation for entries of a realizable frieze pattern of Type $\Lambda_{p_1,\ldots,p_s}$. However, there are some matchings $w$ such that $wt_L(w) < 0$, even though the frieze pattern entries appear to be all positive. We introduce another weighting, $wt_T$, which satisfies $wt_T(w) \geq 0$ for all matchings $w$.

\begin{definition}[Traditional Weighting]\label{def:TradWeight}
Consider a dissected polygon, with vertices $v_1,\ldots,v_n$. Let $w = (\poly{i+1},\ldots,\poly{j-1})$ be a matching between vertices $v_i$ and $v_j$. For all $\mathcal{P}$ in the dissection, let $k_{\mathcal{P}}$ be the number of times that $\mathcal{P}$ is used in $w$. If for any $\mathcal{P}$, $k_{\mathcal{P}} > \vert \mathcal{P} \vert - 2$, then $wt_T(w) = 0$. Otherwise, we set\[
wt_T(w) = \prod_{\mathcal{P} \in w} U_{k_{\mathcal{P}}} (\lambda_{\vert \mathcal{P} \vert})
\]
\end{definition}

We define this weighting on a dissected polygon; when working with a dissection of $A_{n,m}$ or $S_n$, we will calculate the weight of a matching from the universal cover. 

This weighting scheme is ``traditional'' in the sense that, in the case of triangulations, it forbids using a triangle more than one time in a matching, as was the case in \cite{5} and \cite{1}. In the case of dissections, Bessenrodt also only permitted using a $p$-gon up to $p-2$ times in a matching in \cite{4}. This weighting differs from Bessenrodt's though, as she used formal variables and did not use Chebyshev polynomials to track using a subgon multiple times. 

\begin{remark}
We do not define traditional weighting on a quotient dissection. We do not yet know a consistent way to weight matchings in a quotient dissection, other than with $wt_L$, which would give an interpretation of the entries of the corresponding frieze pattern. In particular, we cannot verify that all frieze patterns from quotient dissections have positive entries; see Conjecture \ref{conj:Positive}.
\end{remark}

See Example \ref{ex:Matchings} and Figure \ref{fig:WeightingExample} for sample calculations of $wt_T$. 

\begin{example}\label{ex:Matchings}
In Figure \ref{fig:WeightingExample}, we exhibit the three weightings, $wt_L, wt_T, wt_A$ on matchings in $\pathset{0}{4}$ from the following dissection. See Definitions \ref{def:LocalWeight}, \ref{def:TradWeight}, and \ref{def:AnnulusWeight} respectively. 

\begin{center}
\begin{tikzpicture}[scale = 1]
 \draw (0,0) circle (2\R);
\draw (0,0) circle (.5\R);
\node[scale = 0.8, left] at (120:2.1\R){$v_1$};
\node[scale = 0.8, left] at (180:2\R){$v_2$};
\node[scale = 0.8,left] at (240:2\R){$v_3$};
 \coordinate(A) at (120:2\R);
 \coordinate(B) at (240:2\R);
 \coordinate(C) at (90:.5\R);
 \coordinate(D) at (270:.5\R);
\draw (B) to [out = 100, in = -100] (A);
\draw(A)--(C);
\draw(B) -- (D);
\draw(B) to [out = 30, in = 0, looseness = 3] (C);
\node[circle, fill = black, scale = 0.3] at (180:2\R){};
\node[circle, fill = black, scale = 0.3] at (0:0.5\R){};
\node[] at (160:1.7\R){$\alpha$};
\node[] at (200:.9\R){$\beta$};
\node[] at (0:.9\R){$\gamma$};
\node[] at (75:1.5\R){$\delta$};
  \end{tikzpicture}
\end{center}

We draw the universal cover of this dissection in Figure \ref{fig:WeightingExample}. As an example of our calculations, compare how we calculate $wt_L(\beta\alpha\beta)$, \[
wt_L(\beta\alpha\beta) = U_1(\lambda_{\vert \beta \vert}) U_1(\lambda_{\vert \alpha \vert}) U_1(\lambda_{\vert \beta \vert}) = \sqrt{2} \cdot 1 \cdot \sqrt{2} = 2
\]
with how we compute $wt_T(\beta\alpha\beta)$
\[wt_T(\beta\alpha\beta) = U_2(\lambda_{\vert \beta \vert}) U_1(\lambda_{\vert \alpha \vert}) = (2 - 1)\cdot 1 = 1
\]

Note that, while individual matchings have different weights, $\sum_{w \in \pathset{0}{4}} wt_L(w) = \sum_{w\in\pathset{0}{4}} wt_T(w)$.  We prove in Theorem \ref{thm:EqualWeights} that this will always occur.

 For an example where $wt_T(w)$ and $wt_A(w)$ are distinct, consider the matching $w = \delta\alpha\delta$. We see that $wt_T(\delta\alpha\delta) = U_1(\lambda_{\vert \delta \vert}) U_1(\lambda_{\vert \delta \vert}) U_1(\lambda_{\vert \alpha \vert}) = 1$ since $v_1^0$ and $v_3^0$ are incident to different lifts of $\delta$ while $wt_A(\delta\alpha\delta) = 0$. 

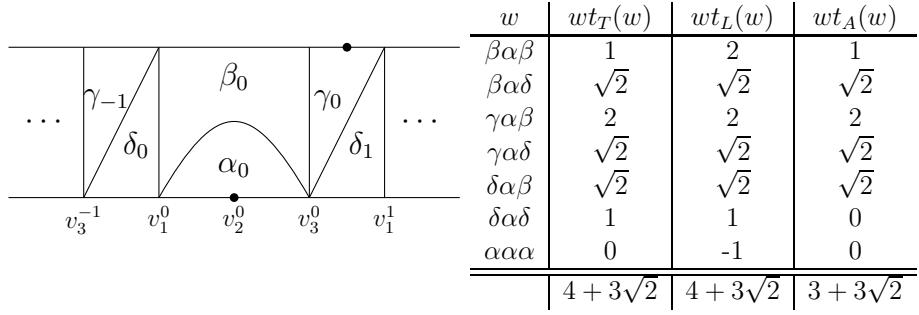
\begin{figure}
    \centering
    \raisebox{-.4\totalheight}{\begin{tikzpicture}[scale = 1]
 \draw(-1,0) -- (5,0);
 \draw(-1,2) -- (5,2);
 \draw (0,0) -- (1,2);
 \draw(1,0) -- (1,2);
 \draw(3,0) -- (3,2);
 \draw(3,0) -- (4,2);
 \draw(4,0) -- (4,2);
 \node[circle, fill = black, scale = 0.3] at (3.5,2){};
 \node[circle, fill = black, scale = 0.3] at (2,0){};
 \draw(0,0) -- (0,2);
 \coordinate(A) at (1,0);
 \coordinate(B) at (3,0);
 \draw (A) to [out = 60, in = 120, looseness = 2] (B);
 \node[below, scale = 0.8] at (0,0) {$v_3^{-1}$};
 \node[below, scale = 0.8] at (1,0) {$v_1^0$};
 \node[below, scale = 0.8] at (2,0) {$v_2^0$};
 \node[below, scale = 0.8] at (3,0) {$v_3^0$};
 \node[below, scale = 0.8] at (4,0) {$v_1^1$};
 \node[] at (2,0.4){$\alpha_0$};
 \node[] at (2,1.6){$\beta_0$};
 \node[] at (0.7,0.7) {$\delta_0$};
 \node[] at (3.7,0.7) {$\delta_1$};
 \node[] at (3.25,1.3) {$\gamma_0$};
 \node[] at (0.3,1.3) {$\gamma_{-1}$};
 \node[] at (-0.5,1) {$\cdots$};
 \node[] at (4.5,1) {$\cdots$};
  \end{tikzpicture}} 
 \scalebox{0.85}{\begin{tabular}{c|c|c|c}
        $w$  & $wt_T(w)$ & $wt_L(w)$ & $wt_A(w)$  \\ \hline
        $\beta \alpha \beta$  & 1 & 2 & 1\\
        $\beta \alpha \delta$  & $\sqrt{2}$  & $\sqrt{2}$ & $\sqrt{2}$\\
        $\gamma \alpha \beta$  & 2 & 2 & 2 \\
        $\gamma \alpha \delta$  & $\sqrt{2}$ & $\sqrt{2}$& $\sqrt{2}$\\
        $\delta \alpha \beta$  & $\sqrt{2}$ & $\sqrt{2}$& $\sqrt{2}$ \\
        $\delta \alpha \delta$  & 1 & 1 & 0 \\
        $\alpha \alpha \alpha$  & 0 & -1 & 0 \\\hline\hline
        & $4 + 3\sqrt{2}$ & $4 + 3\sqrt{2}$ & $3 + 3\sqrt{2}$\\
     \end{tabular}}
    \caption{Exhibiting the three weightings of matchings for the set of matchings between $v_0$ and $v_4$}
    \label{fig:WeightingExample}
\end{figure}

\end{example}

 The following well-known identity of Chebyshev polynomials will be useful for proving our next main result. 

\begin{lemma}\label{lem:ChebyshevProduct}
For all $m,n \geq -1$,
$U_{m+1}(x)U_{n+1}(x) - U_m(x)U_n(x) = U_{m+n+2}(x)$
\end{lemma}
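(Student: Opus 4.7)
The plan is to fix $m \geq -1$ and prove the identity by induction on $n$, treating both sides as sequences in $n$ that obey a common three-term recurrence. Concretely, define
\[
L_n(x) := U_{m+1}(x)\,U_{n+1}(x) - U_m(x)\,U_n(x), \qquad R_n(x) := U_{m+n+2}(x),
\]
for $n \geq -1$. I will show $L_n = R_n$ for all $n \geq -1$.

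First I would check the two base cases directly from the initial conditions $U_{-1}=0$, $U_0 = 1$, $U_1(x) = x$. At $n = -1$, $L_{-1} = U_{m+1}(x)\cdot 1 - U_m(x)\cdot 0 = U_{m+1}(x) = R_{-1}$. At $n = 0$, $L_0 = xU_{m+1}(x) - U_m(x)$, which equals $U_{m+2}(x) = R_0$ by the Chebyshev recurrence in Definition \ref{def:Chebyshev}.

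Next I would show that both $L_n$ and $R_n$ satisfy the same recurrence $a_n = x\,a_{n-1} - a_{n-2}$ in the index $n$. For $R_n$, this is immediate: $U_{m+n+2}(x) = x U_{m+n+1}(x) - U_{m+n}(x)$ is just Definition \ref{def:Chebyshev} applied at index $m+n+2$. For $L_n$, the verification is a direct calculation using linearity:
\[
xL_{n-1} - L_{n-2} = U_{m+1}(x)\bigl(xU_n(x)-U_{n-1}(x)\bigr) - U_m(x)\bigl(xU_{n-1}(x)-U_{n-2}(x)\bigr) = U_{m+1}(x)U_{n+1}(x) - U_m(x)U_n(x) = L_n,
\]
where I used the Chebyshev recurrence twice (once for $U_{n+1}$ and once for $U_n$). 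Since $L_n$ and $R_n$ satisfy the same second-order linear recurrence and agree at $n = -1, 0$, they agree for all $n \geq -1$.

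The routine part is the two base cases, and the only content beyond bookkeeping is the small manipulation showing $L_n$ satisfies the Chebyshev recurrence in $n$; I do not anticipate any real obstacle, since both sides are transparently built from the same second-order recursion. An alternative, even shorter route would be to invoke the trigonometric identification $U_k(2\cos\theta) = \sin((k+1)\theta)/\sin\theta$ and reduce the identity to $\sin((m+2)\theta)\sin((n+2)\theta) - \sin((m+1)\theta)\sin((n+1)\theta) = \sin\theta\,\sin((m+n+3)\theta)$, which follows from two applications of the product-to-sum formula; but the polynomial induction above is self-contained and fits the notation of Section \ref{sec:FPType}.
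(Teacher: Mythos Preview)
Your proof is correct. The paper itself does not supply a proof of this lemma; it simply records it as a ``well-known identity of Chebyshev polynomials'' and moves on. Your induction on $n$ (with $m$ fixed), using that both sides satisfy the same second-order recurrence and agree at $n=-1,0$, is a clean self-contained verification and fits the paper's conventions in Definition~\ref{def:Chebyshev}.
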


We frame Theorem \ref{thm:EqualWeights} in the context of dissected polygons. If we are working with an annulus or once-punctured disc, the set of subgons between any two vertices in the universal cover forms a dissected polygon. 

\begin{theorem}\label{thm:EqualWeights}
Let $P$ be a polygon with dissection $\D$. Let $v_i,v_j$ be two vertices of $P$, with $v_j$ following $v_i$ in counterclockwise order. Then,
\[
\sum_{w \in \pathset{i}{j}} wt_L(w) = \sum_{w \in \pathset{i}{j}} wt_T(w).
\]
\end{theorem}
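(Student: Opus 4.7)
The plan is to show directly that $T(i,j) := \sum_{w \in \pathset{i}{j}} wt_T(w)$ equals the frieze entry $m_{i,j}$, and then to combine with Theorem~\ref{thm:LocalWeight} (applied to the polygon $P$), which gives $\sum_{w \in \pathset{i}{j}} wt_L(w) = m_{i,j}$, to conclude. I proceed by strong induction on $j-i$, paralleling the proof of Theorem~\ref{thm:LocalWeight}. The base cases $j-i \in \{0,1,2\}$ use only the conventions $\pathset{i}{i} = \emptyset$, $\pathset{i}{i+1} = \{\emptyset\}$ with $wt_T(\emptyset) = 1$, and the single-subgon evaluation $wt_T((\mathcal{P})) = \lambda_{|\mathcal{P}|}$; these immediately give $T(i, i+\ell) = m_{i, i+\ell}$ for $\ell \in \{0,1,2\}$.

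For the inductive step I verify that $T$ satisfies the three-term recurrence
\[
T(i,j) \;=\; m_{i,i+2}\,T(i+1,j) - T(i+2,j)
\]
from Lemma~\ref{lem:DivisionFreeFrieze}. Each matching $w \in \pathset{i}{j}$ is uniquely of the form $w = (\mathcal{P}, w')$ with $\mathcal{P} \ni v_{i+1}$ and $w' \in \pathset{i+1}{j}$, so $T(i,j) = \sum_{\mathcal{P},\, w'} wt_T((\mathcal{P}, w'))$. Writing $k = k_\mathcal{P}(w')$ for the multiplicity of $\mathcal{P}$ in $w'$, the Chebyshev recurrence $\lambda_{|\mathcal{P}|}\,U_k(\lambda_{|\mathcal{P}|}) = U_{k+1}(\lambda_{|\mathcal{P}|}) + U_{k-1}(\lambda_{|\mathcal{P}|})$ suggests the identity
\[
\lambda_{|\mathcal{P}|}\,wt_T(w') - wt_T\bigl((\mathcal{P},w')\bigr) \;=\; wt_T^{\mathcal{P}-}(w'),
\]
where $wt_T^{\mathcal{P}-}(w')$ denotes the $wt_T$-type product with the multiplicity of $\mathcal{P}$ decreased by one. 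Summing over $(\mathcal{P}, w')$ reduces the recurrence-to-be-proved to the combinatorial identity
\[
\sum_{\mathcal{P} \ni v_{i+1}} \sum_{w' \in \pathset{i+1}{j}} wt_T^{\mathcal{P}-}(w') \;=\; T(i+2, j).
\]

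To verify the combinatorial identity, I further decompose $w' = (\mathcal{Q}, w'')$ with $\mathcal{Q} \ni v_{i+2}$ and $w'' \in \pathset{i+2}{j}$. The \emph{diagonal} case $\mathcal{P} = \mathcal{Q}$ forces $\mathcal{P}$ to be the unique subgon $\hat{\mathcal{P}}$ containing the boundary edge $v_{i+1}v_{i+2}$; modulo cap corrections, this case contributes $\sum_{w''} wt_T(w'') = T(i+2,j)$. The \emph{off-diagonal} pairs $\mathcal{P} \neq \mathcal{Q}$ must be shown to cancel against the aforementioned cap corrections, producing no net contribution to the recurrence.

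The main obstacle is the cap in Definition~\ref{def:TradWeight}: the rule $wt_T(w) = 0$ whenever some $k_\mathcal{P}(w)$ exceeds $|\mathcal{P}|-2$ breaks the Chebyshev identity displayed above at the boundary $k = |\mathcal{P}|-1$, since the truncation $\tilde{U}_k(\lambda_{|\mathcal{P}|}) := U_k(\lambda_{|\mathcal{P}|}) \cdot \mathbf{1}[k \leq |\mathcal{P}|-2]$ does not itself satisfy the Chebyshev three-term recurrence at that boundary. Reconciling the cap with the displayed identity requires tracking the boundary terms generated by the cap on the diagonal side and matching them exactly against the off-diagonal cross-terms. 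The key tools for this bookkeeping are the vanishing $U_{|\mathcal{P}|-1}(\lambda_{|\mathcal{P}|}) = 0$ from Lemma~\ref{lem:ChebyPeriodic} (together with the value $U_{|\mathcal{P}|-2}(\lambda_{|\mathcal{P}|}) = 1$) and the product identity of Lemma~\ref{lem:ChebyshevProduct}; together these ensure that the naive-versus-capped discrepancies precisely balance the off-diagonal contributions, so that the recurrence for $T$ holds on the nose.
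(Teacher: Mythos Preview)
Your strategy—prove $T(i,j) := \sum_w wt_T(w) = m_{i,j}$ directly by verifying the recurrence of Lemma~\ref{lem:DivisionFreeFrieze}, then invoke Theorem~\ref{thm:LocalWeight}—is different from the paper's. The paper instead inducts on the number of arcs of $\D$ with both endpoints strictly between $v_i$ and $v_j$: when that number is zero it shows $wt_L(w) = wt_T(w)$ for every individual matching, and in the inductive step it picks an ear $Q_0$ between $v_i$ and $v_j$ and proves (via Lemma~\ref{lem:ChebyshevProduct}) that both $\sum wt_L$ and $\sum wt_T$ are unchanged by cutting $Q_0$. That ear-cutting argument works because it is local to a single subgon: each application touches only the two Chebyshev factors attached to $Q_0$ and its neighbour $Q_1$.

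Your route, by contrast, has not been completed. The whole difficulty is exactly the step you flag but do not carry out. The cap in Definition~\ref{def:TradWeight} breaks the identity $\lambda_{|\mathcal{P}|}\,wt_T(w') - wt_T((\mathcal{P},w')) = wt_T^{\mathcal{P}-}(w')$ precisely at multiplicity $k_{\mathcal{P}}(w') = |\mathcal{P}|-1$, producing an error term for every pair $(\mathcal{P},w')$ in which $\mathcal{P}$ is saturated in $w'$; meanwhile the off-diagonal pairs $(\mathcal{P},\mathcal{Q})$ with $\mathcal{P}\neq\mathcal{Q}$ contribute $wt_T^{\mathcal{P}-}((\mathcal{Q},w''))$, which is nonzero whenever $\mathcal{P}$ also occurs in $w''$. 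You assert that these two families cancel, citing Lemmas~\ref{lem:ChebyPeriodic} and~\ref{lem:ChebyshevProduct}, but you give no mechanism. Neither lemma yields it directly: the off-diagonal side ranges over all $\mathcal{P}\ni v_{i+1}$ and all $\mathcal{Q}\ni v_{i+2}$, while the cap side encodes a geometric condition (all vertices of $\mathcal{P}$ lying in $[i+1,j-1]$) that you never tie to the $(\mathcal{P},\mathcal{Q})$ pairing. The proof of Theorem~\ref{thm:LocalWeight} transplants cleanly only because $wt_L$ factorises along prefixes; $wt_T$ does not, and the bookkeeping needed to compensate for that non-locality is the substance of the theorem, not a detail. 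As written, the proposal stops at the point where the real argument would have to begin.
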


\begin{proof}
Let $v_i,v_j$ be vertices of our polygon, $P$, with $i < j$ so that $v_j$ follows $v_i$ in counterclockwise order. We will induct on the number of arcs with both end points strictly between $v_i$ and $v_j$ when traveling counterclockwise. Call the number of such arcs $k$. 

First, consider the case where $k = 0$. Then, we claim that for all matchings $w \in \pathset{i}{j}$, $wt_L(w) = wt_T(w)$. In this case, it is not possible that we use a $p$-subgon $p$ times since there are no subgons with all vertices strictly between $i$ and $j$. It is possible we could choose a $p$-subgon $p-1$ times, but such a matching would have weight 0 under both weightings. Moreover, since there is no subgon contained strictly between $i$ and $j$,  we cannot have a matching which uses the same subgon non-contiguously. Thus, for any matching $w \in \pathset{i}{j}$, $wt_L(w) = wt_T(w)$.

Now suppose we have $k > 0$ arcs which have both end points between $i$ and $j$. In this case, there will be at least one ear between $i$ and $j$. Pick an ear and call it $Q_0$. Let $(a_0,b_0)$ be the unique non-boundary edge from $\D$ which is a side of $Q_0$. Let $Q_1$ be the other subgon which has $(a_0,b_0)$ as one of its sides. 

 \begin{center}
\begin{tikzpicture}[scale = 2]
\draw (180:1.5\R) -- (200:1.5\R) -- (220:1.5\R) -- (240:1.5\R) -- (260:1.5\R) -- (280:1.5\R) -- (300:1.5\R) -- (320:1.5\R) -- (340:1.5\R) -- (0:1.5\R);
\draw (180:1.5\R) -- (340:1.5\R);
\draw (240:1.5\R) -- (320:1.5\R);
\draw(200:1.5\R) -- (320:1.5\R);
\node[] at (270:1.32\R){$Q_0$};
\node[] at (240:1.1\R){$Q_1$};
\node[left] at (200:1.5\R){$v_i = a_1$};
\node[right] at (340:1.5\R) {$v_j $};
\node[below] at (240:1.5\R) {$a_0$};
\node[right] at (320:1.5\R){$b_0 = b_1$};
\draw[xshift = 5\R] (180:1.5\R) -- (200:1.5\R) -- (220:1.5\R) -- (240:1.5\R) -- (320:1.5\R) -- (340:1.5\R) -- (0:1.5\R);
\draw[xshift = 5\R] (240:1.5\R) -- (320:1.5\R);
\draw[xshift = 5\R] (200:1.5\R) -- (320:1.5\R);
\draw[xshift = 5\R] (180:1.5\R) -- (340:1.5\R);
\node[] at (-12:4.7\R){$Q_1$};
\node[] at (-8:3.5\R){$v_i$};
\node[] at (-19:4.5\R) {$a_0$};
\node[] at (-9:6.4\R){$b_0$};
\node[] at (-5:6.6\R) {$v_j$};
\node[] at (-5:2.5\R) {$\xrightarrow[]{\text{cut } Q_0}$};
\end{tikzpicture}
\end{center}

We partition $\pathset{i}{j}$ based on the choices at $a_0$ and $b_0$. Let $\pathset{i}{j}^{x,y}$ be the subset of $\pathset{i}{j}$ with $\poly{a_0} = Q_x$ and $\poly{b_0} = Q_y$ for $x,y \in \{0,1\}$. We let $\pathset{i}{j}'$ be the complement of $\pathset{i}{j}^{0,0} \cup \pathset{i}{j}^{1,0} \cup \pathset{i}{j}^{0,1} \cup \pathset{i}{j}^{1,1}$ in $\pathset{i}{j}$; in some cases, $\pathset{i}{j}' = \emptyset$. 

Let $\overline{\pathset{i}{J}}$ be the set of matchings in $\overline{P}$, the dissected polygon resulting from cutting $Q_0$ from $P$. 

First, we claim that \begin{equation}\label{eq:CutTradWeight} \sum_{w \in \pathset{i}{j}} wt_T(w) = \sum_{w \in \overline{\pathset{i}{j}}} wt_T(w)
\end{equation}

Since $Q_0$ has $\vert Q_0 \vert -2$ vertices which are not incident  to any other subgon, a matching $w = \poly{i+1}, \ldots, \poly{j-1} \in \pathset{i}{j}$ has $wt_T(w) \neq 0$ only if $w \in \pathset{i}{j}^{1,1}$ or $w \in \pathset{i}{j}'$. Thus, the contribution of $Q_0$ to any nonzero matching is $U_{\vert Q_0 \vert - 2}(\lambda_{\vert Q_0 \vert}) = 1$. This observation provides a weighted (under $wt_T$) bijection between matchings in $\pathset{i}{j}$ which use $Q_0$ exactly $\vert Q_0 \vert - 2$ times and matchings in $\overline{\pathset{i}{j}}$. 

Second, we claim that \begin{equation}\label{eq:CutLocalWeight} \sum_{w \in \pathset{i}{j}} wt_L(w) = \sum_{w \in \overline{\pathset{i}{j}}} wt_L(w)
\end{equation}

 In the following, we will conflate a vertex $v_k$ with its index $k$. For further ease of notation, we will assume that $1 \leq i < j \leq \vert P \vert$. Let $a_1'$ (respectively $b_1'$) be the  first vertex clockwise (counterclockwise) of $a_0$ ($b_0$) which is incident to an arc from $\D$ besides the arc $(a_0,b_0)$. Note that if $a_0$ is incident to multiple arcs in $\D$, then $a_0 = a_1'$, and similarly for $b_1'$. We set $a_1 = a_1'$ if $a_1'$ counterclockwise from $i$ (that is, $a_1' \geq i$); otherwise we set $a_1 = i$. Let $a = a_1 - a_0$. Similarly, set $b_1 = b_1'$ if $b_1'$ is clockwise from $j$ ($b_1' \leq j$) and set $b_1 = j$ otherwise. Let $b = b_1 - b_0$. Note that we must choose $Q_1$ at all vertices strictly between $a_1$ and $a_0$ and at all vertices strictly between $b_0$ and $b_1$, if any such vertices exist.

In order for $w \in \pathset{i}{j}$ to have $wt_L(w) \neq 0$, it must be that $w \in \pathset{i}{j}^{0,0} \cup \pathset{i}{j}^{1,1} \cup \pathset{i}{j}'$. We sum over matchings in each of these subsets. First we just consider matchings in $\pathset{i}{j}^{0,0}$ and $\pathset{i}{j}^{1,1}$.  

\begin{align*}
\sum_{w \in \pathset{i}{j}^{0,0}} wt_L(w) = U_{\vert Q_0 \vert}(\lambda_{\vert Q_0 \vert}) U_a(\lambda_{\vert Q_1 \vert})U_b(\lambda_{\vert Q_1 \vert})\sum_{w \in \pathset{i}{a_1+1}} wt_L(w) \sum_{w \in \pathset{b_1-1}{j}} wt_L(w)\\
= -U_a(\lambda_{\vert Q_1 \vert})U_b(\lambda_{\vert Q_1 \vert})\sum_{w \in \pathset{i}{a_1+1}} wt_L(w) \sum_{w \in \pathset{b_1-1}{j}} wt_L(w)
\end{align*}

\begin{align*}
\sum_{w \in \pathset{i}{j}^{1,1}} wt_L(w) = U_{\vert Q_0 \vert - 2 }(\lambda_{\vert Q_0 \vert}) U_{a+1}(\lambda_{\vert Q_1 \vert})U_{b+1}(\lambda_{\vert Q_1 \vert})\sum_{w \in \pathset{i}{a_1+1}} wt_L(w) \sum_{w \in \pathset{b_1-1}{j}} wt_L(w)\\
= U_{a+1}(\lambda_{\vert Q_1 \vert})U_{b+1}(\lambda_{\vert Q_1 \vert})\sum_{w \in \pathset{i}{a_1+1}} wt_L(w) \sum_{w \in \pathset{b_1-1}{j}} wt_L(w)
\end{align*}

From Lemma \ref{lem:ChebyshevProduct}, we conclude that \[
\sum_{w \in \pathset{i}{j}^{0,0} \cup \pathset{i}{j}^{1,1}} wt_L(w) = U_{a+b+2}(\lambda_{\vert Q_1 \vert})\sum_{w \in \pathset{i}{a_1+1}} wt_L(w) \sum_{w \in \pathset{b_1-1}{j}} wt_L(w) = \sum_{w \in \overline{\pathset{i}{j}^{1,1}}} wt_L(w)
\]
where $\overline{\pathset{i}{j}^{1,1}}$ is the set of matchings between $v_i$ and $v_j$ in $\overline{P}$ which have $\poly{a_0} = \poly{b_0} = Q_1$. 

If $w \in \pathset{i}{j}'$, then $w[a_0:a_0] \neq Q_0$ and $w[b_0:b_0] \neq Q_0$, so $k_{Q_0} = \vert Q_0 \vert - 2$, and $Q_0$ contributes $1$ to $wt_L(w)$. Moreover, since $w[a_0:a_0] \neq Q_1$ and $w[b_0:b_0] \neq Q_1$, $w[a_0:a_0]$ and $w[b_0:b_0]$ cannot be the same subgon (when viewing the matching in the infinite strip). Thus, the local weight of the image of $w$ after cutting $Q_0$ will be the same as the local weight of $w$. We can conclude that \[
\sum_{w \in \pathset{i}{j}'} wt_L(w) = \sum_{w \in \overline{\pathset{i}{j}'}} wt_L(w)
\]

By proving equations \ref{eq:CutTradWeight} and \ref{eq:CutLocalWeight}, we have shown that summing over all matchings with either weighting is unaffected by cutting ears. Thus, we can always reduce to the case where there is no ear between $v_i$ and $v_j$, and in this case, we know the sums of the two weightings are equal. 
\end{proof}

An immediate corollary of Theorem \ref{thm:EqualWeights} is that our frieze pattern entries are given by summing over matchings using the traditional weight.

\begin{corollary}\label{cor:TraditionalWeight}
\begin{enumerate}
    \item Let $\mathcal{F} = \{m_{i,j}\}$ be a finite frieze pattern of Type $\Lambda_{p_1,\ldots,p_s}$ and of width $n+3$ which is realizable by a dissected polygon, $P$. Let $v_1,\ldots,v_n$ be the vertices of $P$.  Then,\[
    m_{i,j} = \sum_{w \in \pathset{i}{j}} wt_T(w)
    \]
    \item Let $\mathcal{F} = \{m_{i,j}\}$ be an infinite, $n$-periodic frieze pattern of Type $\Lambda_{p_1,\ldots,p_s}$ which is realizable by a dissection of $S_n$ or $A_{n,m}$. Let $I$ be the dissected infinite strip which is the universal cover.  Let $\{v_i^k\}$ be the infinite family of vertices of $I$ which map to the vertices on the outer boundary of $S$ under the covering map. Then,\[
    m_{i,j} = \sum_{w \in \pathset{i}{j}} wt_T(w)
    \]
    where $\pathset{i}{j}$ is the set of matchings between $v_{i_n}^{\lfloor \frac{i}{n} \rfloor}$ and $v_{j_n}^{\lfloor \frac{j}{n} \rfloor}$.
\end{enumerate}
\end{corollary}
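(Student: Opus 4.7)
The plan is to deduce this corollary immediately from the two main results already established: Theorem \ref{thm:LocalWeight} provides the combinatorial interpretation using the local weighting $wt_L$, and Theorem \ref{thm:EqualWeights} asserts that summing $wt_L$ and summing $wt_T$ over the set $\pathset{i}{j}$ of matchings in a dissected polygon give the same answer. Chaining these two facts is the whole content of the argument; there is nothing new to prove.

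For part (1), where $\F$ comes from a dissected polygon $P$, I would simply write
\[
m_{i,j} \;=\; \sum_{w \in \pathset{i}{j}} wt_L(w) \;=\; \sum_{w \in \pathset{i}{j}} wt_T(w),
\]
where the first equality is Theorem \ref{thm:LocalWeight}(1) and the second is Theorem \ref{thm:EqualWeights} applied to the subpolygon of $P$ cut out by the boundary arc from $v_i$ to $v_j$ (counterclockwise) and the chord $(v_i,v_j)$.

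For part (2), where $\F$ comes from a dissection of $S_n$ or $A_{n,m}$, the matchings live in the universal cover $I$. The key observation is that any matching $w \in \pathset{i}{j}$ between $v_{i_n}^{\lfloor i/n \rfloor}$ and $v_{j_n}^{\lfloor j/n \rfloor}$ uses only subgons lying in the finite portion of $I$ strictly between these two vertices. This finite portion, together with its induced dissection, is itself a dissected polygon in the sense of Theorem \ref{thm:EqualWeights}, so that theorem applies verbatim. Combining with Theorem \ref{thm:LocalWeight}(2) yields the desired identity. Note that the quotient dissection case is intentionally omitted, since $wt_T$ is not defined there (see the remark after Definition \ref{def:TradWeight}).

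There is no main obstacle here; the work was done in Theorems \ref{thm:LocalWeight} and \ref{thm:EqualWeights}. The only minor point deserving care in the write-up is the passage from the universal cover to a finite dissected polygon in part (2), but this is precisely the justification already invoked in the statement of Theorem \ref{thm:EqualWeights}: ``If we are working with an annulus or once-punctured disc, the set of subgons between any two vertices in the universal cover forms a dissected polygon.'' So the proof is a two-line chain of equalities in each case.
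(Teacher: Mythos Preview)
Your proposal is correct and matches the paper's approach exactly: the paper states this as an immediate corollary of Theorem~\ref{thm:EqualWeights}, which together with Theorem~\ref{thm:LocalWeight} gives the two-line chain of equalities you describe. Your remark about passing to a finite dissected polygon in the universal cover is precisely the justification the paper offers in framing Theorem~\ref{thm:EqualWeights}.
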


\section{Growth Coefficients}\label{sec:GrCo}

Baur, Fellner, Parsons, and Tschabold studied the rate of growth of entries in an infinite frieze pattern \cite{2}. Specifically, they found that in an infinite frieze pattern with rows of period $n$, for every $k \in \mathbb{Z}_{\geq 1}$ the difference between row $kn$ and row $kn - 2$ along the columns is constant. 

\begin{theorem}[Growth Coefficients \cite{2}] \label{thm:GrowthCoefficient}
Given an infinite frieze pattern $\mathcal{F} = \{m_{i,j}\}$ with period $n$, for any $k \geq 1$, $m_{i,i+kn + 1} - m_{i+1, i+kn}$ is constant for all $i$. 
\end{theorem}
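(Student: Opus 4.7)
The plan is to repackage the growth-coefficient identity as the constancy of the trace of a certain $2\times 2$ matrix built from the quiddity row, and then prove that constancy by exhibiting $n$-periodicity as a conjugation symmetry. From the dual form of Lemma~\ref{lem:DivisionFreeFrieze}, $m_{i,j+1} = m_{j-1,j+1}\,m_{i,j} - m_{i,j-1}$, so if I set
\[
\eta_j := \begin{pmatrix} m_{j-2,j} & -1 \\ 1 & 0 \end{pmatrix},
\]
propagation one step down along a SE diagonal rewrites as $\begin{pmatrix} m_{i,j+1} \\ m_{i,j}\end{pmatrix} = \eta_{j+1}\begin{pmatrix} m_{i,j} \\ m_{i,j-1}\end{pmatrix}$. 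Each $\eta_j$ has determinant $1$, and iterating $kn$ times yields $N_j^{(k)} := \eta_{j+kn}\eta_{j+kn-1}\cdots \eta_{j+1}$, which sends $(m_{i,j},\,m_{i,j-1})^{T}$ to $(m_{i,j+kn},\,m_{i,j+kn-1})^{T}$.

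The first key step is the conjugation: since the quiddity row is $n$-periodic, $\eta_{t+n} = \eta_t$ for every $t$, hence
\[
N_{j+1}^{(k)} \;=\; \eta_{j+kn+1}\,\eta_{j+kn}\cdots \eta_{j+2} \;=\; \eta_{j+1}\,N_j^{(k)}\,\eta_{j+1}^{-1}.
\]
So the matrices $N_j^{(k)}$ are pairwise conjugate as $j$ varies, and in particular share a trace.

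The second key step is to identify that common trace with the difference in the theorem. Feeding the two vectors $(m_{j-1,j},\,m_{j-1,j-1})^{T} = (1,0)^{T}$ and $(m_{j,j},\,m_{j,j-1})^{T} = (0,-1)^{T}$ through $N_j^{(k)}$ reads off its two columns, giving
\[
N_j^{(k)} \;=\; \begin{pmatrix} m_{j-1,\,j+kn} & -m_{j,\,j+kn} \\ m_{j-1,\,j+kn-1} & -m_{j,\,j+kn-1} \end{pmatrix},
\]
so $\mathrm{tr}\bigl(N_j^{(k)}\bigr) = m_{j-1,\,j+kn} - m_{j,\,j+kn-1}$, which, after relabeling $i = j-1$, is precisely $m_{i,\,i+kn+1} - m_{i+1,\,i+kn}$. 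Combined with the previous step, this proves the theorem.

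The main obstacle will be justifying the convention $m_{j,j-1} = -1$ used to read off the second column; this is forced by the dual recurrence in Lemma~\ref{lem:DivisionFreeFrieze} with $j$ replaced by $i+1$, namely $1 = m_{i,i+1} = m_{i-1,i+1}\,m_{i,i} - m_{i,i-1} = -m_{i,i-1}$, and may equivalently be viewed as the first step of the natural extension of \F\ to an $SL_2$-tiling below the row of $0$'s as in Example~\ref{ex:InfinitePolygon}. Once this small piece of bookkeeping is in place, the conjugation step carries essentially all the content, and the identification of the trace is pure linear algebra.
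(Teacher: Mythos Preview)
Your proof is correct. The transfer-matrix argument is clean and complete: the conjugation identity $N_{j+1}^{(k)} = \eta_{j+1} N_j^{(k)} \eta_{j+1}^{-1}$ follows directly from $n$-periodicity of the quiddity row, and your identification of the columns of $N_j^{(k)}$ via the two initial vectors $(1,0)^T$ and $(0,-1)^T$ is exactly right. The extension $m_{j,j-1}=-1$ is indeed forced by the dual recurrence of Lemma~\ref{lem:DivisionFreeFrieze}, and one can check by hand that the first two applications of $\eta_{j+1},\eta_{j+2}$ to each seed vector land on bona fide frieze entries before the recurrence for $|j-i|\geq 2$ takes over, so there is no circularity.

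As for comparison: the paper does not prove Theorem~\ref{thm:GrowthCoefficient} at all; it is quoted from \cite{2} and used as input for Theorem~\ref{thm:CombinatorialGrowthC} and Proposition~\ref{prop:PositivityCheck}. Your monodromy-trace argument is in fact the standard way this result (and the closely related recursion of Theorem~\ref{thm:GrowthCoefficientRecurrence}, via Cayley--Hamilton applied to $N_j^{(1)}$) is established in the literature, so you have supplied a self-contained proof where the paper only gives a citation.
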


Let $s_k = m_{0,kn-1} - m_{1,kn}$ be the \emph{$k$-th growth coefficient}. For example, the first growth coefficient of the frieze pattern in Figure \ref{fig:BPTBijection} is $8 -1 = 11 - 4 = 7$ and the first growth coefficient for the frieze pattern in Example \ref{ex:FPfromDissection} is $3 + 3\sqrt{2}$. By the following result, all $s_k$ are determined by  $s_1$; hence, we sometimes refer to $s_1$ as the \emph{principal growth coefficient}. 

\begin{theorem}[\cite{2}]\label{thm:GrowthCoefficientRecurrence}
Let $s_0 = 2$. The growth coefficients, $\{s_k\}$, of an infinite frieze pattern $\mathcal{F}$ satisfy \[
s_{k+1} = s_1s_{k} - s_{k-1}
\]
\end{theorem}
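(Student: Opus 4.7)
My plan is to encode the frieze recurrence as a product of $2\times 2$ matrices and then apply the Cayley--Hamilton theorem. Set $\eta(a)=\begin{pmatrix} a & -1 \\ 1 & 0\end{pmatrix}$. The dual form of Lemma \ref{lem:DivisionFreeFrieze} reads $m_{i',j+1}=a_j m_{i',j}-m_{i',j-1}$, where $a_j:=m_{j-1,j+1}$ is the $j$-th quiddity entry, so the column vector $v_j:=(m_{i',j},m_{i',j-1})^T$ (with any fixed row index $i'$) satisfies $v_{j+1}=\eta(a_j)v_j$. Iterating $n$ times produces the period transition matrix $M:=\eta(a_{j+n-1})\cdots\eta(a_j)$ with $Mv_j=v_{j+n}$; because the quiddity is $n$-periodic, different starting points $j$ give conjugates of $M$, so $\det M$ and $\mathrm{tr}(M^k)$ do not depend on the choice.

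Next I identify these invariants in terms of frieze entries by reading off the two columns of $M^k$. Starting from $v_i=(1,0)^T$, which corresponds to row index $i'=i-1$ via $m_{i-1,i}=1$ and $m_{i-1,i-1}=0$, iteration gives $M^k(1,0)^T=v_{i+kn}=(m_{i-1,i+kn},m_{i-1,i+kn-1})^T$. Starting from $v_i=(0,-1)^T$, which corresponds to $i'=i$ together with the extension convention $m_{i,i-1}=-1$ (the unique value keeping the recurrence $v_{i+1}=\eta(a_i)v_i$ self-consistent at the boundary row of zeroes), iteration gives $M^k(0,-1)^T=(m_{i,i+kn},m_{i,i+kn-1})^T$, so column two of $M^k$ is $(-m_{i,i+kn},-m_{i,i+kn-1})^T$. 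Hence
\[
M^k=\begin{pmatrix} m_{i-1,i+kn} & -m_{i,i+kn} \\ m_{i-1,i+kn-1} & -m_{i,i+kn-1}\end{pmatrix}.
\]
Therefore $\det M^k=1$ (consistent with $\det\eta(a)=1$) and $\mathrm{tr}(M^k)=m_{i-1,i+kn}-m_{i,i+kn-1}$, which by Theorem \ref{thm:GrowthCoefficient} equals $s_k$ (take $i''=i-1$ in the formula $s_k=m_{i'',i''+kn+1}-m_{i''+1,i''+kn}$).

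The rest is immediate. Since $M$ is $2\times 2$ with $\det M=1$ and $\mathrm{tr}(M)=s_1$, Cayley--Hamilton gives $M^2=s_1 M-I$; multiplying by $M^{k-1}$ yields $M^{k+1}=s_1 M^k-M^{k-1}$, and taking traces produces the desired recurrence $s_{k+1}=s_1 s_k-s_{k-1}$. The base case $s_0=\mathrm{tr}(I)=2$ matches the stated convention and, equivalently, agrees with the direct evaluation $s_0=m_{i-1,i}-m_{i,i-1}=1-(-1)=2$ in the same extension.

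The conceptual substance of the proof is the matrix representation of the period map; the main obstacle is the bookkeeping of signs in the second column of $M^k$ and the extension $m_{i,i-1}=-1$. Once this is pinned down, no further combinatorial content is needed, and the argument uses neither positivity nor realizability, so the recurrence holds for every $n$-periodic infinite frieze pattern.
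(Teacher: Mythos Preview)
Your proof is correct. The matrix encoding via $\eta(a)=\begin{pmatrix}a&-1\\1&0\end{pmatrix}$, the identification of the columns of $M^k$ using $i'=i-1$ and $i'=i$ (with the natural extension $m_{i,i-1}=-1$), and the conclusion via Cayley--Hamilton are all sound; the bookkeeping of signs and indices checks out, and the match $\mathrm{tr}(M^k)=m_{i-1,i+kn}-m_{i,i+kn-1}=s_k$ is exactly Theorem~\ref{thm:GrowthCoefficient} with the index shifted by one.

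There is nothing to compare against in this paper, however: Theorem~\ref{thm:GrowthCoefficientRecurrence} is quoted from \cite{2} without proof. Your argument is in fact essentially the one used in that reference (and standard in the frieze literature), so you have independently reproduced the known proof rather than found an alternative route.
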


We define a third weighting for matchings on $A_{n,m}$ or $S_n$, working specifically in the case of matchings of length $n$ ( in $\pathset{i}{i+n+2}$); such matchings choose one subgon incident to each vertex on the outer boundary of the surface. 

\begin{definition}[Annulus Weighting]\label{def:AnnulusWeight}
Let $S$ be an annulus or once-punctured disc with dissection $\D$. Let $I$ be the dissected infinite strip with dissection $\widetilde{\D}$ which is the universal cover of $S$. Let $w = \poly{i+1},\ldots,\poly{i+n} \in \pathset{i}{i+n+1}$. For each subgon $\mathcal{P}$ in $\D$, let $N_{\mathcal{P}}$ be the number of times $\poly{k}$, $i+1 \leq k \leq i+n$, is a lift of $\mathcal{P}$.
 If any $N_{\mathcal{P}} > \vert \mathcal{P} \vert -2$, we set $wt_A(w) = 0$. Otherwise, set  \[
wt_A(w) = \prod_{\mathcal{P} \in \D} U_{N_\mathcal{P}}(\lambda_{\vert \mathcal{P} \vert})
\]
\end{definition}

See Example \ref{ex:Matchings} for sample calculations of $wt_A$. We call this weighting the ``annulus'' weighting since it is equivalent to using the rules of the Traditional weighting but working with respect to the surface $S$ corresponding to the frieze pattern instead of the infinite strip. Even though we name this the annulus weighting, it is also valid in a once-punctured disc. The following result shows that the sum of matchings of length $n$ under $wt_A$ is invariant under cyclically shifting the starting point. For the following proof, recall our notation, 
\[
 i_n = \begin{cases} i \pmod{n} & n \nmid i \\
 n & n \mid i \end{cases}.
 \]

\begin{lemma}\label{lem:AnnulusWeightInvariant}
Let $\F = \{m_{i,j}\}$ be an $n$-periodic infinite frieze pattern of Type $\Lambda_{p_1,\ldots,p_s}$ from a dissection $\D$ of $A_{n,m}$ or $S_n$. Let $w = \poly{i+1},\ldots,\poly{i+n} \in \pathset{i}{i+n+1}$, and let $u = \poly{j+1},\ldots,\poly{j+n} \in \pathset{j}{j+n+1}$ be the result of cyclically shifting $w$. Then, $wt_A(w) = wt_A(u)$.

In particular, for any $i,j \in \mathbb{Z}$, \[
\sum_{w \in \pathset{i}{i+n+1}} wt_A(w) = \sum_{u \in \pathset{j}{j+n+1}} wt_A(u)
\]
\end{lemma}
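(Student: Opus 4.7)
The plan is to reduce everything to the observation that a cyclic shift preserves the multiset of subgons of $\D$ (not merely of the universal cover $I$) visited by a matching of length $n$. Let $\tau$ denote the deck transformation of $\rho : I \to S$ that shifts by one period, so $\tau(v_\ell^k) = v_\ell^{k+1}$ and $\rho \circ \tau = \rho$.

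First I would formalize the unit cyclic shift as a map
\[
\sigma : \pathset{i}{i+n+1} \longrightarrow \pathset{i+1}{i+n+2}, \qquad \sigma(\poly{i+1}, \ldots, \poly{i+n}) = (\poly{i+2}, \ldots, \poly{i+n}, \tau(\poly{i+1})).
\]
Using the construction of $I$ in Section \ref{subsec:InfStrip}, the subgon $\tau(\poly{i+1})$ is incident to the vertex one period to the right of the incidence point of $\poly{i+1}$, hence is incident to the appropriate lift of $v_{(i+n+1)_n}$, so $\sigma(w) \in \pathset{i+1}{i+n+2}$. The map $\sigma$ is a bijection, with inverse the reverse shift. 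Composing and inverting gives a bijection $\sigma^{j-i} : \pathset{i}{i+n+1} \to \pathset{j}{j+n+1}$ for any $j \in \Z$, and every cyclic shift of $w$ in the hypothesis arises this way.

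Next, since $\rho \circ \tau = \rho$, the multiset $\{\rho(\poly{k})\}_{k=i+1}^{i+n}$ of images in $\D$ is unchanged by $\sigma$, because only the first entry is altered and it is altered by a deck transformation. Hence the counts $N_\mathcal{P}$ appearing in Definition \ref{def:AnnulusWeight} are the same for $w$ and $\sigma(w)$ for every subgon $\mathcal{P} \in \D$. Since $wt_A$ is a function of these counts alone, $wt_A(w) = wt_A(\sigma(w))$. Iterating $\sigma$ proves the first assertion. The second then follows immediately by reindexing: $\sigma^{j-i}$ is a weight-preserving bijection between $\pathset{i}{i+n+1}$ and $\pathset{j}{j+n+1}$, so the two sums agree term-by-term.

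The main obstacle is the formal bookkeeping in the first step: one must verify, using the explicit covering-map description of Section \ref{subsec:InfStrip}, that $\tau$ restricts to a bijection on subgons of $I$ that is compatible with incidences, including the $S_n$-case where certain lifts of subgons carry a vertex at $+\infty$. Once this is settled, the remaining steps are a direct application of the definition of $wt_A$.
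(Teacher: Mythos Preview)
Your proposal is correct and is essentially the same argument as the paper's: both construct a bijection $\pathset{i}{i+n+1}\to\pathset{j}{j+n+1}$ that preserves the multiset $\{\rho(\poly{k})\}$ of subgons in $\D$, and then observe that $wt_A$ depends only on this multiset. The only cosmetic difference is that the paper builds the bijection in one step (project each $\poly{k}$ to $\D$ and re-lift at the shifted position) whereas you iterate a unit shift via the deck transformation $\tau$; these produce the same map.
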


\begin{proof}
 Recall that we define matchings for $A_{n,m}$ and $S_n$ on the infinite strip. Let $w = \poly{i+1},\ldots,\poly{i+n} \in \pathset{i}{i+n+1}$. For each $i+1 \leq k \leq i+n$, $\poly{k}$ is a lift of a polygon incident to $v_{k_n}$ in $\D$; call this subgon  $\overline{\poly{k_n}}$.  From $w$, we can construct a matching $u = Q_{j+1},\ldots,Q_{j+n} \in \pathset{j}{j+n+1}$, by, for all $j+1 \leq \ell \leq j+n$, setting $Q_\ell$ to be the lift of $\overline{\poly{\ell_n}}$ which is incident to $v_\ell$ in the infinite strip.  Then, by construction, for each $\mathcal{P}$ in the dissection $\D$, the number of times we use a lift of $\mathcal{P}$ is the same in $w$ and $u$, so that $wt_A(w) = wt_A(u)$.
 
 Moreover, this map between $w \in \pathset{i}{i+n+1}$ and $u \in \pathset{j}{j+n+1}$ is a bijection. It immediately follows that the sum over these two sets using $wt_A$ is equal.
\end{proof}

Next, we verify that $wt_A$ is unaffected by cutting ears. 

\begin{lemma}\label{lem:wtAEars}
Let $\F = \{m_{i,j}\}$ be an $n$-periodic infinite frieze pattern of Type $\Lambda_{p_1,\ldots,p_s}$ from a dissection $\D$ of $A_{n,m}$ or $S_n$ and suppose the dissection has an ear of size $p$. Let $\F' = \{m'_{i,j}\}$ be the $(n-(p-2))$-periodic frieze pattern from the surface after cutting this ear. Then, for all $i \in \mathbb{Z}$,\[
\sum_{w \in \pathset{i}{i+n+1}} wt_A(w) = \sum_{w \in \overline{\pathset{i}{i + n - (p-2)+1}}} wt_A(w)
\]
where $ \overline{\pathset{i}{i + n - (p-2)+1}}$ is the set of matchings between $v_i$ and $v_{i+n-(p-2)}$ in the surface after cutting this ear.
\end{lemma}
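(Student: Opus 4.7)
The plan is to build a weight-preserving bijection between the nonzero-weight matchings on the two sides. By Lemma \ref{lem:AnnulusWeightInvariant}, both sums are invariant under cyclic shifts of $i$, so it suffices to verify the identity for one conveniently chosen $i$. Let $Q$ be the size-$p$ ear being cut, and write its outer-boundary vertices as $v_{j-1}, v_j, \ldots, v_{j+p-3}, v_{j+p-2}$, where $v_j, \ldots, v_{j+p-3}$ are the $p-2$ ``interior'' vertices incident only to $Q$ and the unique non-boundary side of $Q$ joins $v_{j-1}$ to $v_{j+p-2}$. I would fix $i$ so that this interior block appears as $\poly{j}, \ldots, \poly{j+p-3}$ inside the matching window, e.g.\ by choosing $i \equiv j-1 \pmod n$.

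Next I would analyze which matchings in $\pathset{i}{i+n+1}$ can have nonzero annulus weight. For any $w = \poly{i+1}, \ldots, \poly{i+n}$, each of $\poly{j}, \ldots, \poly{j+p-3}$ must be a lift of $Q$, since those vertices admit no other incident subgon; because consecutive subgons in a matching share a vertex, these $p-2$ choices are forced to be the same lifted copy of $Q$ in the universal cover, so they contribute $N_Q \geq p-2$. If either $\poly{j-1}$ or $\poly{j+p-2}$ were also a lift of $Q$, then $N_Q \geq p-1$, and $wt_A(w) = 0$ since $U_{p-1}(\lambda_p) = 0$ by Lemma \ref{lem:ChebyPeriodic}. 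Hence every nonzero-weight matching has $N_Q = p-2$ exactly, and both $\poly{j-1}$ and $\poly{j+p-2}$ are lifts of subgons distinct from $Q$.

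The bijection then sends such a $w$ to the shortened sequence $w' = \poly{i+1}, \ldots, \poly{j-1}, \poly{j+p-2}, \ldots, \poly{i+n}$, obtained by deleting the $p-2$ forced copies of $Q$. Cutting the ear removes the interior vertices $v_j, \ldots, v_{j+p-3}$ and the subgon $Q$, leaving $v_{j-1}$ and $v_{j+p-2}$ as consecutive outer-boundary vertices and preserving all other subgons and incidences. Consequently $w'$ is a valid matching in $\overline{\pathset{i}{i+n-(p-2)+1}}$, and the correspondence is clearly a bijection onto the nonzero-weight matchings in that set. For weight preservation, the ear contributes $U_{p-2}(\lambda_p) = U_0(\lambda_p) = 1$ to $wt_A(w)$ by Lemma \ref{lem:ChebyshevGeometric}(1), while every remaining subgon appears in $w$ and $w'$ the same number of times with the same cap condition; hence $wt_A(w) = wt_A(w')$, and summing over the bijection yields the identity.

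The main obstacle is the bookkeeping in the universal cover: one must confirm that the $p-2$ forced interior choices really pick out a single lifted copy of the ear, and that after removing this lifted copy the trimmed sequence traces a legitimate matching between $v_i$ and $v_{i+n-(p-2)+1}$ in the universal cover of the cut surface. This is essentially automatic once one observes that the universal cover of the cut surface is obtained from that of the original by deleting all translates of the lifted ear, so the combinatorial correspondence on matchings is forced by the covering structure.
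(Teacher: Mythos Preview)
Your argument is essentially the paper's: build a weight-preserving bijection by deleting the $p-2$ forced ear entries, use $U_{p-2}(\lambda_p)=1$, and invoke Lemma~\ref{lem:AnnulusWeightInvariant} to reduce to one convenient $i$. Two minor slips to fix: the specific choice $i\equiv j-1$ places one ear-arc endpoint at the edge of the matching window, so your bijection formula $w' = \poly{i+1},\ldots,\poly{j-1},\poly{j+p-2},\ldots,\poly{i+n}$ becomes ill-posed---choose $i$ so that all of $v_{j-1},\ldots,v_{j+p-2}$ sit strictly inside the window (the paper relabels so the ear lies in $[\ell,\ell+p-1]\subset[1,n]$ and takes $i=0$); and the clause ``consecutive subgons in a matching share a vertex'' is not part of Definition~\ref{def:Matching}---the interior choices are all the same lift simply because each interior vertex $v_{j+t}^0$ is incident only to the single lift $Q^0$.
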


\begin{proof}
Let $S$ be the surface with dissection $\D$ which is the realization of $\F$. Without loss of generality, label the vertices on the outer boundary of the surface $v_1,\ldots,v_n$ such that the ear of size $p$ has vertices $v_\ell,\ldots,v_{\ell + p-1}$ with $1 \leq \ell < \ell + p \leq n$. Call this ear $\widehat{P}$. Then, every matching $w = \poly{1},\ldots,\poly{n} \in \pathset{0}{n+1}$  necessarily has $\poly{\ell+1} = \cdots = \poly{\ell+p-2} = \widehat{P}$. Moreover, in order for $w$ to have $wt_A(w) > 0$, it must be that $\poly{\ell} \neq \widehat{P}$ and $\poly{\ell + p -1} \neq \widehat{P}$. 

Let $\overline{\pathset{0}{n+(p-2)+1}}$ be the set of matchings between $v_1$ and $v_{n+(p-2)}$ in the surface $\overline{S}$ which is the result of cutting the ear $\widehat{P}$. Note that this surface now has $n - (p-2)$ vertices on the outer boundary. We have a bijection between $w = \poly{1},\ldots,\poly{n} \in \pathset{0}{n+1}$ with $\poly{\ell} \neq \widehat{P}$ and $\poly{\ell + p -1} \neq \widehat{P}$, and $u = Q_1,\ldots,Q_{n-(p-2)} \in \overline{\pathset{0}{n+(p-2)+1}}$. Given such a $w$, we set $u = Q_1,\ldots,Q_{n-(p-2)}$ as \[
Q_i = \begin{cases} \poly{i} & i \leq \ell \\ \poly{i+(p-2)} & i > \ell.
\end{cases}
\]

For any subgon $R \neq \widehat{P}$ in $\D$, the number of times $R$ appears in is the same as the number of times $R$ appears in $u$. In $w$, $N_{\widehat{P}} = p-2$ and $U_{p-2}(\lambda_p) = 1 = U_0(\lambda_p)$. Thus, $wt_A(w) = wt_A(u)$. Since we bijectively map every $w \in \pathset{0}{n+1}$ with $wt_A(w) > 0$ to $u \in \overline{\pathset{0}{n+(p-2)+1}}$, the claim follows for $i = 1$. By Lemma \ref{lem:AnnulusWeightInvariant}, the claim is true for arbitrary $i$. 
\end{proof}

With the preceding two results, we can conclude that the annulus weighting gives a combinatorial interpretation of the principal growth coefficient.

\begin{theorem}\label{thm:CombinatorialGrowthC}
Let $\F = \{m_{i,j}\}$ be an $n$-periodic infinite frieze pattern of Type $\Lambda_{p_1,\ldots,p_s}$ from a dissection $\D$ of $A_{n,m}$ or $S_n$. Let $s_1$ be the principal growth coefficient of $\F$. Then, for any $i \in  \mathbb{Z}$,\[
s_1 = \sum_{w \in \pathset{i}{i+n+1}} wt_A(w)
\]
\end{theorem}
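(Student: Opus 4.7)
By Lemma~\ref{lem:AnnulusWeightInvariant}, the sum $\sum_{w \in \pathset{i}{i+n+1}} wt_A(w)$ is invariant under cyclic shifts of $i$, so without loss of generality we may take $i = 0$. I would then apply Corollary~\ref{cor:TraditionalWeight} to rewrite the growth coefficient as
\[
s_1 \;=\; m_{0, n+1} - m_{1, n} \;=\; \sum_{w \in \pathset{0}{n+1}} wt_T(w) \;-\; \sum_{u \in \pathset{1}{n}} wt_T(u).
\]
The theorem therefore reduces to establishing the identity $\sum_{w\in\pathset{0}{n+1}} wt_T(w) - \sum_{u\in\pathset{1}{n}} wt_T(u) = \sum_{w\in\pathset{0}{n+1}} wt_A(w)$.

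My plan is to proceed by induction on the number of ears in a realizing dissection $\D$. If $\D$ has an ear of size $p$, let $\D'$ and $\F'$ be the dissection and frieze pattern obtained by cutting it, with new period $n' = n - (p-2)$. Lemma~\ref{lem:wtAEars} already says the right-hand side is unchanged: $\sum_{w\in\pathset{0}{n+1}} wt_A(w)$ for $\F$ equals $\sum_{w\in\pathset{0}{n'+1}} wt_A(w)$ for $\F'$. To push the induction through I must also show $s_1(\F) = s_1(\F')$. This I would verify algebraically: expanding $m_{0,n+1}^{\F}$ and $m_{1,n}^{\F}$ via the recurrences in Lemma~\ref{lem:DivisionFreeFrieze} along the cut interval of $p - 2$ consecutive $\lambda_p$ entries, and using the periodicity $U_{k+p}(\lambda_p) = -U_k(\lambda_p)$ from Lemma~\ref{lem:ChebyPeriodic}, the contributions from the cut collapse to yield $m_{0,n+1}^{\F} - m_{1,n}^{\F} = m_{0,n'+1}^{\F'} - m_{1,n'}^{\F'}$.

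With both invariances in hand, the problem reduces to the skeletal case. In a skeletal dissection, each subgon's outer vertices form a contiguous cyclic interval on the outer boundary, so any length-$n$ window on the universal cover contains either a single complete lift of each subgon or, for a few ``boundary-crossing'' subgons, two partial lifts split by the window boundary. The weights $wt_T(w)$ and $wt_A(w)$ agree on every subgon appearing in a single lift and differ only on the split ones. Using the Chebyshev identity $U_a(x) U_b(x) - U_{a+b}(x) = U_{a-1}(x) U_{b-1}(x)$, a rearrangement of Lemma~\ref{lem:ChebyshevProduct}, the per-subgon discrepancy factors as the weight of a pair of ``shorter'' choices at the two partial lifts. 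Summing over all length-$n$ matchings and grouping by which subgons are split, I would exhibit a weight-preserving bijection between the set of matchings in $\pathset{0}{n+1}$ contributing to $wt_T - wt_A$ and the matchings in $\pathset{1}{n}$ under $wt_T$, which is the identity we need.

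The main obstacle I anticipate is the careful bookkeeping of the cut-invariance of $s_1$, which is not immediate from the matching side alone and requires tracking how Chebyshev periodicity collapses the ear contributions to $m_{0,n+1}$ and $m_{1,n}$ in a parallel way. If this direct algebraic verification proves cumbersome, an alternative route is to prove the skeletal case first and use it, together with Lemma~\ref{lem:wtAEars} and the geometric realization of ear-cutting from Lemma~\ref{lem:CutRealizable}, to deduce $s_1(\F) = s_1(\F')$ a posteriori via the matching interpretation itself.
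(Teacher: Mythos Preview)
Your overall architecture matches the paper's proof: rewrite $s_1$ as $\sum_{\pathset{0}{n+1}} wt_T - \sum_{\pathset{1}{n}} wt_T$ via Corollary~\ref{cor:TraditionalWeight}, show both this difference and $\sum wt_A$ are invariant under removing ears, and then settle the skeletal case with the Chebyshev identity $U_{a+1}U_{b+1} - U_aU_b = U_{a+b+2}$.

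Two places where the paper is sharper. First, for cut-invariance of $s_1$, the paper does \emph{not} go through the frieze recurrence algebraically; instead it proves directly that $\sum_{\pathset{0}{n+1}} wt_T - \sum_{\pathset{1}{n}} wt_T$ is unchanged by removing an ear, by partitioning matchings according to whether they use the ear $Q_0$ or its neighbour at the two ear-boundary vertices and applying Lemma~\ref{lem:ChebyshevProduct} termwise (with a separate case analysis when the ear straddles the edge $(v_n,v_1)$). Your algebraic route via continuants and $U_{k+p}(\lambda_p)=-U_k(\lambda_p)$ is a legitimate alternative and, if done cleanly via the $2\times2$ matrix product $\prod\bigl(\begin{smallmatrix}a_i & -1\\ 1 & 0\end{smallmatrix}\bigr)$, could actually be shorter than the paper's case analysis; but you should commit to it and carry it out rather than leave it as a plan. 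Second, in the skeletal case there is exactly \emph{one} boundary-crossing subgon, namely the subgon $\hat{\mathcal P}$ containing the outer edge $(v_n,v_1)$, not ``a few''. The paper's bijection is then simply truncation $w\mapsto w[2{:}n{-}1]$ from $\{w\in\pathset{0}{n+1}:\poly{1}=\poly{n}=\hat{\mathcal P}\}$ onto $\pathset{1}{n}$, and the Chebyshev identity handles the weight discrepancy in one step.

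One genuine problem: your proposed ``alternative route'' is circular. You suggest proving the skeletal case first and then deducing $s_1(\F)=s_1(\F')$ ``a posteriori via the matching interpretation.'' But the matching interpretation of $s_1$ \emph{is} the theorem you are proving; knowing it only for skeletal $\F$ together with $\sum wt_A(\F)=\sum wt_A(\F_{\mathrm{skel}})$ gives you $\sum wt_A(\F)=s_1(\F_{\mathrm{skel}})$, not $s_1(\F)$. You still need an independent proof that $s_1$ is cut-invariant, so drop this fallback and finish the algebraic (or the paper's combinatorial) argument for that step.
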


\begin{proof}
For shorthand, let $S$ be the surface corresponding to $\F$.  Since $S$ has $n$ vertices on the outer boundary, the rows of $\F$ are necessarily $n$-periodic. Theorem \ref{thm:GrowthCoefficient} guarantees that $m_{i,i+n+1} - m_{i+1,i+n}$ is constant for all $i \in \mathbb{Z}$; by definition, $s_1$ is this constant difference. From Corollary \ref{cor:TraditionalWeight}, we know that $m_{i,i+n+1} = \sum_{w \in \pathset{i}{i+n+1}} wt_T(w)$ and similarly $m_{i+1,i+n} = \sum_{w \in \pathset{i+1}{i+n}} wt_T(w)$. Thus, it is sufficient to prove the following relation amongst the weights of matchings,\begin{equation}\label{eq:GrCo}
\sum_{w \in \pathset{i}{i+n+1}} wt_A(w) = \sum_{w \in \pathset{i}{i+n+1}} wt_T(w) - \sum_{w \in \pathset{i+1}{i+n}} wt_T(w).
\end{equation}

First, we claim that it is sufficient to consider the case of skeletal dissections. From Lemma \ref{lem:wtAEars}, we know that if $\D$ has an ear of size $p$, then $\sum_{w \in \pathset{i}{i+n+1}} wt_A(w) = \sum_{w \in \overline{\pathset{i}{i+n - (p-2) + 1}}} wt_A(w)$ where $\overline{\pathset{i}{i+n - (p-2) + 1}}$ is the set of matchings between $v_i$ and $v_{i+n-(p-2)+1}$ in $\overline{S}$, the resulting surface with dissection after cutting this ear.

Now, we claim that the right-hand side of Equation \ref{eq:GrCo} is also invariant under cutting ears, \begin{equation}\label{eq:TradWeightDiffEar}
\sum_{w \in \pathset{i}{i+n+1}} wt_T(w) - \sum_{w \in \pathset{i+1}{i+n}} wt_T(w) = \sum_{w \in \overline{\pathset{i}{i+n-(p-2)+1}}} wt_T(w) - \sum_{w \in \overline{\pathset{i+1}{i+n-(p-2)}}} wt_T(w).
\end{equation}

For convenience, we relabel the vertices of $S$ so that $i = 0$. Let $Q_0$ be an ear of size $p$ in $\D$. If the boundary edge $(v_n,v_1)$ (the edge starting at $v_n$ and traveling counterclockwise to $v_1$) is not an edge of  $Q_0$, then from Equation \ref{eq:CutTradWeight}, $\sum_{w \in \pathset{0}{n+1}} wt_T(w) =\sum_{w \in \overline{\pathset{0}{n-(p-2)+1}}} wt_T(w) $. This same equation shows $\sum_{w \in \pathset{1}{n}} wt_T(w) =\sum_{w \in \overline{\pathset{1}{n-(p-2)}}} wt_T(w) $ if neither $v_1$ nor $v_n$ are vertices of $Q_0$. 

Next, consider $\sum_{w \in \pathset{1}{n}} wt_T(w)$ where $(v_n,v_1)$ is not an edge of $Q_0$, but $Q_0$ has $v_1$ or $v_n$ as a vertex, or possibly both. Then, the vertices of $Q_0$ are $a,\ldots,a+p$ with $1 \leq a \leq n-p$. Every $w = \poly{2},\ldots,\poly{n-1} \in \pathset{1}{n}$ with $wt_T(w)$ necessarily has $\poly{a+1} = \cdots = \poly{a+p-1} = Q_0$. In order for $wt_T(w) \neq 0$, if $a > 1$, $\poly{a} \neq Q_0$ and if $a+p < n$, $\poly{a+p} \neq Q_0$. Each such matching can be mapped with a weight-preserving bijection to a matching in $\overline{\pathset{1}{n - (p-2)}}$ in $S$ after removing the ear $Q_0$. Note that if $v_1$ and $v_n$ are both vertices of $Q_0$, there is exactly one matching in $\pathset{1}{n}$, $w = Q_0,\ldots,Q_0$ which is length  $n-2 = \vert Q_0  \vert - 2$ times. It follows that $wt_L(w) = 1$. Moreover, since in this case $n = p$, we have that $\overline{\pathset{1}{n-(p-2)}} = \overline{\pathset{1}{n}} = \{\emptyset\} $. Since $wt_L(\emptyset) = 1$, the claim still holds. 

Now, assume that one of the edges of $Q_0$ is the boundary edge $(v_n,v_1)$. Let $(v_b,v_a)$ be the unique non-boundary edge from $\D$ which is an edge of $Q_0$.  Note that the vertices of $Q_0$ are, in counterclockwise order, $v_b,v_{b+1},\ldots,v_n,v_1,\ldots,v_a$ for $b \leq n$ and $a \geq 1$; in particular, $Q_0$ is a $(a + (n-b+1))$-gon. The vertices of $S$ after cutting $Q_0$ are $v_a,v_{a+1},\ldots,v_b$. 

First, let $a > 1$ and $b < n$. Necessarily, all $w = \poly{1},\ldots,\poly{n} \in \pathset{0}{n+1}$ have $\poly{1} = \cdots \poly{a-1} = Q_0$ and $\poly{b+1} = \cdots = \poly{n} = Q_0$. The subgons in these two sequences are distinct in the infinite strip $I$ but not distinct in the surface $S$.

\begin{center}
\begin{tikzpicture}[scale = 2]
\draw (180:1.5\R) -- (200:1.5\R) -- (220:1.5\R) -- (240:1.5\R) -- (260:1.5\R) -- (280:1.5\R) -- (300:1.5\R) -- (320:1.5\R) -- (340:1.5\R) -- (0:1.5\R);
\draw (200:1.5\R) -- (320:1.5\R);
\draw (180:1.5\R) -- (340:1.5\R);
\node[] at (270:1.1\R){$Q_0$};
\node[below] at (260:1.5\R){$v_n$};
\node[below] at (280:1.5\R) {$v_1$};
\node[left] at (200:1.5\R) {$v_b$};
\node[right] at (320:1.5\R){$v_a$};
\draw[xshift = 2\R] (0,0) -- (5,0);
\draw[xshift = 2\R] (0,-1) -- (5,-1);
\node[below, xshift = 2\R] at (1.5,-1) {$v_1$};
\node[below,xshift = 2\R] at (2.5,-1){$v_a$};
\node[below,xshift = 2\R] at (4,-1){$v_b$};
\node[below,xshift = 2\R] at (5,-1){$v_n$};
\draw[xshift = 2\R] (1.75,-1) to [ out = 120, in = 60] (0,-1);
\draw[xshift = 2\R] (3.25,-1) to [ out = 60, in = 120] (5,-1);
\node[xshift = 2\R] at (1.5,-0.8){$Q_0$};
\node[xshift = 2\R] at (5,-0.8) {$Q_0$};
\end{tikzpicture}
\end{center}

 Let $\pathset{0}{n+1}^{0,0}$ be the set of matchings with $\poly{a} = Q_0$ and $\poly{b} = Q_0$. Let $\pathset{0}{n+1}^{0,1}$ be the set of matchings with $\poly{a} = Q_0$ and $\poly{b} \neq Q_0$. Define $\pathset{0}{n+1}^{1,0}$ and $\pathset{0}{n+1}^{1,1}$ similarly.   We define $\pathset{1}{n}^{x,y}$ similarly. (Note this is slightly different than how these indices were used in the proof of Theorem \ref{thm:EqualWeights}.)

Now we can expand the lefthand side of Equation \ref{eq:TradWeightDiffEar} using these partitions of each set of matchings and use Lemma \ref{lem:ChebyshevProduct} on like terms. First, we look at the terms which have $\poly{a} = \poly{b} = Q_0$. 

\begin{align*}
\sum_{w \in \pathset{0}{n+1}^{0,0}} wt_T(w) - \sum_{w \in \pathset{1}{n}^{0,0}} wt_T(w) \\= U_a(\lambda_{\vert Q_0\vert})U_{n-b+1}(\lambda_{\vert Q_0\vert}) \sum_{w \in \pathset{a}{b}} wt_T(w) - U_{a-1}(\lambda_{\vert Q_0\vert})U_{n-b}(\lambda_{\vert Q_0\vert}) \sum_{w \in \pathset{a}{b}} wt_T(w) \\
= U_{a+(n-b+1)}(\lambda_{\vert Q_0\vert})\sum_{w \in \pathset{a}{b}}wt_T(w) = - \sum_{w \in \pathset{a}{b}}wt_T(w)
\end{align*}

We do similar computations for the other sets $\pathset{0}{n+1}^{x,y}$ and $\pathset{1}{n}^{x,y}$. Let $\pathset{a}{b+1}^-$ be the set of matchings which do not use $Q_0$ at $v_b$. 
\begin{align*}
 \sum_{w \in \pathset{0}{n+1}^{0,1}} wt_T(w) - \sum_{w \in \pathset{1}{n}^{0,1}} wt_T(w) \\= U_a(\lambda_{\vert Q_0\vert})U_{n-b}(\lambda_{\vert Q_0\vert}) \sum_{w \in \pathset{a}{b+1}^-} wt_T(w) - U_{a-1}(\lambda_{\vert Q_0\vert})U_{n-b-1}(\lambda_{\vert Q_0\vert}) \sum_{w \in \pathset{a}{b+1}^-} wt_T(w) \\
= U_{a+(n-b)}(\lambda_{\vert Q_0\vert})\sum_{w \in w \in \pathset{a}{b+1}^-}wt_T(w) = 0
\end{align*}

We have $U_{a+(n-b)}(\lambda_{\vert Q_0 \vert}) = 0$ since $\vert Q_0 \vert = a + (n-b + 1)$. We can analogously conclude \begin{align*}
  \sum_{w \in \pathset{0}{n+1}^{1,0}} wt_T(w) - \sum_{w \in \pathset{1}{n}^{1,0}} wt_T(w) = 0   
\end{align*}

Finally, we consider $\pathset{0}{n+1}^{1,1}$ and $\pathset{1}{n}^{1,1}$. Note that the set of matchings in $\pathset{a-1}{b+1}$ which do not use $Q_0$ at $v_a$ or $v_b$ is equivalent to $\overline{\pathset{b}{a+n-(p-2)}}$, the set of matchings between $v_b$ and $v_a$ in $\overline{S}$ after cutting $Q_0$. This is true since in $\overline{S}$, $v_a$ and $v_b$ are neighbors. Then, \begin{align*}
  \sum_{w \in \pathset{0}{n+1}^{1,1}} wt_T(w) - \sum_{w \in \pathset{1}{n}^{1,1}} wt_T(w) \\= U_{a-1}(\lambda_{\vert Q_0\vert})U_{n-b}(\lambda_{\vert Q_0\vert}) \sum_{w \in \overline{\pathset{b}{a}}} wt_T(w) - U_{a-1}(\lambda_{\vert Q_0\vert})U_{n-b-1}(\lambda_{\vert Q_0\vert}) \sum_{w \in \overline{\pathset{b}{a}}} wt_T(w) \\
= U_{a+(n-b)-1}(\lambda_{\vert Q_0\vert})\sum_{w \in  \overline{\pathset{b}{a+n-(p-2)}}}wt_T(w) = \sum_{w \in  \overline{\pathset{b}{a+n-(p-2)}}}wt_T(w)  .
\end{align*}

Since $\pathset{0}{n+1}$ is partitioned by the sets $ \pathset{0}{n+1}^{x,y}$ for $x,y \in \{0,1\}$ and similarly for $\pathset{1}{n}$, we conclude that \begin{align*}
   \sum_{w \in \pathset{0}{n+1}} wt_T(w) - \sum_{w \in \pathset{1}{n}} wt_T(w) = \sum_{w \in  \overline{\pathset{b}{a+n-(p-2)}}}wt_T(w)  - \sum_{w \in \pathset{a}{b}}wt_T(w)\\
   = \sum_{w \in  \overline{\pathset{b}{a+n-(p-2)}}}wt_T(w)  - \sum_{w \in \overline{\pathset{a}{b}}}wt_T(w).
\end{align*}

where $\pathset{a}{b} = \overline{\pathset{a}{b}}$  since $Q_0$ is not incident to any vertices strictly between $v_a$ and $v_b$ when traveling counterclockwise. This is the desired result since the vertices remaining after cutting $Q_0$ are  $v_a,v_{a+1},\ldots,v_{b-1},v_b$.

Now we turn to the $a = 1$ or $b = n$ case. Suppose that $a =1$. We cannot also have $b = n$ simultaneously since $(v_b,v_a)$ is a non-boundary arc incident to $Q_0$. Note that in this case, $\vert Q_0 \vert = n-b+2$. 

\begin{center}
\begin{tikzpicture}[scale = 2]
\draw (180:1.5\R) -- (200:1.5\R) -- (220:1.5\R) -- (240:1.5\R) -- (260:1.5\R) -- (280:1.5\R) -- (300:1.5\R) -- (320:1.5\R) -- (340:1.5\R) -- (0:1.5\R);
\draw (200:1.5\R) -- (280:1.5\R);
\draw (180:1.5\R) -- (340:1.5\R);
\node[] at (240:1.3\R){$Q_0$};
\node[below] at (257:1.5\R){$v_n$};
\node[below] at (282:1.5\R) {$v_1 = v_a$};
\node[left] at (200:1.5\R) {$v_b$};
\draw[xshift = 2\R] (0,0) -- (5,0);
\draw[xshift = 2\R] (0,-1) -- (5,-1);
\node[below,xshift = 2\R] at (2.5,-1){$v_1 = v_a$};
\node[below,xshift = 2\R] at (4,-1){$v_b$};
\node[below,xshift = 2\R] at (5.5,-1){$v_n$};
\node[below,xshift = 2\R] at (5.9,-1){$v_{n+1}$};
\node[below,xshift = 2\R] at (0.8,-1) {$v_{b - n}$};
\draw[xshift = 2\R] (1.75,-1) to [ out = 120, in = 60] (0,-1);
\draw[xshift = 2\R] (3.25,-1) to [ out = 60, in = 120] (5,-1);
\node[xshift = 2\R] at (1.5,-0.8){$Q_0$};
\node[xshift = 2\R] at (5,-0.8) {$Q_0$};
\end{tikzpicture}
\end{center}

In this case, since $v_{b+1},\ldots,v_n$ are only incident to $Q_0$, any $w = \poly{1},\ldots,\poly{n} \in \pathset{0}{n+1}$  has $\poly{b+1} = \cdots = \poly{n} = Q_0$. In order for $wt_T(w) > 0$, it must be that $\poly{b} \neq Q_0$. Thus, using the same notation as above, the only subsets of $\pathset{0}{n+1}$ which contain matchings with nonzero weight are $\pathset{0}{n+1}^{0,1}$ and $\pathset{0}{n+1}^{1,1}$. We sum over all matchings with nonzero traditional weight in $\pathset{0}{n+1}$, again letting $\pathset{1}{b+1}^-$ be the subset of $\pathset{1}{b+1}$ which does not choose $Q_0$ at $b$, and similarly for $\pathset{0}{b+1}^-$.

\begin{align*}
  \sum_{w \in \pathset{0}{n+1}} wt_T(w)  =    \sum_{w \in \pathset{0}{n+1}^{0,1}} wt_T(w) + \sum_{w \in \pathset{0}{n+1}^{1,1}} wt_T(w)\\
  = U_1(\lambda_{\vert Q_0 \vert})U_{ (n-b)}(\lambda_{\vert Q_0 \vert}) \sum_{v \in \pathset{1}{b+1}^-} wt_T(v) + U_{(n-b) }(\lambda_{\vert Q_0 \vert}) \sum_{v \in \pathset{0}{b+1}^-} wt_T(v)\\
  = \lambda_{\vert Q_0 \vert} \sum_{v \in \pathset{1}{b+1}^-} wt_T(v) + \sum_{v \in \pathset{0}{b+1}^-} wt_T(v)\\
\end{align*}

Since $a = 1$, matchings in $\pathset{1}{n}$ no longer choose a subgon at $v_a$. Thus, we will alter our notation. Let $\pathset{1}{n}^0$ be the subset of matchings $w = \poly{2},\ldots,\poly{n-1}$ with $\poly{b} = Q_0$ and we define $\pathset{1}{n}^1$ to be the subset of matchings with $\poly{b} \neq Q_0$. 

\begin{align*}
  \sum_{w \in \pathset{1}{n}} wt_T(w)  =    \sum_{w \in \pathset{1}{n}^0} wt_T(w) + \sum_{w \in \pathset{1}{n}^1} wt_T(w)\\
  = U_{(n-b)}(\lambda_{\vert Q_0 \vert}) \sum_{v \in \pathset{1}{b}} wt_T(v) + U_{(n-b) - 1}(\lambda_{\vert Q_0 \vert}) \sum_{v \in \pathset{1}{b+1}^-} wt_T(v)\\
  =  \sum_{v \in \pathset{1}{b}} wt_T(v) + \lambda_{\vert Q_0 \vert} \sum_{v \in \pathset{1}{b+1}^-} wt_T(v)\\
\end{align*}

where by Lemma \ref{lem:ChebyshevGeometric}, $ U_{ (n-b) - 1}(\lambda_{\vert Q_0 \vert}) = U_1(\lambda_{\vert Q_0\vert}) = \lambda_{\vert Q_0 \vert}$. 

Thus, in the $a = 1$ case, we conclude \begin{align*}
 \sum_{w \in \pathset{0}{n+1}} wt_T(w) - \sum_{w \in \pathset{1}{n}} wt_T(w)  =  \sum_{v \in \pathset{0}{b+1}^-} wt_T(v) - \sum_{v \in \pathset{1}{b}} wt_T(v)\\
 = \sum_{w \in  \overline{\pathset{b}{1}}}wt_T(w)  - \sum_{w \in \overline{\pathset{1}{b}}}wt_T(w).
\end{align*}

The $b = n$ case is identical. 

Now we know both sides of the desired equality in Equation \ref{eq:GrCo} are invariant under removing ears from the dissection. Hence, it suffices to consider how $wt_T$ and $wt_A$ compare in a skeletal dissection.

We will keep our vertex labels so that we consider $\sum_{w \in \pathset{0}{n+1}} wt_A(w)$. Let $\hat{P}$ be the subgon which contains the boundary edge $(v_n,v_1)$. Given a matching $w = \poly{1},\ldots,\poly{n} \in \pathset{0}{n+1}$, we only have $wt_T(w) \neq wt_A(w)$ if $\poly{1} = \poly{n} = \hat{P} $. This is the case since, for any other subgon $Q$ in $\D$, there is only one lift of $Q$ in the infinite strip between $v_1$ and $v_n$; in the notation of Definitions \ref{def:TradWeight} and \ref{def:AnnulusWeight}, $k_Q = N_Q$. However, in the infinite strip, $v_1$ and $v_n$ are adjacent to separate lifts of $\hat{P}$. 

Let $\widehat{\pathset{0}{n+1}} = \{w = \poly{1},\ldots,\poly{n} \in \pathset{0}{n+1} : \poly{1} = \poly{n} = \hat{P}\}$. Note that in some dissections $\widehat{\pathset{0}{n+1}} = \pathset{0}{n+1}$. We have a bijection, $\varphi$, between $\widehat{\pathset{0}{n+1}}$ and $\pathset{1}{n}$, given by sending $w \in\widehat{\pathset{0}{n+1}}$ to $w[2:n-1]$. This bijection is not necessarily weight-preserving under any weighting.

Recall that, given $w = \poly{i+1},\ldots,\poly{j-1}$, for $i+1 \leq a \leq b \leq j-1$, $w[a:b] = \poly{a},\ldots,\poly{b}$. For each $w \in \widehat{\pathset{0}{n+1}}$, let $a_w \geq 1$ be the largest possible integer such that all subgons in $w[1:a_w]$ are $\hat{P}$. Similarly, define $b_w \leq n$ to be the smallest possible integer such that all subgons in $w[b_w:n]$  are $\hat{P}$. Then, for each $w \in \widehat{\pathset{0}{n+1}}$, $wt_T(w) = U_{a_w}(\lambda_{\vert \hat{P}\vert}) U_{n-b_w + 1}(\lambda_{\vert \hat{P}\vert})wt_T(w[a_w+1,b_w-1])$ and $wt_T(\varphi(w)) = U_{a_w-1}(\lambda_{\vert \hat{P}\vert}) U_{n-b_w}(\lambda_{\vert \hat{P}\vert})wt_T(w[a_w+1,b_w-1])$. This allows us to reduce the following, using Lemma \ref{lem:ChebyshevProduct}, \begin{align*}
\sum_{w \in \widehat{\pathset{0}{n+1}}} wt_T(w) - \sum_{w \in \pathset{1}{n}} wt_T(w)\\
=  \sum_{w \in \widehat{\pathset{0}{n+1}}} \bigg(U_{a_w}(\lambda_{\vert \hat{P}\vert}) U_{n-b_w + 1}(\lambda_{\vert \hat{P}\vert})  - U_{a_w-1}(\lambda_{\vert \hat{P}\vert}) U_{n-b_w}(\lambda_{\vert \hat{P}\vert})\bigg) wt_T(w[a_w+1,b_w-1]) \\
=  \sum_{w \in \widehat{\pathset{0}{n+1}}} U_{a_w + (n-b_w + 1)}(\lambda_{\vert \hat{P}\vert})wt_T(w[a_w+1,b_w-1])
\end{align*}

Since $\hat{P}$ is not an ear, $U_{a_w + (n-b_w+1)}(\lambda_{\vert \hat{P}\vert}) \geq 0$.

Let $\pathset{0}{n+1}'$ be the complement of $\widehat{\pathset{0}{n+1}}$ in $\pathset{0}{n+1}$. Then, we can conclude that \begin{align*}
\sum_{w \in\pathset{0}{n+1}} wt_T(w) - \sum_{w \in \pathset{1}{n}} wt_T(w)  \\ =  \sum_{w \in \pathset{0}{n+1}'} wt_T(w) + \sum_{w \in \widehat{\pathset{0}{n+1}}} U_{a_w + (n-b_w + 1)}(\lambda_{\vert \hat{P}\vert})wt_T(w[a_w+1,b_w-1])
\end{align*}

As discussed, if $w \in \pathset{0}{n+1}'$, $wt_T(w) = wt_A(w)$. If $w \in \widehat{\pathset{0}{n+1}}$, so that for some integers $a_w\geq 1, b_w \leq n$, the submatchings $w[1,a_w]$ and $w[b_w,n]$ both only use the subgon $\hat{P}$, then $wt_A(w) = U_{a_w + (n-b_w + 1)}(\lambda_{\vert \hat{P}\vert})wt_T(w[a_w+1,b_w-1])$. Thus, we have shown Equation \ref{eq:GrCo}.
\end{proof}

Knowledge of the growth coefficient allows us to refine Step 5 of our algorithm in Section \ref{subsec:RealizabilityAlgorithm}. If a skeletal frieze pattern satisfies Proposition \ref{prop:realizability}, then it is either realizable by a dissection of $A_{n,m}$ or $S_n$. In order to determine which is the correct surface, one needed to check the conditions listed in Lemma \ref{lem:SkeletalSn}. Checking the first growth coefficient is an alternative way to differentiate between frieze patterns of these two surfaces.

\begin{proposition}\label{prop:GrCo2}
An infinite frieze pattern $\mathcal{F}$ of Type $\Lambda_{p_1,\ldots,p_s}$ which satisfies Proposition \ref{prop:realizability} is realizable by a dissection of $S_n$ if and only if its first growth coefficient is 2.
\end{proposition}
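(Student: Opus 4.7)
The plan is to apply Theorem \ref{thm:CombinatorialGrowthC} and write $s_1=\sum_{w\in\pathset{0}{n+1}} wt_A(w)$, then analyze this sum in a skeletal realization of $\F$. Since $\F$ is skeletal and satisfies Proposition \ref{prop:realizability}, it admits a skeletal realization by a dissection of either $S_n$ or $A_{n,m}$, and by Lemma \ref{lem:wtAEars} assuming skeletality costs no generality. I would label the subgons $T_1,\dots,T_\ell$ cyclically, with $T_j$ bounded on the outer side by bridging arcs at $v_{a_j}$ and $v_{a_{j+1}}$, and let $c_j$ denote its inner vertex count (with the puncture counted once in the $S_n$ case). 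A length-$n$ matching in the universal cover makes a forced choice at each non-bridging outer vertex and a binary choice between two lifted sectors at each bridging vertex; interior forcing contributes $|T_j|-3$ to $N_{T_j}$ while the bridging choices add $0$, $1$, or $2$ more, and $wt_A$ is nonzero exactly when each such extra is at most $c_j$.

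For an $S_n$ realization, $c_j=1$ for every $j$, and the identities $\sum_j(|T_j|-2)=n=\sum_j N_{T_j}$ force $N_{T_j}=|T_j|-2$ for each $j$ in every nonzero matching. By the Chebyshev symmetry in Lemma \ref{lem:ChebyshevGeometric}, each such matching has weight $\prod_j U_{|T_j|-2}(\lambda_{|T_j|})=\prod_j U_0(\lambda_{|T_j|})=1$. Tracing the cyclic consistency of the resulting $L/R$ assignments, consecutive subgons must agree on their shared bridging vertex, leaving only the two cyclic configurations all-$L$ and all-$R$. Hence $s_1=2$.

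For an $A_{n,m}$ realization, counting inner boundary edges gives $\sum_j c_j=\ell+m\ge\ell+1$, so some $c_{j_0}\ge 2$. When $\ell\ge 2$, the all-$L$ and all-$R$ matchings each contribute weight $\prod_j U_{c_j-1}(\lambda_{|T_j|})\ge 1$ (using the symmetry $U_k(\lambda_p)=U_{p-2-k}(\lambda_p)$ to rewrite each factor). Modifying the all-$L$ matching by switching the choice at $v_{a_{j_0}}$ to $R$ produces a third matching in which the extra at $T_{j_0}$ becomes $2$, permitted precisely because $c_{j_0}\ge 2$; the Chebyshev symmetry converts its weight into $U_{c_{j_0}-2}(\lambda_{|T_{j_0}|})\cdot U_{c_{j_0-1}}(\lambda_{|T_{j_0-1}|})\cdot \prod_{j\notin\{j_0-1,j_0\}} U_{c_j-1}(\lambda_{|T_j|})$, every factor being $\ge 1$. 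This third matching is distinct from both cyclic configurations, so $s_1\ge 3$. In the remaining case $\ell=1$, the two available matchings each have weight $U_n(\lambda_{n+m+2})=U_m(\lambda_{n+m+2})$, which strictly exceeds $1$ by Lemma \ref{lem:ChebyshevGeometric} since the equality cases $k\in\{0,p-2\}$ and $(p,k)=(3,1)$ are all excluded by $n,m\ge 1$, giving $s_1>2$.

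The main obstacle will be the bookkeeping in the $A_{n,m}$ case: repeatedly invoking the Chebyshev symmetry to convert raw $N_{T_j}$ exponents into indices of the form $c_j-\epsilon$ that Lemma \ref{lem:ChebyshevGeometric} bounds below by $1$, and adapting the binary $L/R$ parameterization to the degenerate configurations (forced when $c_j\ge 2$) in which two consecutive bridging arcs share an outer vertex and the number of sectors at that vertex exceeds two, so that the ``third matching'' construction must be replaced by an analogous sector flip within the enlarged sector set.
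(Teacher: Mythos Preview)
Your approach is correct and shares the paper's overarching strategy: both invoke Theorem~\ref{thm:CombinatorialGrowthC} to write $s_1$ as the annulus-weighted sum over length-$n$ matchings in a skeletal realization, then analyze that sum. The execution differs. The paper argues structurally: it defines the two extremal matchings $w_{CW}$ and $w_{CCW}$ (each vertex selects its clockwise-most, resp.\ counterclockwise-most, incident subgon), notes both have $wt_A\ge 1$ so that $s_1\ge 2$ always, and then observes that $s_1=2$ forces every subgon $P$ to have exactly $|P|-1$ vertices on the outer boundary---a configuration possible only on $S_n$. Your route is more explicitly constructive: you parameterize matchings by $L/R$ choices at bridging vertices, prove $s_1=2$ for $S_n$ by a counting argument ($\sum N_{T_j}=n=\sum(|T_j|-2)$ forces equality termwise), and for $A_{n,m}$ exhibit a third nonzero matching (or, when $\ell=1$, show the two matchings each have weight strictly exceeding $1$). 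The paper's framing has the advantage that $w_{CW}$ and $w_{CCW}$ are well-defined regardless of how many sectors meet at a vertex, so the degeneracy you flag at the end (multiple bridging arcs sharing an outer vertex) simply does not arise.

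One slip to fix: your claim that ``interior forcing contributes $|T_j|-3$ to $N_{T_j}$'' is only correct when $c_j=1$. In general the forced contribution is $k_j-1$ where $k_j=a_{j+1}-a_j$ is the number of outer boundary edges of $T_j$; since $|T_j|=k_j+c_j+1$ on $A_{n,m}$, this equals $|T_j|-c_j-2$, not $|T_j|-3$. Your subsequent criterion ``nonzero iff each extra is at most $c_j$'' is nonetheless correct, because $N_{T_j}=(k_j-1)+\epsilon_j\le|T_j|-2=k_j+c_j-1$ rearranges to exactly $\epsilon_j\le c_j$. (Also note that skeletality is already part of the hypothesis---Proposition~\ref{prop:realizability} is stated only for skeletal $\mathcal F$---so the appeal to Lemma~\ref{lem:wtAEars} is unnecessary.)
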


\begin{proof}
Note that if $wt_A(w) \neq 0$, then by Lemma \ref{lem:ChebyshevGeometric}, $wt_A(w) \geq 1$. Therefore, if a frieze pattern has growth coefficient 2, then it has one or two matchings which have nonzero weight under $wt_A$. We claim that every skeletal dissection has at least two matchings which satisfy $wt_A(w) > 0$. We establish $w_{CW}$ to be the matching where each vertex uses the clockwise-most subgon it is incident to. In a skeletal dissection, a subgon of size $p$ can have up to $p-1$ vertices incident to the outer boundary. However, no subgon of size $p$ would appear $p-1$ times in $w_{CW}$. Thus, $wt_A(w_{CW}) > 0$. We can similarly define $w_{CCW}$ by choosing the counterclockwise-most subgon at each vertex. Since $w_{CW}$ and $w_{CCW}$ are distinct matchings, we know that for any infinite, realizable, and skeletal frieze pattern $\mathcal{F}$, $s_1 \geq 2$. 

In order for $s_1 = 2$, it must be that $wt_A(w_{CW}) = wt_A(w_{CCW}) = 1$ and for any other matching $w \in \pathset{i}{i+n+1}, wt_A(w) = 0$. This would require that, if $P$ is a subgon of the dissection, then $P$ has $\vert P \vert  - 1$ vertices on the outer boundary. Otherwise, we would have more options for nonzero matchings. Such a dissection is only possible on $S_n$. 

\begin{center}
\begin{tikzpicture}[scale = 1]
 \draw (0,0) circle (2\R);
\node[circle, fill = black, scale = 0.6] at (180:0\R){};
\node[circle, fill = black, scale = 0.3] at (0:2\R){};
\node[circle, fill = black, scale = 0.3] at (30:2\R){};
\node[circle, fill = black, scale = 0.3] at (285:2\R){};
\node[circle, fill = black, scale = 0.3] at (160:2\R){};
\node[circle, fill = black, scale = 0.3] at (200:2\R){};
\draw (120:2\R) to (0:0);
\draw (240:2\R) to (0:0);
\draw (60:2\R) to (0,0);
\draw(-30:2\R) to (0,0);
  \end{tikzpicture}
  \end{center}

For the converse direction, note that every skeletal dissection of $S_n$ looks as above; in the triangulation case, this is sometimes called a ``wheel''. When forming a matching of a skeletal dissection of $S_n$, once we choose whether to use the clockwise or counterclockwise most subgon at a vertex of degree 2 in a matching, the rest of the matching is forced since we must avoid using $p-1$ vertices of a size $p$ subgon. Such a matching uses a size $p$ subgon $p-2$ times, and $U_{p-2}(\lambda_p) = 1$; therefore, this matching will have weight 1.
\end{proof}

By Lemma \ref{lem:wtAEars}, the growth coefficient of a realizable frieze pattern is unaffected by cutting or gluing ears. Thus, we conclude with a more general Corollary. 

\begin{corollary}\label{cor:GrCoAllFrieze}
Let $\mathcal{F}$ be a frieze pattern of Type $\Lambda_{p_1,\ldots,p_s}$ which terminates in step 5 of the Realizability Algorithm. Then, $\mathcal{F}$ is realizable by a dissection of $S_n$ if and only if first growth coefficient of $\mathcal{F}$ is 2. 
\end{corollary}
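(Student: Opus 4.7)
The plan is to reduce to Proposition \ref{prop:GrCo2} by tracking both realizability and the principal growth coefficient through the sequence of cuts produced by the Realizability Algorithm. Since $\mathcal{F}$ terminates in step 5, there is a finite sequence of cuts producing $\mathcal{F} = \mathcal{F}_0, \mathcal{F}_1, \ldots, \mathcal{F}_k$ such that $\mathcal{F}_k$ is skeletal and satisfies Proposition \ref{prop:realizability}. The argument splits into two transfer claims along this chain, after which Proposition \ref{prop:GrCo2} closes the argument.

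First, I would use Lemma \ref{lem:CutRealizable} iteratively to verify that $\mathcal{F}$ is realizable by $S_n$ if and only if $\mathcal{F}_k$ is realizable by $S_{n_k}$, where $n_k$ is the period of $\mathcal{F}_k$. Part (1) of that lemma sends a realization of $\mathcal{F}_j$ by $S_{n_j}$ (respectively $A_{n_j,m}$) to one of $\mathcal{F}_{j+1}$ by $S_{n_j - (p-2)}$ (respectively $A_{n_j - (p-2),m}$), and part (2) gives the reverse passage by gluing an ear back. In particular, cutting an ear never exchanges the topological type of the realizing surface, so the $S_n$ versus $A_{n,m}$ dichotomy for $\mathcal{F}$ mirrors that for $\mathcal{F}_k$.

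Next, I would use Theorem \ref{thm:CombinatorialGrowthC} together with Lemma \ref{lem:wtAEars} to show that the principal growth coefficient of $\mathcal{F}$ equals that of $\mathcal{F}_k$. Theorem \ref{thm:CombinatorialGrowthC} expresses $s_1$ as a sum over length-$n$ matchings weighted by $wt_A$, and Lemma \ref{lem:wtAEars} shows this sum is unchanged when any ear is cut. Applying this at each step of the chain transports $s_1$ from $\mathcal{F}$ to $\mathcal{F}_k$ unchanged. Finally, since $\mathcal{F}_k$ is skeletal and satisfies Proposition \ref{prop:realizability}, Proposition \ref{prop:GrCo2} says $\mathcal{F}_k$ is realizable by $S_{n_k}$ if and only if its principal growth coefficient is $2$. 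Combining the three equivalences yields the corollary.

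The main obstacle is purely bookkeeping: one has to confirm that the topological type is never exchanged by a cut and that the growth-coefficient invariance of Lemma \ref{lem:wtAEars}, stated for a single cut, chains through the finitely many cuts performed by the Realizability Algorithm. Both points follow immediately from the cited lemmas, so no new construction is required and the proof reduces to assembling these pieces in the correct order.
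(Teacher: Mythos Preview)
Your proposal is correct and follows essentially the same route as the paper: the paper's one-line justification preceding the corollary is precisely that, by Lemma \ref{lem:wtAEars} (together with Theorem \ref{thm:CombinatorialGrowthC}), the growth coefficient is unaffected by cutting or gluing ears, which combined with Lemma \ref{lem:CutRealizable} and Proposition \ref{prop:GrCo2} yields the result. Your write-up is more explicit about chaining through the cuts, but the underlying argument is identical.
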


We have thus far only provided a combinatorial interpretation of the first growth coefficient for a frieze pattern $\mathcal{F}$ realizable by a dissection $\D$ of $S_n$ or $A_{n,m}$. For higher growth coefficients, one could consider $wt_A$ on matchings in the ``$k$-th power of $\D$''.

\begin{definition}\label{def:kthpower}
Let $\D$ be a dissection of $A_{n,m}$ or $S_n$, and let $I$ be the infinite strip which is the universal cover of the surface, with fundamental domain $F$. For $k \geq 1$, we construct \emph{the $k^{th}$ power of $\D$ on $A_{kn,km}$ or $S_{kn}$} by considering the dissected surface resulting in taking $k$-consecutive copies of $F$ as a fundamental domain. 
\end{definition}

\begin{example}
The righthand side is the second power of the dissection on the lefthand side.
\begin{center}
\begin{tabular}{cc}
\begin{tikzpicture}[scale = 0.8]
\draw (0,0) circle (2\R);
\draw (0,0) circle (0.5\R);
\coordinate(A) at (90:2\R);
\node[above] at (90:2\R){$a$};
\node[circle, fill = black, scale = 0.3] at (180:2\R){};
\node[left] at (180:2\R){$c$};
\coordinate(C) at (-90:2\R);
\node[below] at (-90:2\R){$b$};

\coordinate(B) at (-90:0.5\R);

\coordinate(E) at (90:0.5\R);

\draw(90:2\R) -- (90:0.5\R);
\draw(C) to [out = 135, in = -135](A);
\draw(C) to [out = 90, in = -90](B);
\end{tikzpicture}
&
\begin{tikzpicture}[scale = 0.8]
\draw (0,0) circle (2\R);
\draw (0,0) circle (.5\R);
\coordinate(A1) at (90:2\R);
\coordinate(A2) at (-90:2\R);
\node[above] at (90:2\R){$a_1$};
\node[circle, fill = black, scale = 0.3] at (135:2\R){};
\node[circle, fill = black, scale = 0.3] at (-45:2\R){};
\coordinate(C1) at (-45:2\R);
\node[below] at (-90:2\R){$a_2$};
\coordinate(B1) at (0:2\R);
\node[right] at (0:2\R){$b_1$};
\node[right] at (-45:2\R){$c_1$};
\coordinate(B2) at (180:2\R);
\node[left] at (180:2\R){$b_2$};
\node[left] at (135:2\R){$c_2$};
\coordinate(C2) at (135:2\R);
\coordinate(a) at (90:.5\R);
\coordinate(b) at (0:.5\R);
\coordinate(c) at (-90:.5\R);
\coordinate(d) at (180:.5\R);
\draw (A1) -- (a);
\draw (B1) -- (b);
\draw (A2) -- (c);
\draw (B2) -- (d);
\draw(A1) to [out= -150, in = 60](B2);
\draw(A2) to [out = 30, in = -120] (B1);
\end{tikzpicture}
\end{tabular}
\end{center}
\end{example}

If we instead consider $\F$ to have period $kn$, then the first growth coefficient is the same as the $k$-th growth coefficient of $\F$ when viewing it as $n$-periodic. Thus, we can recover higher growth coefficients of $\F$, realizable by $\D$, by using the $k$-th power of $\D$ and Theorem \ref{thm:CombinatorialGrowthC}.

\begin{corollary}
Let $\mathcal{F}$ be an infinite frieze pattern of Type $\Lambda_{p_1,\ldots,p_s}$ which is realizable by a dissection $\D$ of $A_{n,m}$ or $S_n$. Then for any $i \in \mathbb{Z}$,  \[
s_k = \sum_{w \in \pathset{i}{i+kn+1}} wt_A(w)
\]
where we sum over matchings in the $k$-th power of $\D$. 
\end{corollary}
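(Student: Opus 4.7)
The strategy is to reduce the statement to the $k=1$ case, which is Theorem \ref{thm:CombinatorialGrowthC}, by re-declaring the period of $\F$. First I would note that since $\F$ has period $n$, it also has period $kn$ for every $k \geq 1$. Writing $s^{(n)}_k$ and $s^{(kn)}_\ell$ for the growth coefficients of $\F$ measured with respect to periods $n$ and $kn$ respectively, Theorem \ref{thm:GrowthCoefficient} identifies both $s^{(n)}_k$ and $s^{(kn)}_1$ with the same constant difference $m_{i,i+kn+1} - m_{i+1,i+kn}$, so $s_k = s^{(n)}_k = s^{(kn)}_1$.

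Next, I would verify that the $k$-th power $\D^{(k)}$ of $\D$ realizes $\F$ in its $kn$-periodic guise. This step is largely bookkeeping: by Definition \ref{def:kthpower}, $\D^{(k)}$ is the dissection of $A_{kn,km}$ (or $S_{kn}$) whose fundamental domain is $k$ consecutive copies of the fundamental domain of $\D$, and the Holm--J{\o}rgensen algorithm applied to $\D^{(k)}$ produces precisely the quiddity sequence of $\F$ read off with period $kn$, because the incidence multisets along the outer boundary of $\D^{(k)}$ are the $k$-fold repetition of those of $\D$. Moreover $\D$ and $\D^{(k)}$ share the same underlying infinite strip as universal cover, so matchings in $\pathset{i}{i+kn+1}$ as in the statement coincide with matchings of length $kn$ in the universal cover of $\D^{(k)}$.

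With these two observations in hand, applying Theorem \ref{thm:CombinatorialGrowthC} with $\D^{(k)}$ in place of $\D$ and $kn$ in place of $n$ yields, for any $i \in \Z$, the identity $s^{(kn)}_1 = \sum_{w \in \pathset{i}{i+kn+1}} wt_A(w)$, where the annulus weighting is computed relative to $\D^{(k)}$. Combining with $s_k = s^{(kn)}_1$ gives the desired formula.

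The one subtlety I would flag---and the main place where a careless reader could go astray---is the precise meaning of $wt_A$ in this context: two lifts of a subgon of $\D$ that differ by a shift of between $1$ and $k-1$ fundamental domains of $\D$ are \emph{distinct} subgons of $\D^{(k)}$, so the annulus weighting relative to $\D^{(k)}$ is strictly finer than that relative to $\D$. A length-$kn$ matching wraps once around $\D^{(k)}$ but $k$ times around $\D$, and it is precisely this refined identification that makes the equality hold; using the coarser $wt_A$ relative to $\D$ would already fail for $k=2$ in simple examples.
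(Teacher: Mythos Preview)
Your proposal is correct and follows essentially the same approach as the paper: both re-declare the period of $\F$ as $kn$, identify $s_k$ with the first growth coefficient relative to this period, and then apply Theorem \ref{thm:CombinatorialGrowthC} to the $k$-th power $\D^{(k)}$. Your explicit verification that $\D^{(k)}$ realizes the $kn$-periodic quiddity sequence and your clarifying remark that $wt_A$ must be computed relative to $\D^{(k)}$ (not $\D$) make the argument more careful than the paper's brief justification, but the underlying idea is identical.
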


\subsection{Comparing Inner and Outer Frieze Patterns}\label{subsec:InAndOutFP}

Given a dissected annulus, we could consider the frieze pattern determined from the inner boundary instead of the outer boundary. To differentiate these, given a dissected annulus $A_{n,m}$, let $\mathcal{F}_{out}$ be the frieze pattern determined with respect to the outer boundary and let $\mathcal{F}_{in}$ be the frieze pattern determined with respect to the inner boundary. For example, for the dissection below, $\mathcal{F}_{out}$ is generated by the quiddity cycle $(1+2\sqrt{2},2 + 2\sqrt{2})$ and $\mathcal{F}_{in}$ is generated by $(1+2\sqrt{2}, \sqrt{2},2\sqrt{2},2\sqrt{2},\sqrt{2},1+\sqrt{2})$.

\begin{center}
\begin{tikzpicture}[scale = 0.9]
\draw (0,0) circle (2\R);
\draw (0,0) circle (0.5\R);
\coordinate(A) at (90:2\R);
\coordinate(B) at (-90:2\R);
\coordinate(a) at (180:0.5\R);
\coordinate(b) at (45:0.5\R);
\coordinate(c) at (-45:0.5\R);
\coordinate(d) at (-90:0.5\R);
\node[circle, fill = black, scale = 0.3] at (90:0.5\R){};
\node[circle, fill = black, scale = 0.3] at (-60:0.5\R){};
\draw(A) to [out = -150, in = 180](a);
\draw(B) to [out = 150, in = 180](a);
\draw(A) to [out = -30, in = 30](b);
\draw(B) to [out = 30, in = -30](c);
\draw(B)--(d);
\end{tikzpicture}
\end{center}

The goal in this section is to use our matching formula to give a combinatorial proof that, given a dissected annulus, the first growth coefficients of $\mathcal{F}_{in}$ and $\mathcal{F}_{out}$ are equal. By Theorem \ref{thm:GrowthCoefficientRecurrence}, it will follow that all growth coefficients of these two frieze patterns are equal. See \cite{3} for a module-theoretic approach to this problem for triangulated annuli. 

First, we record a useful lemma. This lemma is a consequence of the fact that all finite frieze patterns have glide symmetry, shown by Coxeter in \cite{11}. 

\begin{lemma}\label{lem:EitherDirection}
Let $\mathcal{F} = \{m_{i,j}\}$ be a finite frieze pattern of Type $\Lambda_{p_1,\ldots,p_s}$ which comes from a dissected polygon. Then, $m_{i,j} = m_{j,i+n}$, implying that \[
\sum_{w \in \pathset{i}{j}} wt_T(w) = \sum_{w \in \pathset{j}{i+n}} wt_T(w).
\]
\end{lemma}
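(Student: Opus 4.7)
The plan is to reduce the first equality to the classical glide reflection symmetry of finite frieze patterns, and then translate it into the matching identity by a single invocation of Corollary \ref{cor:TraditionalWeight}.

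For the identity $m_{i,j} = m_{j,i+n}$, I would appeal to Coxeter's original glide symmetry theorem from \cite{11}, which asserts that a finite frieze pattern of width $n-3$ satisfies $m_{i,j} = m_{j,i+n}$ for all admissible $(i,j)$. Coxeter's argument depends only on the unimodular rule together with the boundary conditions $m_{i,i}=0$, $m_{i,i+1}=1$, $m_{i,i+n-1}=1$, $m_{i,i+n}=0$; no property specific to the integers is used. Consequently, the same argument applies verbatim to any finite frieze pattern over the commutative ring $\mathbb{Z}[\lambda_{p_1},\ldots,\lambda_{p_s}]$, in particular to our frieze patterns of Type $\Lambda_{p_1,\ldots,p_s}$ arising from a dissected $n$-gon. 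This yields the first asserted equality.

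To derive the matching identity, I would simply combine the glide symmetry with the combinatorial interpretation in Corollary \ref{cor:TraditionalWeight}, which for a frieze pattern from a dissected polygon gives
\[
m_{i,j} = \sum_{w \in \pathset{i}{j}} wt_T(w) \qquad \text{and} \qquad m_{j,i+n} = \sum_{w \in \pathset{j}{i+n}} wt_T(w).
\]
Substituting these two expressions into $m_{i,j} = m_{j,i+n}$ produces the claimed equality of weighted sums.

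The main conceptual point, and the only place a reader might hesitate, is the assertion that Coxeter's glide symmetry extends beyond frieze patterns of positive integers. However, this is immediate upon inspection of the classical proof, since every step manipulates only the unimodular relation and the boundary rows; no division, positivity, or cancellation in $\mathbb{Z}$ is used. Consequently I would not expect any substantial obstacle, and the proof amounts to a short citation followed by an application of our earlier combinatorial interpretation.
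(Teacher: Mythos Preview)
Your proposal is correct and matches the paper's approach exactly: the paper simply states that the lemma is a consequence of Coxeter's glide symmetry from \cite{11}, and the matching identity then follows from the combinatorial interpretation. Your added remark that Coxeter's argument uses only the unimodular rule and boundary rows, and hence carries over to frieze patterns over $\mathbb{Z}[\lambda_{p_1},\ldots,\lambda_{p_s}]$, is a useful clarification that the paper leaves implicit.
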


\begin{remark}
\begin{enumerate}
    \item Another way to think about Lemma \ref{lem:EitherDirection} is that the weighted sum of matchings between vertices $v_i$ and $v_j$ is the same going clockwise or counterclockwise.
    \item We can strengthen Lemma \ref{lem:EitherDirection} to a weighted bijection between matchings in $\pathset{i}{j}$ and in $\pathset{i}{j}$ with nonzero $wt_T$. Given a dissection of a polygon $P$ into subgons $\poly{1},\ldots,\poly{k}$, form a multiset $S$ by taking $\vert \poly{i}\vert - 2$ copies of $\poly{i}$ for all $1 \leq i \leq k$. Then, the multiset of subgons used in any matching $w \in \pathset{i}{j}$ such that $wt_T(w) > 0$ is a subset of $S$. Call this subset $T_w$. One can show there is a unique matching $\bar{w} \in \pathset{j}{i}$ which uses the subgons in $S - T_w$. By repeated application of Lemma \ref{lem:ChebyshevGeometric}, these two matchings will always have the same weight. 
\end{enumerate}
\end{remark}

We are ready for the main result in this subsection.

\begin{theorem}\label{theorem:EqualGrCo}
Let $A_{n,m}$ be an annulus with dissection $\D$. Let $\mathcal{F}_{in}$ and $\mathcal{F}_{out}$ be the infinite frieze patterns with respect to the inner and outer boundaries respectively. Let $s_1^{in}$ and $s_1^{out}$ be the first growth coefficients of  $\mathcal{F}_{in}$ and $\mathcal{F}_{out}$ respectively. Then $s_1^{in}= s_1^{out}$.
\end{theorem}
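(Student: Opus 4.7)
My plan is to leverage the combinatorial interpretation of the first growth coefficient from Theorem \ref{thm:CombinatorialGrowthC} and construct a weight-preserving bijection between outer and inner matchings. First I would rewrite both sides as
\[
s_1^{out} = \sum_{w \in \pathset{0}{n+1}^{out}} wt_A(w), \qquad s_1^{in} = \sum_{w \in \pathset{0}{m+1}^{in}} wt_A(w),
\]
where the sums run over length-$n$ outer matchings and length-$m$ inner matchings in the universal covers of $(A_{n,m},\D)$ read from the outer and inner boundaries respectively. By Lemma \ref{lem:wtAEars} (and its direct analogue for inner ears), cutting an ear does not change either $s_1^{out}$ or $s_1^{in}$: an outer ear has all its vertices on the outer boundary so the inner combinatorics are untouched, and symmetrically for inner ears. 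Moreover, any peripheral arc cuts off a dissected polygon which necessarily contains an ear, so iteratively cutting ears eventually removes every peripheral arc and reduces the problem to the case where $\D$ is skeletal.

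In a skeletal dissection, every subgon $\mathcal{P}$ is bounded by exactly two bridging arcs together with some outer and inner boundary edges, so $|\mathcal{P}| = o(\mathcal{P}) + in(\mathcal{P})$, where $o(\mathcal{P})$ and $in(\mathcal{P})$ denote the numbers of outer and inner vertices incident to $\mathcal{P}$, counted with multiplicity if $\mathcal{P}$ is self-folded. A short Euler-characteristic calculation using $\chi(A_{n,m})=0$ yields
\[
\sum_{\mathcal{P} \in \D} (|\mathcal{P}| - 2) = n + m,
\]
which is the conservation law that makes a complementing bijection possible. I would then construct $\varphi : \pathset{0}{n+1}^{out} \to \pathset{0}{m+1}^{in}$ by swapping the chosen side at every bridging arc: if the outer matching $w$ at the outer endpoint of a bridging arc $e$ picks the subgon on one side of $e$, then $\varphi(w)$ at the inner endpoint of $e$ picks the subgon on the opposite side. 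A direct count then gives $N_{\mathcal{P}}^{out}(w) = (o(\mathcal{P}) - 2) + k$ and $N_{\mathcal{P}}^{in}(\varphi(w)) = (in(\mathcal{P}) - 2) + (2 - k)$, where $k \in \{0,1,2\}$ is the number of outer boundary-junctions of $\mathcal{P}$ that $w$ sends to $\mathcal{P}$. Hence $N_{\mathcal{P}}^{out} + N_{\mathcal{P}}^{in} = |\mathcal{P}| - 2$, and the Chebyshev symmetry $U_{|\mathcal{P}|-2-N}(\lambda_{|\mathcal{P}|}) = U_N(\lambda_{|\mathcal{P}|})$ from Lemma \ref{lem:ChebyshevGeometric} immediately gives $wt_A(w) = wt_A(\varphi(w))$, yielding $s_1^{out} = s_1^{in}$.

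The main obstacle is making the swap well-defined at junction vertices carrying several bridging arcs and at self-folded subgons, where the notions of ``two sides of a bridging arc'' and the counts $o(\mathcal{P}), in(\mathcal{P})$ must be read with multiplicity. I expect this to be cleanest in the universal cover (the lifted infinite strip), where every lifted bridging arc has two canonically defined subgon lifts on either side; the bijection constructed there descends via the covering map, and the cyclic invariance of $wt_A$ from Lemma \ref{lem:AnnulusWeightInvariant} can be used to align fundamental-domain sums on the two boundaries.
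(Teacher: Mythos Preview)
Your plan is a genuinely different route from the paper's. After the same reduction to skeletal dissections, the paper does \emph{not} build a direct outer/inner bijection. Instead it cuts the annulus along a fixed bridging arc $\tau$ to obtain the fundamental-domain polygon $F$, observes that the outer annulus matchings (under $wt_A$) lift to polygon matchings in $F$ from $v_1$ to $w_1$ (split according to which side of $\tau$ is used at the end), and that the inner annulus matchings lift to polygon matchings in $F$ from $w_1$ to $v_1$. The equality $s_1^{out}=s_1^{in}$ then follows from Coxeter's glide symmetry of finite friezes (Lemma~\ref{lem:EitherDirection}), i.e.\ $\sum_{\pathset{v_1}{w_1}}wt_T=\sum_{\pathset{w_1}{v_1}}wt_T$. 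So the paper trades an explicit bijection for an appeal to a known polygon fact; your complementary-matching idea (which the paper also alludes to in the Remark after Lemma~\ref{lem:EitherDirection}) is more explicit and explains \emph{why} the weights match via $U_k(\lambda_p)=U_{p-2-k}(\lambda_p)$.

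That said, your proposal has one framing error and one real gap. First, $\varphi$ cannot be a bijection between the full sets $\pathset{0}{n+1}^{out}$ and $\pathset{0}{m+1}^{in}$: when $n\neq m$ these have different cardinalities (e.g.\ in $A_{2,1}$ with two bridging arcs there are four outer matchings but three inner ones). What you actually want is a bijection between the \emph{nonzero}-weight matchings on each side; the side-swap is simply inconsistent precisely when some $N_{\mathcal P}^{out}>|\mathcal P|-2$. Second---and this is the substance---the consistency of the side-swap at an inner vertex $w$ carrying several bridging arcs, which you flag as ``the main obstacle'', is the whole argument, and you have not proved it. Concretely, if $e_1,\dots,e_r$ are the arcs at $w$ in order, each swap imposes a half-fan constraint (``left of $e_i$'' or ``right of $e_i$'') on the subgons at $w$; you must show these $r$ constraints cut out exactly one subgon. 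This holds precisely for nonzero-weight outer matchings, but proving the required monotonicity of the swap pattern along $e_1,\dots,e_r$ (and the analogous statement showing $\varphi$ is surjective onto nonzero-weight inner matchings) is work you still owe. Once you do it, your argument is complete and arguably more transparent than the paper's; without it, the key identity $N_{\mathcal P}^{out}+N_{\mathcal P}^{in}=|\mathcal P|-2$ is asserted for an object $\varphi(w)$ whose existence has not been established.
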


\begin{proof}
From Lemma \ref{lem:wtAEars}, the sum $\sum_{w \in \pathset{i}{i+n+1}} wt_A(w)$ is unchanged by cutting ears from the surface. Therefore, we can consider only the case where $\D$ is skeletal.

Recall that in our construction for the infinite cover of a dissection of $A_{n,m}$ in Section \ref{subsec:InfStrip}, we picked a bridging arc $\tau$, cut along this arc to form a dissected $(n+m+2)$-gon, then glued copies of this subgon together along $\tau$ to form a dissected infinite strip. Call the dissected polygon $F$. Let $v_1,w_1$ be the end points of $\tau$ on the outer and inner boundary respectively, and recall our labeling of vertices of $F$ as below. 

\begin{center}
\begin{tabular}{lccr}
    \begin{tikzpicture}[scale = 1.2]
\draw (0,0) circle (2\R);
\draw (0,0) circle (0.75\R);
\node[above] at (60:2\R){$v_n$};
\node[above] at (90:2\R){$v_1$};
\node[above] at (120:2\R){$v_2$};
\node[below] at (90:.75\R){$w_1$};
\node[left] at (180:.75\R){$w_2$};
\node[left] at (95:1.3\R){$\tau$};

\draw(90:2\R) -- (90:0.75\R);
\draw(30:2\R) -- (90:0.75\R);

\end{tikzpicture}
&&&
    \begin{tikzpicture}[scale = 1.2]
    \draw(0,0) -- (0,2) -- (5,2) -- (5,0) -- (0,0);
    \draw[ultra thick](1,0) -- (4,0);
    \draw[ultra thick] (1,2) -- (4,2);
    \draw[ultra thick](1,0) to node[left]{$\tau$} (1,2);
    \draw[ultra thick](4,0) to node[right]{$\tau$} (4,2);
    \draw(1,2) to (2,0);
    \draw(4,2) to (5,0);
    \node[below] at (1,0) {$v_1^1$};
    \node[below] at (1.5,0) {$v_2^1$};
    \node[below] at (0.5,0) {$v_n^0$};
    \node[below] at (1+3,0) {$v_1^2$};
    \node[below] at (1.5+3,0) {$v_2^2$};
    \node[below] at (0.5+3,0) {$v_n^1$};
    \node[above] at (1,2){$w_1^1$};
    \node[above] at (1.5,2){$w_2^1$};
    \node[above] at (1+3,2){$w_1^2$};
    \node[above] at (1.5+3,2){$w_2^2$};
    \node[] at (2.5,1){$\cdots$};
    \end{tikzpicture}
    \end{tabular}
\end{center}

By Theorem \ref{thm:CombinatorialGrowthC}, $s_1^{out} = \sum_{w \in \pathset{0}{n+1}^{out}} wt_A(w)$ and $s_1^{in} = \sum_{w \in \pathset{0}{m+1}^{in}} wt_A(w)$, where the second sum is over matchings on the inner boundary. 
We will see that the set of matchings $\pathset{0}{n+1}^{out}$ and $\pathset{0}{m+1}^{in}$  come from several sets of matchings on $F$.

Consider first the sets $\pathset{v_1^1}{w_1^2}$ and $\pathset{w_1^2}{v_1^1}$, where these matchings occur on $F$. A matching $\bar{w} \in \pathset{v_1^1}{w_1^2}$ lifts to a matching on the outer boundary of the annulus, $w \in \pathset{1}{n+2}^{out}$ such that $w[n+1:n+1]$ is a subgon incident to $v_1$ counterclockwise of $\tau$. Note that $wt_T(\bar{w}) = wt_A(w)$. 
A matching $\bar{u} \in \pathset{w_1^2}{v_1^1}$ lifts to a matching in clockwise order on the inner boundary of the annulus, $u = \poly{m},\ldots,\poly{1}$ such that $\poly{1}$ is a subgon counterclockwise of $\tau$. Since reversing the order of a matching will not affect its weight, we see that the matchings $\bar{u}$ lift to the subset of $w \in \pathset{0}{m}^{in}$ where $w[1:1]$ is a subgon counterclockwise of $\tau$, and for each $\bar{u}$, $wt_T(\bar{u}) = wt_A(u)$. 

We can similarly compare $\pathset{w_1^1}{v_1^2}$ and $\pathset{v_1^2}{w_1^1}$. A matching $\bar{w} \in \pathset{v_1^2}{w_1^1}$ will lift to a matching on the outer boundary in clockwise order. When we reverse the order, we have a matching in $w \in \pathset{0}{n+1}^{out}$ where $w[1:1]$ is a subgon clockwise of $\tau$. Moreover, by Lemma \ref{lem:AnnulusWeightInvariant}, we can cyclically shift $w$ so that $w \in \pathset{1}{n+2}^{out}$; now, $w[n+1,n+1]$ is a subgon clockwise of $\tau$. We have again $wt_T(\bar{w}) = wt_A(w)$. 

A matching $\bar{u} \in \pathset{v_1^2}{w_1^1}$ will lift to a matching $u \in \pathset{1}{n+2}^{in}$ such that  $u[n+1:n+1]$ is a subgon counterclockwise of $\tau$. We shift $u$ so that $u \in \pathset{0}{m}^{in}$, now with $u[1:1]$ a subgon clockwise of $\tau$. We have that $wt_T(\bar{u}) = wt_A(u)$.

We can moreover see that, by following this procedure, every matching $w \in \pathset{1}{n+2}^{out}$ is a lift of a unique matching in either $\pathset{v_1^2}{w_1^1}$ or $\pathset{v_1^1}{w_1^2}$, depending on which side of $\tau$ the subgon $w[n+1:n+1]$ lies. Since our lifts also preserved weight, we have that \[
\sum_{w \in \pathset{v_1^1}{w_1^2}} wt_T(w) + \sum_{w \in \pathset{v_2^1}{w_1^1}} wt_T(w) = \sum_{w \in \pathset{1}{n+2}^{out}} wt_A(w)
\]

We can similarly conclude \[
\sum_{w \in \pathset{w_1^2}{v_1^1}} wt_T(w) + \sum_{w \in \pathset{w_1^1}{v_2^1}} wt_T(w) = \sum_{w \in \pathset{0}{m}^{in}} wt_A(w)
\]

By Lemma \ref{lem:EitherDirection}, we have \[
\sum_{w \in \pathset{v_1^1}{w_1^2}} wt_T(w) + \sum_{w \in \pathset{v_2^1}{w_1^1}} wt_T(w) = \sum_{w \in \pathset{w_1^2}{v_1^1}} wt_T(w) + \sum_{w \in \pathset{w_1^1}{v_2^1}} wt_T(w) 
\]
and by our combinatorial interpretation of $s_1$ in Theorem \ref{thm:CombinatorialGrowthC}, the claim of the Theorem follows.

\end{proof}

\section{Connection To T-Paths}

In the case of triangulated surfaces, it is known that matchings are in bijection with several other combinatorial objects. One such set of objects is \emph{T-paths}. 

T-paths on polygons were first developed in \cite{18} to give a combinatorial proof of positive of cluster variables in type $A$. See \cite{12} for the definition of a cluster algebra.  Carroll and Price show a bijection between T-paths, with all edges weight 1, and BCI tuples in the case of a polygon \cite{9}, \cite{17}. The definition of T-path was expanded to arbitrary triangulated surfaces in \cite{19} and \cite{16}.

Recently, \c{C}anak\c{c}i and J{\o}rgensen extended the definition of T-paths to the case of dissected polygon. We give their definition below. 
\begin{definition}\label{def:Tpath}[Weak T-path in a Dissected Polygon \cite{8}]
Given an $n$-gon with vertices $v_1,\ldots,v_n$ and dissection $\D$, let $v_i,v_j$ be a distinct pair of vertices. A \emph{weak T-path}, $\alpha$, from $v_i$ to $v_j$ is a walk $\alpha_1,\ldots,\alpha_{2k+1}$, with the following properties. \begin{enumerate}
    \item No step $\alpha_i$ crosses an arc in $\D$.
    \item Each $\alpha_i$ is equipped with an orientation, such that
    \begin{itemize}
    \item $s(\alpha_1) = v_i$
    \item $t(\alpha_{2k+1}) = v_j$
    \item For all $1 \leq i < 2k+1$, $t(\alpha_i) = s(\alpha_{i+1})$.
    \end{itemize}
    \item The even steps, $\alpha_{2\ell}$, each cross $(v_i,v_j)$.
    \item For all $\ell$, the crossing point of $(v_i,v_j)$ and $\alpha_{2\ell}$ is closer to $v_i$ then the crossing point of $(v_i,v_j)$ and $\alpha_{2(i+1)}$.
    \item The steps $\alpha_i$, considered without orientation, are pairwise distinct.
\end{enumerate}
\end{definition}

The third condition of Definition \ref{def:Tpath} explains why a weak T-path must have odd length while the fourth and fifth prevent backtracking. From now on, we will simply refer to these as T-paths,

\begin{remark}\label{rem:completeTpath}
One can equivalently define \emph{complete T-paths}. In a polygon $P$ with vertices $v_1,\ldots,v_n$ and dissection $\D$, consider the diagonal $(v_i,v_j)$. Orienting this diagonal from $v_i$ to $v_j$, let $\tau_1,\ldots,\tau_d$ be the ordered list of arcs of $\D$ that this diagonal crosses. Then a complete T-path $\alpha$ from $v_i$ to $v_j$ is a walk $\alpha_1,\ldots,\alpha_{2d+1}$ which satisfies items 1 and 2 in Definition \ref{def:Tpath} as well as 
\begin{enumerate}
    \item[3'] The step $\alpha_{2\ell}$ goes along $\tau_\ell$.
\end{enumerate}

A complete T-path will often violate condition 5 in Definition \ref{def:Tpath} as two consecutive steps may go along the same edge in $\D$. 
\end{remark}

If we set a weight on each diagonal of a polygon (not just those in the dissection), we can weight T-paths by taking a ratio of weights on the diagonals making up the T-path. 

\begin{definition}[Weight of a T-path]
Given an $n$-gon with vertices $v_1,\ldots,v_n$ and dissection $\D$, and a T-path $\alpha$ from $v_i$ to $v_j$, $\alpha = \alpha_1,\ldots,\alpha_{2k+1}$, we define the weight of $\alpha$ as \[
wt(\alpha) = \frac{\displaystyle\prod_{j = 0}^k wt(\alpha_{2j+1})}{\displaystyle\prod_{j=1}^k wt(\alpha_{2j})}
\]
\end{definition}

Given a polygon $P$ with dissection $\D$, we will set $wt((v,w)) = U_k(\lambda_p)$ whenever $(v,w)$ is a $k$-diagonal of a subgon of size $p$. Note that $U_0(x) = 1$, so this weighting sets all diagonals in $\D$ and all boundary edges to weight 1. 

\begin{remark}
In \cite{8}, the authors define a map $f$ on diagonals of a polygon to be a \emph{frieze} if it satisfies the Ptolemy relation: given vertices on a polygon $a,b,c,d,$  which appear in this cyclic order, so that $(a,c)$ and $(b,d)$ are crossing, $f$ satisfies, \[
f((a,c))f((b,d)) = f((a,b))f((c,d)) + f((a,d))f((b,c))
\]

They define a \emph{weak frieze} to be a map which satisfies the Ptolemy relation when $(b,d) \in D$. 

One of their main results (Theorem A) is that a map $f$ on diagonals of a polygon $P$ is a weak frieze if and only if it satisfies the $T$-path formula. However, our choice of weighting will in fact be a \emph{frieze}, as it is equivalent to the frieze pattern discussed in \cite{14}.
\end{remark}

Our main result of this section is that there is a bijection between T-paths in a dissected polygon and matchings with nonzero traditional weight. Denote the set of T-paths from $v_i$ to $v_j$, where these vertices are distinct, as $\textbf{T}_{i,j}$.

\begin{proposition}\label{prop:Tpath}
Let $P$ be a polygon with vertices $v_1,\ldots,v_n$. Let $v_i,v_j$ be any pair of distinct vertices. Then, there is a bijection, $\Phi: \{w \in \pathset{i}{j}: wt_T(w) \neq 0 \} \to \textbf{T}_{i,j}$ such that, if $\Phi(w) = \pi$, then $wt_T(w) = wt(\pi)$.
\end{proposition}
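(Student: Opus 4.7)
My plan is to construct an explicit bijection $\Phi$ by a local analysis on each subgon crossed by the diagonal $(v_i,v_j)$, then verify the weight equality factor-by-factor. Let $\mathcal{P}^{(1)},\ldots,\mathcal{P}^{(s)}$ be the subgons that $(v_i,v_j)$ passes through in order from $v_i$ to $v_j$, and let $\tau_1,\ldots,\tau_{s-1}$ be the $\D$-arcs crossed, with $\tau_r$ separating $\mathcal{P}^{(r)}$ from $\mathcal{P}^{(r+1)}$. As a preliminary reduction I would dispose of ears: if an ear $\mathcal{E}$ of $\D$ has all of its $|\mathcal{E}|-2$ interior vertices strictly between $v_i$ and $v_j$ counterclockwise, any $w$ with $wt_T(w)\neq 0$ must assign $\mathcal{E}$ to each such vertex (they lie in no other subgon), contributing only the factor $U_{|\mathcal{E}|-2}(\lambda_{|\mathcal{E}|})=1$ to the weight. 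Cutting this ear produces a smaller polygon in which the bijection is equivalent, so I may assume every top vertex belongs to some $\mathcal{P}^{(r)}$.

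To define $\Phi$, given a matching $w=(\poly{i+1},\ldots,\poly{j-1})$ with $wt_T(w)\neq 0$, let $k_r=|\{k : \poly{k}=\mathcal{P}^{(r)}\}|$; the nonvanishing-weight hypothesis guarantees $0\le k_r\le |\mathcal{P}^{(r)}|-2$. I take $\Phi(w)$ to be the T-path whose $r$-th odd step $\beta_r$ is the $k_r$-diagonal of $\mathcal{P}^{(r)}$ connecting the entry vertex (either $v_i$ for $r=1$, or an endpoint of $\tau_{r-1}$ otherwise) to the exit vertex (either $v_j$ for $r=s$, or an endpoint of $\tau_r$ otherwise), chosen so that the $k_r$ vertices skipped by $\beta_r$ on its counterclockwise side inside $\mathcal{P}^{(r)}$ are exactly the top vertices assigned to $\mathcal{P}^{(r)}$ in $w$. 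This forces the orientations of the $\tau_r$ and thereby determines the entire T-path. The inverse $\Phi^{-1}$ reads a matching off a T-path $\pi$ as follows: each odd step $\beta_r$ skips some number of top vertices on its counterclockwise side, and all of them get assigned $\mathcal{P}^{(r)}$.

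The weight equation then follows tautologically: each $\beta_r$ is a $k_r$-diagonal of $\mathcal{P}^{(r)}$ and so carries weight $U_{k_r}(\lambda_{|\mathcal{P}^{(r)}|})$, matching the corresponding factor in $wt_T(w)=\prod_r U_{k_r}(\lambda_{|\mathcal{P}^{(r)}|})$, while each even step $\tau_r$ is a $0$-diagonal of weight $U_0=1$ and so contributes trivially to the ratio defining $wt(\pi)$.

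The main obstacle I anticipate is the consistency check at each $\tau_r$: verifying that the endpoint of $\beta_r$ lying on $\tau_r$ coincides with the starting endpoint of $\beta_{r+1}$, so that concatenating the $\beta_r$'s with the $\tau_r$'s yields a legitimate walk satisfying conditions (1)--(5) of Definition \ref{def:Tpath}. This reduces to showing that the ``side'' (above or below $(v_i,v_j)$) chosen for the assigned vertices in $\mathcal{P}^{(r)}$ propagates consistently to $\mathcal{P}^{(r+1)}$; the key compatibility input is that consecutive entries of a matching with nonzero $wt_T$ must share a common subgon, and distinctness of steps follows because each $\tau_r$ is used only once and no two $\beta_r$ can coincide (they lie in different subgons).
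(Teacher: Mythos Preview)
Your bijection $\Phi$ is the same one the paper constructs: in both cases a matching $w$ with $k_r$ uses of the $r$-th subgon is sent to the complete T-path whose odd step inside that subgon leaves exactly $k_r$ of its vertices on the ``top'' side. The paper establishes well-definedness by induction on the number $d$ of crossed arcs, peeling off the last subgon $Q_d$ according to whether $w$ assigns $Q_d$ at the top endpoint of $\tau_d$; this makes the endpoint-matching at each $\tau_r$ automatic.

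Your direct construction is correct in spirit but the consistency check you flag is not resolved by the input you cite. The claim that ``consecutive entries of a matching with nonzero $wt_T$ must share a common subgon'' is false as stated: at a fan vertex $v$ where three subgons $\mathcal{P}^{(r-1)},\mathcal{P}^{(r)},\mathcal{P}^{(r+1)}$ meet, a nonzero-weight matching may assign $\mathcal{P}^{(r-1)}$ at $v-1$ and $\mathcal{P}^{(r+1)}$ at $v$, so that $\mathcal{P}^{(r)}$ is used zero times and the two consecutive entries are non-adjacent subgons. What actually makes the construction work is that the top vertices of $\mathcal{P}^{(r)}$ which are \emph{not} endpoints of $\tau_{r-1}$ or $\tau_r$ lie in no other subgon and are therefore forced to be assigned $\mathcal{P}^{(r)}$; hence the vertices assigned to $\mathcal{P}^{(r)}$ form a contiguous block in the counterclockwise boundary of $\mathcal{P}^{(r)}$, and the two vertices flanking that block are automatically endpoints of $\tau_{r-1}$ and $\tau_r$ respectively. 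In the fan case this still lands you on $\tau_r$ because the shared top endpoint belongs to both $\tau_{r-1}$ and $\tau_r$. With this correction your direct argument goes through, but as written the consistency step has a genuine gap.
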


\begin{proof}
Throughout this proof, we will assume that $(v_i,v_j)$ crosses all arcs in $\D$. For both a matching and a T-path, if this was not the case, we could work with the portion of $P$ which is only dissected by arcs which cross $(v_i,v_j)$, then paste the remaining subgons. Accordingly, let $\tau_1,\ldots,\tau_d$ be the arcs in $\D$, with order imposed by the order in which the arc $(v_i,v_j)$, oriented from $v_i$ to $v_j$, crosses them. By the same procedure we also index the subgons of the dissection in order, $Q_0,\ldots,Q_d$. By our assumption that $(v_i,v_j)$ crosses all arcs in $\D$, $Q_0$ and $Q_d$ are ears. 

\begin{figure}
    \centering
\begin{tikzpicture}[scale = 1.7]
\node[circle,  fill = black, scale = 0.1](P0) at (3,-1){};
\node[circle, fill = black, scale = 0.1](P1) at (4,-1){};
\node[circle, fill = black, scale = 0.1](P2) at (5,-1){};
\node[circle, fill = black, scale = 0.1](P3) at (6,-1){};
\node[left](sg) at (0,0){$v_i$};
\node[right](tg) at (9,0){$v_j$};
\draw[dashed, red] (sg) -- (tg);
\draw(0,0) -- (1,-1) -- (2,-1) -- (5,-1) -- (6,-1) -- (7,-1) -- (8,-1) -- (9,0) -- (8,1) -- (7,1) -- (5,1) -- (3,1) -- (1,1) -- (0,0);
\draw(1,1) -- (2,-1);
\draw(3,1) -- (2,-1);
\draw(5,1) -- (5,-1);
\draw(5,1) -- (6,-1);
\draw(7,1) -- (7,-1);
\draw(7,-1) -- (6,1);
\node[below] at (7,-1) {$v_a$};
\node[above] at (7,1) {$v_b$};
\node[] at (1, -0.5) {$Q_0$};
\node[] at (2,0.5) {$Q_1$};
\node[] at (4,-0.5){$Q_2$};
\node[] at (5.3,-0.5){$Q_3$};
\node[] at (7.8,-0.5) {$Q_{d}$};
\node[] at (6.2,-0.5){$\cdots$};
\node[] at (6.7, 0.5) {$Q_{d-1}$};
\end{tikzpicture}
    \caption{A typical configuration for a diagonal in a dissected polygon. We only consider the subgons which the diagonal crosses.}
    \label{fig:my_label}
\end{figure}
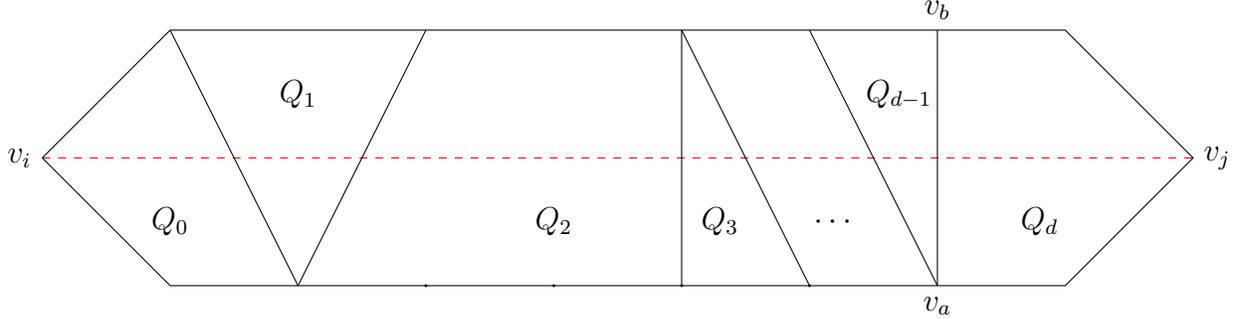

We define the image $\Phi(w)$ with the following claim.

\textbf{Claim:} Given a matching $w \in \pathset{i}{j}$, there exists a complete T-path $\alpha = \alpha_1,\ldots,\alpha_{2d+1}$ such that the number of vertices of $Q_i$ to the left of $\alpha_{2i+1}$ is equal to the number of occurrences of $Q_i$ in $w$.

\begin{proof}[Proof of Claim]
We prove the claim by induction on $d$. First, let $d = 0$. Then, our dissection is an empty dissection, with one subgon $P$. There is only one matching in $\pathset{i}{j}$, which uses $P$ $j-i-1$ times. Similarly, there is one T-path between $v_i$ and $v_j$, which simply consists of the diagonal $(v_i,v_j)$. This arc has $j-i-1$ vertices of $P$ to its left. 

For the inductive step, assume the claim is true for any arc which crosses less than $d$ arcs, and let $(v_i,v_j)$ cross $d$ arcs. Let $Q_d$ be the subgon which has vertex $v_j$. Let $v_a,v_b$ be the vertices of $\tau_d$, so that $v_a$ appears when traveling counterclockwise from $v_i$ to $v_j$. For convenience, relabel the vertices so that $i < a < j < b$.

Given any $w \in \pathset{i}{j}$, either $w[a:a] = Q_d$ or $w[a:a] \neq Q_d$. In the former case, we can consider $w[i+1:a-1]$ as a matching in $\pathset{i}{a}$. The arc $(v_i,v_a)$ crosses $k < d$ arcs. Thus, there is a T-path $\alpha' = \alpha_1,\ldots,\alpha_{2k+1}$ with the property that for all $1 \leq \ell \leq k$, the number of vertices of $Q_\ell$ to the left of $\alpha_{2\ell + 1}$ is equal to the number of occurrences of $Q_\ell$ in $w[i+1:a-1]$. We extend $\alpha'$ to a T-path $\alpha \in \textbf{T}_{i,j}$. For all $k < \ell \leq d$, we set $\alpha_{2\ell}$ to be the arc $\tau_\ell$ oriented away from $a$. If $\ell < d$, we set $\alpha_{2\ell + 1}$ to be $\tau_{\ell}$ oriented towards $a$. Finally, we set $\alpha_{2d + 1}$ to be the diagonal from $v_b$ to $v_j$. Such a T-path has zero vertices of $Q_{\ell}$ to the left of $\alpha_{2\ell + 1}$ for all $k < \ell < d$ and has $j - a > 0$ vertices of $Q_d$ to the left of $\alpha_{2d + 1}$. Together with the inductive hypothesis, we have constructed a T-path with the appropriate conditions.

Next, given any T-path $w \in \pathset{i}{j}$ with $w[a:a] \neq Q_d$, we consider $w[i+1:a]$ as a T-path between $v_i$ and $v_b$ in the polygon resulting from cutting all subgons not crossed by $(v_i,v_b)$. Since the diagonal $(v_i,v_b)$ crosses less than $d$ arcs, we can use the inductive hypothesis to construct a T-path associated to $w[i+1:a]$ with the conditions in the claim. By a parallel argument to the previous case, we can extend this to a T-path from $v_i$ to $v_j$ with the desired conditions.
\end{proof}

Given a matching $w \in \pathset{i}{j}$, we define $\Phi(w)$ to be the complete T-path described in the claim. Given two distinct matchings $w,w'$, the images $\Phi(w)$ and $\Phi(w')$ will be distinct T-paths since, for at least one value $1 \leq k \leq d$, the steps at index $2k+1$ in the two T-paths will be distinct. For surjectivity, given a complete T-path $\alpha$, we can construct a matching $w \in \pathset{i}{j}$ so that $\alpha$ satisfies the hypotheses of the claim for $w$. A proof of this would follow a similar induction as the proof of the Claim.

We check that $wt_T(w) = wt(\Phi(w))$. If $w$ uses subgon $Q_k$ $m$ times, the contribution to $wt_T(w)$ is $U_{m}(\lambda_{\vert Q_k \vert})$. In $\Phi(w)$, step $\alpha_{2k+1}$ will cross $Q_k$ and have $m$ vertices to its left. Thus, $\alpha_{2k+1}$ is a $m$-diagonal in a regular $\vert Q_k \vert$-gon, so its contribution to $wt(\Phi(w))$ is also $U_{m}(\lambda_{\vert Q_k \vert})$.
\end{proof}

\section{Positivity}\label{sec:Positivity}

Since $wt_T(w) \geq 0$ for any matching $w$, it follows from  Theorem \ref{thm:EqualWeights} and Corollary \ref{cor:TraditionalWeight} that every frieze pattern from a dissection of a once-punctured disc or annulus has all positive entries.

\begin{corollary}\label{cor:TradWeightPositive}
If $\mathcal{F} = \{m_{i,j}\}$ is an infinite frieze pattern from a dissected annulus or once-punctured disc, then for all $i < j$, $m_{i,j} > 0$.
\end{corollary}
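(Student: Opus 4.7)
The plan is to combine the matching formula of Corollary \ref{cor:TraditionalWeight} with the Chebyshev estimates of Lemma \ref{lem:ChebyshevGeometric}. By Corollary \ref{cor:TraditionalWeight}(2), for $i < j$ we have
\[
m_{i,j} = \sum_{w \in \pathset{i}{j}} wt_T(w),
\]
where matchings are taken in the universal cover of the realizing surface. By Definition \ref{def:TradWeight}, every $wt_T(w)$ is either $0$ (when some subgon $\mathcal{P}$ is used more than $|\mathcal{P}| - 2$ times) or a product $\prod_{\mathcal{P}} U_{k_\mathcal{P}}(\lambda_{|\mathcal{P}|})$ with each $k_\mathcal{P} \in [0, |\mathcal{P}| - 2]$; Lemma \ref{lem:ChebyshevGeometric}(2) guarantees every such Chebyshev factor is at least $1$. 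Hence $wt_T(w) \geq 0$ for every $w$, and $m_{i,j} \geq 0$.

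To upgrade to strict positivity, it suffices to exhibit one matching $w^* \in \pathset{i}{j}$ with $wt_T(w^*) > 0$. For $j = i+1$, this is immediate since $\pathset{i}{i+1}=\{\emptyset\}$ and $wt_T(\emptyset) = 1$. For $j \geq i+2$, I would construct $w^*$ from the combinatorial diagonal between $v_i$ and $v_j$ in the universal cover: let $\tau_1, \ldots, \tau_d$ be the arcs this diagonal crosses in order, dividing the traversed region into subgons $Q_0, Q_1, \ldots, Q_d$ with $v_i \in Q_0$ and $v_j \in Q_d$. At each $v_k$ with $i+1 \leq k \leq j-1$, set $\mathcal{P}^*_k = Q_\ell$ when $v_k$ is a vertex of $Q_\ell$ (with $\ell$ determined by $v_k$'s column along the diagonal), and set $\mathcal{P}^*_k = E$ when $v_k$ lies strictly inside some ear $E$ that the diagonal does not meet.

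The main obstacle is verifying the $|\mathcal{P}| - 2$ usage bound for every subgon. Each ear $E$ of size $p$ is used exactly at its $p - 2$ interior outer vertices (forced, since each such vertex is incident only to $E$), contributing the factor $U_{p-2}(\lambda_p) = 1$. Each $Q_\ell$ is used once per outer vertex of $Q_\ell$ lying strictly between $v_i$ and $v_j$, and a direct count shows this number is at most $|Q_\ell| - 2$: the surplus of two comes from additional vertices of $Q_\ell$ outside the range $[i+1, j-1]$, for instance $v_i$ or $v_j$ themselves, or the upper endpoints of the flanking crossed arcs $\tau_\ell, \tau_{\ell+1}$ (lying on the inner boundary or at the puncture when these arcs are bridging). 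Crucially, the required slack is always available because in $A_{n,m}$ or $S_n$ the two regions bounded by any non-boundary arc cannot both be ears, since one of them must contain the inner boundary or puncture. Consequently every Chebyshev factor of $wt_T(w^*)$ is $\geq 1$, so $wt_T(w^*) > 0$ and $m_{i,j} > 0$.
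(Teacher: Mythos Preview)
Your overall strategy coincides with the paper's: use Corollary~\ref{cor:TraditionalWeight} and Lemma~\ref{lem:ChebyshevGeometric} to get $wt_T(w)\ge 0$, then exhibit a single matching of strictly positive weight. The execution, however, has two genuine gaps.

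First, your two cases for $\mathcal{P}^*_k$ are not exhaustive. If a peripheral arc $(v_a,v_b)$ with $i<a<b<j$ is not crossed by $(v_i,v_j)$ and the pocket it cuts off is further dissected, then any $v_k$ with $a<k<b$ that is an endpoint of an interior arc is neither a vertex of any $Q_\ell$ nor ``strictly inside'' an ear, so your rule assigns it nothing.

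Second, the inequality you assert, that $Q_\ell$ has at most $|Q_\ell|-2$ outer vertices strictly between $v_i$ and $v_j$, is false when the flanking arcs share an inner endpoint. Take bridging arcs $\tau_\ell=(v_a,w)$ and $\tau_{\ell+1}=(v_b,w)$ with $i<a<b<j$, so $Q_\ell$ has vertex set $\{v_a,v_{a+1},\ldots,v_b,w\}$. Then $|Q_\ell|-2=b-a$, but all $b-a+1$ outer vertices $v_a,\ldots,v_b$ lie in $(i,j)$. Your ``surplus of two'' collapses to a surplus of one because both flanking arcs contribute the same vertex $w$. A correct count therefore requires a specific tie-breaking rule at the endpoints of the $\tau_m$ (e.g.\ always assign such a vertex to the rightmost $Q_\ell$ containing it) together with a separate argument for $Q_d$, not merely the observation that two flanking arcs exist.

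The paper avoids both issues by first invoking the ear-cutting invariance of $\sum wt_T$ established in the proofs of Theorems~\ref{thm:EqualWeights} and~\ref{thm:CombinatorialGrowthC} (Equations~\eqref{eq:CutTradWeight} and~\eqref{eq:TradWeightDiffEar}) to reduce to the situation where no arc of $\D$ has both endpoints in $[v_i,v_j]$. In that reduced configuration every subgon has at most $|\mathcal{P}|-2$ vertices strictly between $v_i$ and $v_j$, so the counterclockwise-most choice $\mathcal{P}^*_k=$ (subgon containing the boundary edge $(v_k,v_{k+1})$) works immediately. If you wish to keep a direct construction, you must both handle pockets recursively and tighten the $Q_\ell$ count as above.
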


\begin{proof}
We must show that, for all $i < j$, there is at least one matching $w \in \pathset{i}{j}$ with $wt_T(w) > 0$. From Equation  \ref{eq:CutTradWeight}, we know that if any edge from $\mathcal{D}$ is strictly between $v_i$ and $v_j$, so that there are ears between the vertices, we can remove these and the sum over matchings using $wt_T$ will be unchanged. From the proof of Equation \ref{eq:TradWeightDiffEar}, we know the same is true for edges weakly between $v_i$ and $v_j$. Thus, it suffices to consider the case where there does not exist an edge $(v_\ell,v_m) \in \mathcal{D}$ with $i \leq \ell < m \leq j$.

  In this case, for any subgon $\mathcal{P}$ with vertices strictly between $v_i$ and $v_j$, a matching can choose $\mathcal{P}$ at most $\vert \mathcal{P} \vert - 2$ times. For a concrete example, one could form a matching $\poly{i+1},\ldots,\poly{j-1}$ by setting $\poly{k}$ as the subgon which has edge $(v_k, v_{k+1})$ for all $i+1 \leq k \leq j-1$. This is the matching which uses the counterclockwise-most subgon incident to each vertex. 

\end{proof}

We consider the positivity of the other frieze patterns of Type $\Lambda_{p_1,\ldots,p_s}$ - either those realizable by a quotient dissection or unrealizable frieze patterns. The following results determine a larger class of frieze patterns which are guaranteed to be positive.

\begin{proposition}\label{prop:GreaterThan2}
Let $\mathcal{F}$ be a frieze pattern where all entries in the quiddity sequence are at least 2. Then, all entries of $\mathcal{F}$ are positive.
\end{proposition}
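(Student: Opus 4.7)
The plan is to strengthen the conclusion to a monotonicity statement and prove it by induction on the row index $k = j - i$. Concretely, the goal is to show that whenever $j \geq i+1$,
\[
m_{i,j} \geq m_{i,j-1} \qquad \text{and} \qquad m_{i,j} \geq m_{i+1,j},
\]
so that each entry dominates its two parents on the SE and SW diagonals. Iterating the second inequality down a SW diagonal then yields $m_{i,j} \geq m_{j-2,j} \geq 2$ whenever $j \geq i+2$, while the case $j = i+1$ is the boundary row of $1$'s, so positivity follows at once.

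For the base cases the inequalities are immediate from $m_{i,i} = 0$, $m_{i,i+1} = 1$, and the standing hypothesis $m_{i-1,i+1} \geq 2$. For the inductive step, given $m_{i,j}$ with $j - i \geq 3$, I would apply the two recurrences from Lemma \ref{lem:DivisionFreeFrieze}:
\[
m_{i,j} = m_{i,i+2}\, m_{i+1,j} - m_{i+2,j} = m_{j-2,j}\, m_{i,j-1} - m_{i,j-2}.
\]
Using $m_{i,i+2} \geq 2$, the first of these rearranges as
\[
m_{i,j} \geq 2\, m_{i+1,j} - m_{i+2,j} = m_{i+1,j} + \bigl(m_{i+1,j} - m_{i+2,j}\bigr) \geq m_{i+1,j},
\]
where the last inequality invokes the inductive hypothesis $m_{i+1,j} \geq m_{i+2,j}$ at row $k-1$. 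The dual manipulation of the second recurrence, using $m_{j-2,j} \geq 2$, gives $m_{i,j} \geq m_{i,j-1}$, so the inductive step is complete.

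There is essentially no obstacle here: the only subtle point is choosing the right strengthening. Plain positivity is not inductive on its own, but the twin monotonicity inequalities above are, and the identity $2x - y = x + (x - y)$ makes them feed back into the recurrence transparently.
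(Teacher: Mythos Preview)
Your argument is correct and self-contained. The twin monotonicity inequalities are exactly the right strengthening, and the induction goes through cleanly; the only implicit step is that $m_{i+1,j}>0$ is needed before you can pass from $m_{i,i+2}\,m_{i+1,j}$ to $2\,m_{i+1,j}$, but this follows from the inductive hypothesis itself (iterate $m_{i+1,j}\geq m_{i+2,j}\geq\cdots\geq m_{j-2,j}\geq 2$).

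The paper takes a different, less explicit route: it observes that the $1$-periodic frieze with quiddity $(\ldots,2,2,\ldots)$ has all positive entries (indeed $m_{i,j}=j-i$), and then invokes Theorem~2.1 of Baur--Parsons--Tschabold, which says that adding a nonnegative quantity $b$ to any quiddity entry can only increase the entries of the resulting frieze. So the paper's proof is a comparison argument between two friezes, outsourced to an external monotonicity-in-the-quiddity lemma, whereas yours is an internal monotonicity-along-diagonals argument carried out from scratch. Your approach has the advantage of being completely elementary and independent of \cite{1}; the paper's approach is shorter on the page but relies on a cited result whose proof is of comparable length to what you wrote.
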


\begin{proof}
Baur, Parsons, and Tschabold proved this in the case of positive integers in \cite{1}. Their proof can be modified for frieze patterns with more general entries. In particular, we can combine the fact that the 1-periodic frieze pattern determined by quiddity row $\ldots, 2,2, \ldots$ has all positive entries with Theorem 2.1 of \cite{1} applied to $b \in \mathbb{R}_{\geq 0}$.
\end{proof}

\begin{corollary}
Any sequence $\{m_{i-1,i+1}\}$ such that $m_{i-1,i+1} = \sum_{p \in A_i}\lambda_p$ and $\vert A_i \vert > 1$ for all $i$ determines a frieze pattern with all positive entries.
\end{corollary}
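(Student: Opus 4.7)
The plan is to reduce the corollary directly to Proposition \ref{prop:GreaterThan2}. That proposition asserts every frieze pattern whose quiddity entries are all at least $2$ has strictly positive entries throughout, so it suffices to verify that the hypothesis $|A_i| > 1$ for all $i$ forces each quiddity entry $m_{i-1,i+1} = \sum_{p \in A_i}\lambda_p$ to be at least $2$.

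The first step is the elementary observation that $\lambda_p \geq 1$ for every $p \in \mathbb{Z}_{\geq 3}$. Indeed, $\lambda_p = 2\cos(\pi/p)$ is an increasing function of $p$ on $\mathbb{Z}_{\geq 3}$ because $\pi/p$ is decreasing and $\cos$ is decreasing on $(0,\pi/2)$; consequently $\lambda_p \geq \lambda_3 = 2\cos(\pi/3) = 1$. (This can also be read off directly from the table of values of $\lambda_p$ given after the definition of Type $\Lambda_p$.)

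Now fix any index $i$. Since $A_i$ is a multiset of integers at least $3$ with $|A_i| \geq 2$, the previous step gives
\[
m_{i-1,i+1} \;=\; \sum_{p \in A_i}\lambda_p \;\geq\; \sum_{p \in A_i} 1 \;=\; |A_i| \;\geq\; 2.
\]
Thus the quiddity sequence of the frieze pattern consists entirely of real numbers that are at least $2$, and Proposition \ref{prop:GreaterThan2} immediately implies that every entry of the frieze pattern is positive. No step here is a real obstacle; the only content is the uniform lower bound $\lambda_p \geq 1$, which lets $|A_i| \geq 2$ translate into the quantitative bound needed to invoke the proposition.
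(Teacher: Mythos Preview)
Your proof is correct and follows exactly the same approach as the paper: both use the bound $\lambda_p \geq 1$ for $p \geq 3$ to conclude that $m_{i-1,i+1} \geq |A_i| \geq 2$, and then invoke Proposition \ref{prop:GreaterThan2}. Your write-up is simply more detailed than the paper's one-line justification.
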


\begin{proof}
Recall that $\lambda_p = 2\cos(\frac{\pi}{p})$, which implies that $\lambda_p \geq 1$, with equality when $p = 3$.
\end{proof}

Note that this immediately implies there are unrealizable frieze patterns with all positive entries. One example is the frieze pattern with quiddity cycle $(2, 2 \sqrt{2})$. There are also unrealizable frieze patterns with negative entries; we provide an example below.

$\begin{array}{cccccccccccccccccccc}
  0&&0&&0&&0&&0\\
 &1& &1&&1&&1&\\
 1&&\sqrt{2}&&1&&\sqrt{2}&&1 \\
&\sqrt{2}-1& &\sqrt{2}-1&&\sqrt{2}-1&&\sqrt{2}-1& \\
  2 - 2\sqrt{2}&& \sqrt{2} - 2&&2 - 2\sqrt{2}&&\sqrt{2} - 2&&2 - 2\sqrt{2}\\
  &&&&\vdots&&&&&& \\
 \end{array}$

Next, we take advantage of the Progression Formulas in \cite{13} to provide another way to check for positivity of an arbitrary frieze pattern.

\begin{proposition}\label{prop:PositivityCheck}
Let $\mathcal{F} = \{m_{i,j}\}$ be an $n$-periodic, infinite frieze pattern. Suppose that the first $n$ nontrivial rows are positive; that is $m_{i,j} > 0$ whenever $\vert j - i\vert \leq n+1$. Moreover, suppose that the first growth coefficient, $s_1 = m_{0,n+1} - m_{1,n}$, satisfies $s_1 \geq 2$. Then, all entries of $\mathcal{F}$ are positive.
\end{proposition}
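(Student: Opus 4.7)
The plan is to combine two ingredients: the Chebyshev-type recurrence satisfied by the growth coefficients (Theorem \ref{thm:GrowthCoefficientRecurrence}) and the progression formulas of \cite{13}, which express entries in rows beyond the first $n+1$ as linear combinations of entries in the first $n$ nontrivial rows with coefficients built from the growth coefficients $\{s_k\}$.

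First, I would establish that all growth coefficients satisfy $s_k \geq 2$ and that the sequence $\{s_k\}_{k \geq 0}$ is non-decreasing. We have $s_0 = 2$ and $s_1 \geq 2$ by hypothesis. Proceeding by induction, if $s_{k-1} \leq s_k$ and $s_{k-1} \geq 2$, then the recurrence $s_{k+1} = s_1 s_k - s_{k-1}$ yields
$$s_{k+1} - s_k \;=\; (s_1 - 1) s_k - s_{k-1} \;\geq\; s_k - s_{k-1} \;\geq\; 0,$$
since $s_1 - 1 \geq 1$ and $s_k \geq s_{k-1}$. Hence $s_{k+1} \geq s_k \geq 2$, completing the induction.

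Second, I would invoke the progression formulas of \cite{13}. For each residue $r \in \{0, 1, \ldots, n-1\}$ and each $i \in \mathbb{Z}$, the subsequence $a_k := m_{i, i + r + kn}$ obeys a second-order linear recurrence of the form $a_{k+1} = s_1 a_k - a_{k-1}$, equivalent (by Cayley--Hamilton applied to the monodromy matrix of the quiddity recurrence) to a closed form expressing $a_k$ as a $\mathbb{Z}[s_1]$-combination of the initial values $a_0, a_1$. These initial values are entries $m_{i, i+r}$ (in row $r \leq n-1$) and $m_{i, i+r+n}$ (in row $r+n \leq 2n-1$). For $r \leq 1$, both initial values lie within the first $n+1$ nontrivial rows (or are entries in the trivial rows of $0$'s and $1$'s) and are therefore either positive or nonnegative by hypothesis. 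For $r \geq 2$, the second initial value $a_1 = m_{i, i+r+n}$ lies in rows $n+2, \ldots, 2n-1$, whose positivity requires an intermediate step; here one computes these rows directly from the unimodular (Ptolemy) relations and the first $n+1$ rows, reducing the problem to row $n+2$ via the identity
$$m_{i-1, i+n+1}\, m_{i, i+n+2} \;=\; 1 + m_{i, i+n+1}\, m_{i+1, i+n+2},$$
whose positivity is controlled by $s_1 \geq 2$ and the positivity in row $n+1$.

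Finally, once positivity of $a_0$ and $a_1$ is in hand for every residue $r$, positivity and monotonicity of every $a_k$ follow by the same induction used for the growth coefficients: if $0 < a_{k-1} \leq a_k$, then
$$a_{k+1} - a_k \;=\; (s_1 - 1) a_k - a_{k-1} \;\geq\; a_k - a_{k-1} \;\geq\; 0,$$
so $a_{k+1} \geq a_k > 0$. Since every entry $m_{i,j}$ with $j - i \geq 2$ belongs to one of these subsequences, all entries of $\F$ are positive. The main obstacle is the passage from the first $n+1$ rows to the next block of rows, namely supplying the positive seed $a_1$ at each residue $r \geq 2$; this is the step where the precise statement of the progression formulas of \cite{13} is needed, and where one must be most careful with the reindexing between row coordinates and residue classes modulo $n$.
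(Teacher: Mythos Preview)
Your overall strategy matches the paper's: both arguments combine the fact that $s_k \geq 2$ for all $k$ (which you prove directly and the paper cites from \cite{2}) with the progression formulas of \cite{13}. The difference is in \emph{which} progression formula is used, and this difference is exactly what creates the obstacle you flag at the end.

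You work with the homogeneous three--term recurrence $a_{k+1} = s_1 a_k - a_{k-1}$ along residue classes. Because this recurrence contains a minus sign, you need both seeds $a_0, a_1$ to be positive \emph{and} you need $a_1 \geq a_0$ to start the monotonicity induction. As you correctly observe, for residues $r \geq 2$ the seed $a_1 = m_{i,i+r+n}$ lies in rows $n+2,\ldots,2n-1$, outside the hypothesis. Your proposed patch via the diamond relation does not close this gap: the identity you write relates a \emph{product} of two row--$(n+2)$ entries to known quantities, which does not yield positivity of each factor, and the alternative form $m_{\text{bottom}} = (m_{\text{left}} m_{\text{right}} - 1)/m_{\text{top}}$ requires $m_{\text{left}} m_{\text{right}} > 1$, which is not available from the hypotheses. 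Nor is the monotonicity seed $a_1 \geq a_0$ obvious a priori.

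The paper avoids this entirely by quoting the \emph{additive} form of the progression formulas (Theorem~5.4 of \cite{13}): for $1 \leq i < j \leq n$ one has an identity of the shape
\[
m_{i,\,j+(k-1)n} \;=\; s_{k-1}\,m_{i,j} \;+\; m_{j,\,i+(k-1)n},
\]
and similarly for $i \geq j$. The right--hand side is a sum with positive coefficient $s_{k-1} \geq 2$ of entries lying in strictly higher (earlier) rows, so a single downward induction on the row index gives positivity of every entry directly, with no seeding issue and no monotonicity argument. If you want to repair your approach, the cleanest route is to replace the subtractive recurrence with this additive identity; alternatively, you can use the additive identity once (case $k=2$) to supply the missing seeds $a_1$ for $r \geq 2$ and then run your Chebyshev induction from there.
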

\begin{proof}
First, note by Lemma 4.3 of \cite{2} that if $s_1 \geq 2$, then $s_k \geq 2$ where $s_k$ is the $k$-th growth coefficient. 

Next, we recall a special case of Theorem 5.4 of \cite{13}, known as the Progression Formulas. Let $1 \leq i,j \leq n$ and $k \in Z_{\geq 1}$. Moreover, first assume $i < j$. Then,

\[
m_{i,j+(k-1)n} = s_{k-1}m_{i,j} + m_{j, i+k(n-1)}
\]

Similarly, suppose $i \geq j$. Then, \[
m_{i,j+kn} = s_{k-1}m_{i,j+n} + m_{j, i + kn}
\]

We see we can write entries which are below the $n$-th row in terms of entries which are in higher rows and growth coefficients. Thus, given that the first $n$ rows are positive, we can conclude by induction that all entries in $\mathcal{F}$ are positive. 

\end{proof}

Note that in the example of an unrealizable frieze pattern with negative entries, the first growth coefficient is $\sqrt{2}$, which is positive but not larger than 2. 

We can use Proposition \ref{prop:PositivityCheck} to check for positivity of frieze patterns. Based on experiments, we conclude with the following conjecture.

\begin{conjecture}\label{conj:Positive}
The only frieze patterns of Type $\Lambda_{p_1,\ldots,p_s}$ with negative entries are those which fail the realizability test ( Proposition \ref{proposition:unrealizability1}). In particular, all quotient dissections yield frieze patterns with positive entries. 
\end{conjecture}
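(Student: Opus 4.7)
The plan is to reduce the conjecture to the case of skeletal frieze patterns realized only by quotient dissections, then to show positivity of those. By Theorem~\ref{thm:QuotientDissections}, any skeletal frieze pattern passing the realizability test comes from a dissection or a quotient dissection; ordinary dissections already give positive frieze patterns by Corollary~\ref{cor:TradWeightPositive} together with Holm--J{\o}rgensen's polygon result. After showing that cutting reduces the general (non-skeletal) case to the skeletal one without losing positivity, the content of the conjecture becomes the statement that every quotient dissection of $A_{n,m}$ produces a frieze pattern with all positive entries.

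For the cutting reduction, I would use the matching formula in Theorem~\ref{thm:LocalWeight}. Cutting at an interval $[i,i+p-3]$ corresponds to removing a size-$p$ ear from the realization (Lemma~\ref{lem:CutRealizable} and Lemma~\ref{lem:DumplingGlue}). Paralleling the bookkeeping in the proof of Theorem~\ref{thm:EqualWeights}, I would show that each entry $m_{i,j}$ of the uncut frieze $\F$ decomposes as a nonnegative $\mathbb{Z}[\lambda_{p_1},\ldots,\lambda_{p_s}]$-combination of entries of the cut frieze $\F'$, so that positivity of $\F'$ forces positivity of $\F$.

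For quotient dissections, I would apply Proposition~\ref{prop:PositivityCheck}, which reduces positivity to two facts: positivity of the first $n$ nontrivial rows, and $s_1 \geq 2$. For the growth coefficient, I would build a quotient-compatible version of the annulus weighting $wt_A$ by treating each identified pair as a single subgon and counting occurrences across all lifts in $\overline{I}$. Mimicking the proof of Proposition~\ref{prop:GrCo2}, the matchings $w_{CW}$ and $w_{CCW}$ (choosing the clockwise-most and counterclockwise-most subgon at each outer vertex) would both have strictly positive weight under this quotient annulus weighting, and since we disallow identifying subgons sharing an edge or having all vertices on one boundary, they remain distinct, yielding $s_1 \geq 2$. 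For the first $n$ rows, I would establish an analogue of Corollary~\ref{cor:TradWeightPositive} via a nonnegative ``quotient traditional'' weighting $wt_T^{\overline{\D}}$: group matchings in $\pathset{i}{j}$ by their image under the quotient map and argue that the sum of $wt_L$ over each group equals a product of $U_k(\lambda_p)$-factors indexed by equivalence classes of subgons, hence nonnegative.

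The hardest step will be establishing this quotient traditional weighting and verifying it sums to $m_{i,j}$. The paper explicitly flags this as open in the remark following Definition~\ref{def:TradWeight}, and the core difficulty is that a matching in $\overline{I}$ using an identified pair twice need not correspond to using a single underlying subgon twice in the pre-quotient dissection, making the bookkeeping of $U_k$-factors under identification delicate. If this direct approach fails, a fallback is an inductive argument using the diamond relation to establish positivity of rows $3$ through $n$ directly, then invoking Proposition~\ref{prop:PositivityCheck} together with the growth-coefficient bound above to extend positivity to the full frieze.
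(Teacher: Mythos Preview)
This statement is a \emph{conjecture} in the paper, not a theorem: the authors present it as open, based on experiments, and explicitly note in the remark after Definition~\ref{def:TradWeight} that they ``do not yet know a consistent way to weight matchings in a quotient dissection'' which would verify positivity. There is therefore no proof in the paper to compare your proposal against; what you have written is an outline for attacking an open problem, and you yourself correctly flag that its crux coincides with what the paper leaves unresolved.

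Your reduction step has a genuine gap. You propose to cut a non-skeletal $\F$ passing the realizability test down to a skeletal $\F'$, then invoke Theorem~\ref{thm:QuotientDissections}. But cutting does not preserve passing the realizability test: the paper's own example $(1+\sqrt{2},1,2,1+\sqrt{2})$ passes Proposition~\ref{proposition:unrealizability1}, yet cutting at $[2,2]$ yields $(\sqrt{2},1,1+\sqrt{2})$, which fails condition~1. Such frieze patterns lie in the scope of the conjecture but terminate in step~1 of the Realizability Algorithm, so they are not realizable by any dissection or quotient dissection. Your claim that each $m_{i,j}$ is a nonnegative combination of entries of $\F'$ is then useless, since $\F'$ itself may have negative entries. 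Thus even granting everything you sketch for quotient dissections, the full conjecture would not follow.

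Restricting to the ``in particular'' clause about quotient dissections, your plan still rests on two unestablished ingredients: a quotient analogue of Theorem~\ref{thm:CombinatorialGrowthC} to justify the $s_1 \geq 2$ argument, and a nonnegative quotient traditional weighting summing to $m_{i,j}$. The fallback you mention---propagating positivity through rows $3$ to $n$ via the diamond relation---is not an argument as stated, since the diamond relation involves division and positivity of earlier rows does not by itself force positivity of later ones.
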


\section{Acknowledgements}
This research was largely conducted at the 2020 University of Minnesota Twin Cities REU, supported by NSF RTG grant DMS-1745638. We thank Gregg Musiker for feedback throughout the process and Libby Farrell for help with the manuscript and support during the REU.



\begin{thebibliography}{99}
\bibitem{1} K. Baur, M. Parsons, and M. Tschabold. ``Infinite Friezes''. In: \textit{European Journal of Combinatorics} 54 (2016).
\bibitem{2}  K. Baur, K. Fellner, M. Parsons, and M. Tschabold. ``Growth behaviour of periodic tame friezes''. In: \textit{Revista Matematica Iberoamericana} (2018).
\bibitem{3} K. Baur, I. \c{C}anak\c{c}i,  K. Jacobsen, K., M. Kulkarni, and G. Todorov. ``Infinite friezes and triangulations of annuli''. Preprint \textit{arxiv 2007.09411} (2020).
\bibitem{4} C. Bessenrodt. ``Conway-Coxeter friezes and beyond: Polynomially weighted walks around dissected polygons and generalized frieze patterns''. In: \textit{Journal of Algebra} 442 (2015), pp. 80-103. 
\bibitem{5} D. Broline, D.W. Crowe, and I.M. Isaacs. ``The geometry of frieze patterns''. \textit{Geometriae Dedicata} 3 (1974), pp. 171-176. 
\bibitem{6} P. Caldero and F. Chapoton. ``Cluster algebras as Hall Algebras of quiver representations''. \textit{Comment. Math. Helv.} 81 (2006), pp. 595-616. 
\bibitem{7} P. Caldero and M. Reineke. ``On the quiver Grassmannian in the acyclic case''. In: \textit{Journal of Pure and Applied Algebra} 212.11 (2008), pp. 2369-2380. 
\bibitem{8} I. \c{C}anak\c{c}i and P. J{\o}rgensen. ``Friezes, Weak Friezes, and T-Paths''. Preprint \textit{arxiv 2005.06230} (2020).
\bibitem{9} G.D. Carroll and G. Price. ``Two New Combinatorial Models for the Ptolemy Relation''. \textit{unpublished memo} (2003). 
\bibitem{10} J.H. Conway and H.S.M. Coxeter. ``Triangulated polygons and frieze patterns''. \textit{The Mathematical Gazette} 400 (1973), pp. 87-94. 
\bibitem{11} H.S.M. Coxeter. ``Frieze Patterns''. \textit{Acta. Arithm.} 18 (1971), pp. 297-310. 
\bibitem{12} S. Fomin and A. Zelevinsky. ``Cluster Algebras I: Foundation''. \textit{Journal of the American Mathematical Society} 15.2 (2002), pp. 497-529.
\bibitem{13} E. Gunawan, G. Musiker, and H. Vogel. ``Cluster algebraic interpretation of infinite friezes''. \textit{European Journal of Combinatorics} 81 (2019), pp. 22-57.
\bibitem{14} T. Holm and P. J{\o}rgensen. ``A $p$-angulated generalisation of Conway and Coexeter's theorem on frieze patterns''. \textit{International Mathematics Research Notices} (2017).
\bibitem{15} W. Lang. ``The field $\mathbb{Q}(2\cos(\pi/n))$, its Galois group and length ratios in the regular n-gon''. Preprint \textit{arxiv 1210.1018} (2012).
\bibitem{16} S. Morier-Genoud. ``Coxeter's frieze patterns at the crossroads of algebra, geometry and combinatorics''. \textit{Bulletin of the London Mathematical Society} 47.6 (2015).
\bibitem{17} J. Propp. ``Lattice structure for orientations of graphs''. Preprint \textit{arxiv 0209.5005} (1993). 
\bibitem{18} R. Schiffler. ``A Cluster expansion formula ($A_n$ case)''. \textit{Electronic Journal of Combinatorics} 15 (2008).
\bibitem{19} R. Schiffler and H. Thomas. ``On cluster algebras arising from unpunctured surfaces''. \textit{International Mathematics Research Notices} 17 (2009) pp. 3160-3189.
\bibitem{20} M. Tschabold. ``Arithmetic infinite friezes from punctured discs''. Preprint \textit{arxiv 1503.04352} (2015).
\end{thebibliography}
\end{document}